\numberwithin{equation}{section}
\newtheorem{theorem}{Theorem}[section]
\newtheorem{definition}[theorem]{Definition}
\newtheorem{lemma}[theorem]{Lemma}
\newtheorem*{notation}{Notations}
\newtheorem{proposition}[theorem]{Proposition}
\numberwithin{equation}{section}
\theoremstyle{remark}
\newtheorem{remark}[theorem]{Remark}
\def\wt{\widetilde}
\def\9{{\infty}}
\def\ve{{\varepsilon}}
\def\na{{\nabla}}
\def\bbr{{\mathbb{R}}}
\def\({\left(}
\def\){\right)}
\def\<{\left<}
\def\>{\right>}
\def\ol{\overline}
\renewcommand{\epsilon}{\varepsilon}
\newcommand{\X}{{\mathbb{X}}}
\begin{document}
	
	\title[multi-solitons to
	focusing mass-supercritical SNLS]{multi-solitons to
		focusing mass-supercritical stochastic nonlinear Schr\"odinger equations}

	\author{Michael R\"ockner}
	\address{Fakult\"at f\"ur Mathematik,
		Universit\"at Bielefeld, D-33501 Bielefeld, Germany, 
		Academy of Mathematics and Systems Science, 
		Chinese Academy of Sciences, 
		Beijing, China.}
	\email{roeckner@math.uni-bielefeld.de}
	\thanks{}

	\author{Yiming Su}
	\address{School of Mathematics,
		Hangzhou Normal University, 311121 Hangzhou, China.}
	\email[Yiming Su]{yimingsu@amss.ac.cn}
	\thanks{}

	\author{Yanjun Sun}
	\address{School of Mathematical Sciences, Shanghai Jiao Tong University, China.}
	\email[Yanjun Sun]{0830-syj@sjtu.edu.cn}
	\thanks{}

	\author{Deng Zhang}
	\address{School of Mathematical Sciences, MOE-LSC,
		CMA-Shanghai, Shanghai Jiao Tong University, China.}
	\email[Deng Zhang]{dzhang@sjtu.edu.cn}
	\thanks{}

	\keywords{Multi-solitons, mass-supercriticality, 
    stochastic nonlinear Schr\"odinger equations,  controlled rough path}
	
	\subjclass[2020]{60H15, 35C08, 35Q51, 35Q55.}
	
	\begin{abstract}
		We consider the stochastic nonlinear Schr\"odinger equation driven by linear multiplicative noise in the mass-supercritical case. 
		Given arbitrary $K$ solitary waves
		with distinct speeds,
		we construct stochastic multi-solitons 
        pathwisely in the sense of controlled rough path, 
		which  behave asymptotically as the sum of the $K$ prescribed solitons as
		time tends to infinity. 
        In contrast to the mass-(sub)critical case in \cite{RSZ23}, 
        the linearized Schr\"odinger operator around the ground state has more unstable directions in the supercritical case. 
        Our pathwise construction utilizes the rescaling approach
and the modulation method in \cite{CMM11}.  
We derive the  quantitative decay rates
dictated by the noise 
for 
the unstable directions, 
as well as the modulation parameters and remainder 
in the geometrical decomposition.  
They are important to close the key  bootstrap estimates  
and to implement topological arguments to control the unstable directions. 
As a result, 
the temporal convergence rate of stochastic multi-solitons, 
which can be of either exponential or polynomial type,  
is related closely to the spatial decay rate of the noise 
and reflects the noise impact on soliton dynamics.
	\end{abstract}
	
	\maketitle
	
	\tableofcontents

	
	\section{Introduction and formulation of main results}
	\subsection{Introduction} 	
	We consider the focusing stochastic nonlinear Schr\"odinger equations (SNLS for short) with linear multiplicative noise:
	\begin{equation}  \label{SNLS}
		\begin{cases}dX(t)=\text{$i$}\Delta{X(t)}dt+\text{$i$}|X(t)|^{p-1}X(t)dt-\mu(t)X(t)dt+\sum\limits_{k=1}^{N}X(t)G_k(t)dB_k(t),
			\\X(T_0)=X_0\in{H^1(\mathbb{R}^d)}.  \tag{SNLS}
		\end{cases}
	\end{equation}
	Here, $p\in(1+\frac{4}{d},1+\frac{4}{(d-2)_+})$, where
	$\frac{4}{(d-2)_+} = +\infty$ if $d=1,2$, or
	$\frac{4}{d-2}$ if $d\geq 3$,
	that is, the nonlinearity lies in the mass-supercritical regime.
	Note that $p=1+\frac 4d$ or $1+\frac{4}{d-2}$
	correspond to the mass-critical or energy-critical case, respectively.
	Moreover,
	$B_k$, $1\leq k\leq N$, are standard $N$-dimensional real-valued Brownian motions on a stochastic basis $(\Omega,\mathscr{F},\{\mathscr{F}_t\},\mathbb{P})$ with normal filtration $\{\mathscr{F}_t\}$,
	and $G_k(t,x)=i\phi_k(x)g_k(t)$,
	$x\in\mathbb{R}^d$, $t>0$, where $\{\phi_k\}\subseteq C_b^{\infty}(\mathbb{R}^d,\mathbb{R})$,  
    $\{g_k\}\subseteq C^{\alpha}(\mathbb{R}^+,\mathbb{R})$ with $\alpha\in(1/3, 1/2)$ 
    are controlled rough path with respect 
    to $\{B_k\}$, 
    and $\{g_k\}$ and their Gubinelli's derivative are     
    $\{\mathscr{F}_t\}$-adapted. 
	The last term $X(t)G_k(t)dB_k(t)$ is taken in the sense of controlled rough paths (see Definition \ref{def1.2} below),
	and the term $\mu$ is the Stratonovich correction term,
    which is of the form
	\begin{equation}
		\mu(t,x)=\frac{1}{2}\sum\limits_{k=1}^{N}\phi^2_k(x)g^2_k(t),~~x\in\mathbb{R}^d,~t>0,
		\label{1.2}
	\end{equation}
	to ensure the conservation law of mass as required in the physical context (\cite{BCIRG94,BCIRG95}).
	We note that
	the stochastic term $XG_kdB_k(t)$ can be viewed as a random potential acting on the quantum system. It  coincides with the It\^o integral 
	when  $\{X(t)\}$ is $\{\mathscr{F}_t\}$-adapted.
	In the special case where the noise is absent, i.e.,
	$B_k \equiv 0$, $1\leq k\leq N$,
	\eqref{SNLS} reduces to the classical nonlinear Schr\"odinger equation (NLS for short)
	\begin{equation}  \label{NLS}
		\begin{cases}
			i\partial_tu+\Delta u+|u|^{p-1}u=0,\\
			u(T_0)=u_0 \in H^1(\mathbb{R}^d).   \tag{NLS}
		\end{cases}
	\end{equation}

	The physical significance of SNLS is well-known. The 3D cubic NLS, which is a typical mass-supercritical model, 
     is of physical importance 
     in nonlinear optics 
     and 
	describes paraxial propagation of laser beams in a homogeneous Kerr medium, see \cite{Fibich}.
	In a crystal
	the noise corresponds to scattering of excitons by phonons,
	and the noise effect on the coherence of the ground state solitary waves
	was investigated in \cite{BCIRG95}.
	It also arises from the physical model
	of monolayer Scheibe aggregates \cite{BCIRG94}.
	Moreover, the noise effect on its collapse process was studied in \cite{RGBC95}.
	We also refer to  \cite{DD02,DD022}
	for numerical observations of noise effects
	on blow-up,
	and \cite{MRY21,MRRY21} where stochastic stable blow-up
	solutions have been investigated.

	Local well-posedness of \eqref{SNLS} and \eqref{NLS} is  known in the energy space $H^1$,
	see,  e.g.,  \cite{C03,dD03,BRZ16}.
	The main interest of this paper is to study the large-time
	dynamics, especially, the soliton dynamics
	of \eqref{SNLS} in the mass-supercritical case.
	
	According to the celebrated soliton resolution conjecture, 
	generic global solutions to NLS are expected to behave asymptotically as a superposition of solitons plus a dispersive decaying profile.
	In the last decades,
	significant progress
	has been achieved on the
	soliton resolution conjecture for energy-critical nonlinear wave equations,
	see \cite{DKM23, JL23} and references therein.
	Multi-solitons to NLS
	were constructed initially in the mass-critical case \cite{M90}.
	Afterwards,
	multi-solitons have been constructed in various settings,
	including the mass-subcritical case \cite{MM06},
	the mass-supercritical case \cite{CMM11, C14, N19},
	and the energy-critical case \cite{J17}.
	Non-pure multi-solitons
	(including their scattering profile),
	predicted by the soliton resolution conjecture,
	have been recently constructed in \cite{RSZ24},
	and uniqueness was proved in the solution class with $t^{-5-}$ decay rate.

	It should be mentioned that
	soliton dynamics in the mass-supercritical setting is much more complicated
	than in the (sub)critical case.
	One major obstruction is
	that
	the linearized Schr\"odinger operator in the supercritical case has more unstable directions
	than in the (sub)critical case. In fact, the eigenvalue of the linearized operator around the soliton $e^{it}Q$ in the (sub)critical case is exactly zero, 
    while in the supercritical case there exist two additional nonzero  real eigenvalues (see \cite{G90, S06, W85}).  
	As a result,
	the orthogonal conditions
	in the geometrical decomposition are
	insufficient to control all unstable directions.
	Moreover,
	in \cite{DR10}, Duyckaerts and Roudenko constructed global solutions $U(t)$ to the 3D focusing cubic NLS,
	such that $\lim_{t\rightarrow+\infty}\|U(t)-e^{it}Q\|_{H^1}=0$ whereas $U(t)\neq e^{it}Q$. However,
	in the (sub)critical case,
	no such special solutions $U(t)$ can exist, due to the
	variational property of the ground state Q and the corresponding linearized operator. In this spirit, a family of multi-solitons have been constructed 
    by Combet \cite{C14} with the same asymptotic behavior. This is in  contrast to the (sub)critical case, where multi-solitons are believed to be asymptotically unique, 
    see the conjecture raised by Martel \cite{M18}. Very recently, in the
	(sub)critical setting, the uniqueness of multi-solitons with polynomial asymptotic rate
	has been obtained by C\^ote and Friederich \cite{CF21},
	Cao, Su and Zhang \cite{CSZ23}.

	In the stochastic case, 
    more difficulties occur in 
	the study of large-time dynamics
	of SNLS.
	As a matter of fact,
	the presence of noise even destroys the basic conservation law of the energy.
	The energy of solutions to SNLS indeed evolves as a continuous semimartingale.
	Its evolution was carefully studied
	by numerical method in \cite{MRY21, MRRY21}.
	Furthermore,
	because solitons
	are unstable with respect to $H^1$ perturbations,
	it is a priori unclear whether the input of noise
	destroys the soliton dynamics.
	This is very different from the
	scattering dynamics in \cite{HRZ19, Z23},
	which is stable under $H^1$ perturbations.

	In \cite{dD02, dD05},
	it was first proved by de Bouard and Debussche that non-degenerate noise
	in the supercritical case
	can accelerate blow-up
	with positive probability.
	Afterwards,
	the small noise large deviation principle
	and the error in soliton transmission
	have been studied
	in \cite{DG08}. 
    Moreover, 
    for the 2-D Gross-Pitaevskii equation perturbed by a random quadratic potential, 
    it was proved in \cite{dF09} 
    that the solution 
    with initial condition of a standing wave 
    decomposes into the sum of a randomly modulated standing wave and a small remainder, 
    and the first order of the remainder converges to a Gaussian process. 
	See also \cite{dD07,dD09}
	for the soliton dynamics of stochastic Korteweg-de Vries equations.
	Recently,
	the quantitative construction
	of blow-up solutions
	have been obtained for the mass-critical  SNLS.
	We refer to   \cite{SZ23} for
	critical mass blow-up solutions, \cite{FSZ22} for
	stochastic log-log blow-up solutions,
	and \cite{RSZ24, SZ20}
	for multi-bubble
	(Bourgain-Wang type) blow-up solutions.
	
	In addition to the lack of energy conservation,
	the pseudo-conformal symmetry of mass-critical NLS is also destroyed
	by the noise. 
	As a result,
    unlike in the deterministic case, 
    stochastic multi-solitons cannot be directly derived from the aforementioned stochastic blow-up solutions. 
    This fact forces to
	construct stochastic multi-solitons
	on the soliton level.
	Recently,
	stochastic multi-solitons to mass-(sub)critical SNLS, i.e., \eqref{SNLS} with $1<p\leq 1+ 4/d$,
	have been constructed in \cite{RSZ23}.
	The construction of stochastic multi-solitons in the mass-supercritical case, 
    however, 
	remains open.

	The aim of the present work is to address this problem for  the mass-supercritical \eqref{SNLS}.
	More precisely,
	we construct stochastic multi-solitons to \eqref{SNLS} 
    in a pathwise way in the sense of controlled rough path. 
    The constructed stochastic solutions 
    behave asymptotically like a sum of solitary waves with distinct speeds,
	see Theorem \ref{th1.3} below. 
    This provides new examples for the soliton resolution conjecture in the stochastic case. 
    Our proof reveals that, 
    though solitons are unstable under $H^1$ perturbation of initial data, 
	the construction of multi-solitons
	in some sense has structural stability, 
    that is, 
    it is stable under perturbation
	of the NLS by first and zero order terms caused by the noise (see \eqref{RNLS} below).
	We construct
	the stochastic multi-solitons
	in  two scenarios of noise,
	which correspond to the exponential and polynomial spatial decay rates of noise, respectively. 
Quantitative decay rates of unstable directions, 
and modulation parameters and the remainder in the geometrical decomposition are derived. 
Interestingly,
	the temporal decay rate
	of stochastic multi-solitons
	is dictated by the spatial decay rate of the noise,
	which reveals the noise effect on soliton dynamics.

	\subsection{Main results}
	
	Before formulating the main results, 
    let us first recall 
	some basic notions
	in the theory of controlled rough paths from  \cite{FH14,G04}.  

    Fix $\alpha\in (1/3,1/2)$.  
    For $I=[S,T]\subseteq \mathbb{R}^+$, 
    let $\mathscr{C}^{\alpha}(I,\mathbb{R}^N)$ denote the space of $\alpha$-H\"older rough paths 
    $(X,\mathbb{X})$,  
 such that 
     $X\in C^{\alpha}(I; \mathbb{R}^N)$, 
      $\mathbb{X}\in C^{2\alpha}(I^2; \mathbb{R}^{N\times N})$, 
 and the Chen relation holds 
	\begin{equation*}
		\mathbb{X}(s,t) - \mathbb{X}(s,u) -\mathbb{X}(u,t)
        = \delta X_{su}\delta X_{ut} 
	\end{equation*}
    for all $S\le s\le u\le t\le T$. 
    For simplicity we write  
    \begin{align*}
     \|X\|_{\alpha,I}:=\sup_{s,t\in I,s\neq t}\frac{|\delta X_{st}|}{|t-s|^\alpha}<\infty, 
     \ \ 
      \|\mathbb{X}\|_{2\alpha,I}
      :=\sup_{s,t\in I,s\neq t}
      \frac{|\mathbb{X}(s,t)|}{|t-s|^{2\alpha}}
      <\infty, 
    \end{align*} 
    where 
    $\delta X_{st}:= X(t)-X(s)$.   
    We also set $\dot g:=\frac{dg}{dt}$ for any $C^1$ functions.

	Given a path $X\in C^{\alpha}([S,T],\mathbb{R}^N)$, 
    $0\leq S<T<\infty$, 
    we recall that a pair $(Y,Y')$ is a controlled rough path with respect to $X$, if $Y\in C^{\alpha}([S,T],\mathbb{R}^N)$, $Y'\in C^{\alpha}([S,T],\mathbb{R}^{N\times N})$, 
    and the remainder term $R^Y$, implicitly given by
	\begin{equation*}
		\delta Y_{k,st}=\sum\limits_{j=1}^{N}Y'_{kj}(s)\delta X_{j,st}+\delta R_{k,st}^Y,
	\end{equation*}
	satisfies $\|R_k^Y\|_{2\alpha, [S,T]}<\infty$, $1\le k\le N$.
	$Y'$ is the so-called Gubinelli derivative of $Y$.
	Let $\mathscr{D}_X^{2\alpha}([S,T],\mathbb{R}^N)$
	denote the space of controlled rough paths with respect to $X$.
	
	For any $\alpha\in(1/3, 1/2)$, 
    it is well-known that the $N$-dimensional Brownian motion $B=(B_j)_{j=1}^N$ can be enhanced to 
    the $\alpha$-H\"older rough path $\mathbf{B}=(B,\mathbb{B})$, where 
    $\mathbb{B}(s,t):=\int_s^t\delta B_{sr} \otimes dB(r) \in \mathbb{R}^N \times \mathbb{R}^N$  
    is taken in the sense of It\^o, 
    $0\leq s<t<\infty$. 
	For any $T\in (0,\infty)$, it holds that 
	$\mathbf{B} \in \mathscr{C}^{\alpha}([0,T],\mathbb{R}^N)$ almost surely,  
	see \cite[Proposition 3.4]{FH14}. 
	
	Given a controlled rough path $(Y,Y')\in\mathscr{D}_B^{2\alpha}([S,T],\mathbb{R}^N)$, 
    one can define the rough integration of $Y$ against $\mathbf{B}=(B,\mathbb{B})$ as follows 
    (see 
    \cite[Theorem 4.10]{FH14}, 
    \cite[Corollary 2]{G04}): 
    For each $1\le k\le N$,
	\begin{equation*}
		\int_S^TY_k(r)dB_k(r)
        :=\lim_{|\mathcal{P}|\to0}\sum\limits_{l=1}^N\Big(Y_k(t_l)\delta B_{k,t_lt_{l+1}}+\sum\limits_{j=1}^NY'_{kj}(t_l)\mathbb{B}_{jk}(t_l,t_{l+1})\Big),
	\end{equation*}
	where $\mathcal{P}:=\{t_l\}_{l=0}^n$ 
    is a partition of $[S,T]$ so that $t_0=S$, $t_n=T$, $|\mathcal{P}|:=\max_{0\le l\le n-1}|t_{l+1}-t_l|$.

	\medskip
	As in \cite{RSZ23},
	we assume that the noise in \eqref{SNLS} satisfies the following conditions:
	
	\begin{enumerate}
		\item[$(A0)$:]
		Asymptotic flatness. For every $1\le k\le N$, $\phi_k\subseteq C_b^{\infty}(\mathbb{R}^d,\mathbb{R})$ such that
		\begin{equation}
			\lim_{|x|\to\infty}|x|^2|\partial_x^\nu\phi_k(x)|=0, \ \nu\neq0.
			\label{1.13}
		\end{equation}
		
		\item[$(A1)$:]
		$\{g_k\}_{k=1}^N$ are $\{\mathscr{F}_t\}$-adapted continuous processes controlled by the Brownian motions $\{B_k\}$ with the Gubinelli derivative $\{g_{kj}'\}_{j,k=1}^N$,
		and $g_k\in L^2(\mathbb{R}^+)$, $1\leq k\leq N$, $\mathbb{P}$-a.s..
		Moreover, one of the following cases holds:
		
		\noindent Case (I): For every $1\leq k\leq N$, there exists $c_k>0$ such that for $|x|>0$, 
		\begin{equation}
			\sum\limits_{|\nu|\le4}|\partial^{\nu}_x\phi_k(x)|\le Ce^{-c_k|x|}.
			\label{1.14}
		\end{equation}
		
		\noindent Case (II): Let $\nu_*\in \mathbb{N}$.  
        For every $1\leq k\leq N$ and $|x|>0$,
		\begin{equation}
			\sum\limits_{|\nu|\le4}|\partial^{\nu}_x\phi_k(x)|\le C|x|^{-\nu_*}.
			\label{1.16}
		\end{equation}
		In addition,
		there exists a  
        random time $\sigma_*$ 
        and a 
        deterministic constant $c^*>0$ such that $\mathbb{P}$-a.s. 
        $\sigma_*\in [0,\infty)$ 
        and 
        for any $t\geq \sigma_*$,  
		\begin{equation}
\int_{t}^{\infty}g_k^2ds\log\left(\int_{t}^{\infty}g_k^2ds\right)^{-1}\le\frac{c^*}{t^2}, \quad 
			1\leq k\leq N. 
			\label{1.15}
		\end{equation}
	\end{enumerate}

	We note that
	{\rm Case (I)} and {\rm Case (II)}
	correspond to the exponential
	and polynomial spatial decay rates of noise, respectively.
	Without loss of generality,
	we consider in Assumptions 
    {\rm (A0)} and {\rm (A1)} 
    \begin{align*}
    \sum\limits_{|\nu|\le4}|\partial^{\nu}_x\phi_k(x)|\le  C 
    \phi(|x|) 
    \end{align*}
    with a decay  function $\phi$
	of the following form
	\begin{equation}
		\phi(|x|):=
		\begin{cases}
			e^{-|x|},&\text{in Case (I)};\\
			|x|^{-\nu_*},&\text{in Case (II)}.
		\end{cases}
		\label{1.17}
	\end{equation}
	As we shall see later,
	these spatial decay rates of noise
	indeed affect the temporal decay rate of
	stochastic solitary waves, see \eqref{1.20}
	and \eqref{1.22} below.
	
	The temporal condition \eqref{1.15}  relates  to the Levy H\"older continuity of Brownian motions,
	which permits to control the tail of the noise  $B_*(t)$
	in \eqref{B*-def} below 
    for $t$ large enough.
	It is worth noting that
	the $t^{-1}$ decay rate of $B_*(t)$ in \eqref{B*-bdd}
	is essential to close the bootstrap estimate of $\|\varepsilon\|_{H^1}$ in Case (II),
	see, e.g.,  \eqref{5.24} below.

	Let us also mention that
	the asymptotic flatness condition $(A0)$ ensures the local well-posedness of \eqref{SNLS},
	see \cite{BRZ16, Z23}.
	The smoothness condition of the spatial functions $\{\phi_k\}$ is assumed merely for simplicity.
	One can also treat infinitely many Brownian motions,
	i.e., $N=\infty$, 
    and noise of low spatial regularity in certain Sobolev and lateral Strichartz spaces,
	as in the context of Zakharov system  \cite{HRSZ24, HRSZ25}.

	\medskip
	The solutions to  \eqref{SNLS} are defined in the following controlled rough path sense. 
    
	\begin{definition}\label{def1.2}
		Let $p\in (1+\frac{4}{d},1+\frac{4}{(d-2)_+})$,
		$d\geq 1$. We say that X is a solution to \eqref{SNLS} on $\left[T_0,\tau^*\right)$,
		where $T_0$, $\tau^*$ $\in$ $\left(0,\infty\right]$ are random variables, 
        if for $\mathbb{P}$-a.e. $\omega\in \Omega$ 
        and for any $\varphi$ $\in$ $C_c^\infty$, 
        $t \mapsto \<X(t, \omega), \varphi\>$ 
      is continuous on 
      $[T_0(\omega),\tau^*(\omega) )$ and for any $T_0(\omega) \le s < t \le \tau^*(\omega)$,
		\begin{equation}
			\langle{X(t)-X(s)},\varphi\rangle-\int_{s}^{t}\langle{\text{$i$}X},\Delta\varphi\rangle+\langle{\text{$i$}|X|^{p-1}X},\varphi\rangle-\langle{\mu{X}},\varphi\rangle{dr}=\sum\limits_{k=1}^{N}\int_{s}^{t}\langle{\text{$i$}\phi_kg_k(r)X(r)},\varphi\rangle{dB_k(r)}.
			\label{1.18}
		\end{equation}
		Here the integral $\int_{s}^{t}\langle{\text{$i$}\phi_kg_kX},\varphi\rangle{dB_k(r)}$ is taken in the sense of controlled rough paths with respect to the 
        Brownian rough path 
        $\{(B, \mathbb{B})\}$, 
        and  $\langle{\text{$i$}\phi_kg_kX},\varphi\rangle 
        \in \mathscr{D}_B^{2\alpha}([s,t],\mathbb{R})$, satisfying 
		\begin{equation}
			\delta\langle{i\phi_kg_kX},\varphi\rangle_{st}=\sum\limits_{j=1}^{N}\langle{-\phi_j\phi_kg_j(s)g_k(s)X(s)+\text{$i$}\phi_kg_{kj}'(s)X(s)},\varphi\rangle\delta{B_{j,st}}+\delta{R^{\langle{i\phi_kg_kX},\varphi\rangle}_{k,st}},
			\label{1.19}
		\end{equation}
		and $\Vert\langle\phi_j\phi_kg_jg_kX,\varphi\rangle\Vert_{\alpha,[s,t]} + \Vert\langle\phi_kg_{kj}'X,\varphi\rangle\Vert_{\alpha,[s,t]} < \infty$, $\Vert{R^{\langle{i\phi_kg_kX},\varphi\rangle}_k}\Vert_{2\alpha,[s,t]} < \infty$, $\alpha$ $\in$ $\left(\frac{1}{3},\frac{1}{2}\right)$.
	\end{definition}

	In the characterization of soliton dynamics, 
    a key role is played by the ground state, which is the unique radial positive solution to the nonlinear elliptic equation
	\begin{equation}
		\Delta{Q}-Q+Q^p=0.
		\label{1.6}
	\end{equation}
	It is known (see, e.g., \cite{BL83})
	that the ground state decays  exponentially fast at infinity, i.e., there exist $C$, $\delta>0$ such that for any $|\nu|\le3$,
	\begin{equation}
		|\partial_x^\nu Q(x)|\le Ce^{-\delta |x|},\ \ x\in\bbr^d.
		\label{1.7}
	\end{equation} 
	For any $w>0$, 
	let $Q_{w}$ denote the rescaled ground state
	\begin{equation}
		Q_{w}(x):=w^{-\frac{2}{p-1}}Q(\frac{x}{w}), \quad x\in \mathbb{R}^d.
		\label{1.5}
	\end{equation}
	Note that by the ground state equation
	\eqref{1.6},
	$Q_{w}$ satisfies the equation
	\begin{equation}
		\Delta{Q}_{w}-w^{-2}Q_{w}+Q_{w}^p=0.
		\label{1.4}
	\end{equation}

	Given any $K\in \mathbb{N}$ 
	and any 
	$w_k\in\bbr^+$, $\alpha_k^0\in\bbr^d$, $\theta^0_k\in\bbr$, 
	$1\leq k\leq K$, 
	our aim is to construct stochastic multi-solitons
	which behave asymptotically as
	a sum of solitary waves
	\begin{equation}
		R_k(t,x):=Q_{w_k}(x-v_kt-\alpha_k^0)e^{\text{$i$}(\frac{1}{2}v_k\cdot{x}-\frac{1}{4}|v_k|^2t+(w_k)^{-2}t+\theta_k^0)} 
		\label{1.11}
	\end{equation}
	with distinct velocities 
	$$
	v_k\neq v_{k'} \quad  {\rm for\ any}\ k\neq k'. 
	$$

	\medskip
	The main result of this paper is the following:

	\begin{theorem}   \label{th1.3}
		Consider \eqref{SNLS} with $p\in{(1+\frac{4}{d},1+\frac{4}{(d-2)_+})}$, $d\geq 1$.
		Assume $(A0)$ and $(A1)$ with $\nu_*\ge\nu_0$ in Case (II),
		where $\nu_0$ is a deterministic constant given by \eqref{def-v0} below.
		Then, 
		there exists a positive random time $T_0$ such that for $\mathbb{P}$-a.e. $\omega\in\Omega$, there exist $X_*(\omega)\in H^1$ and an $H^1$-valued solution $X(t,\omega)$ to \eqref{SNLS} on $[T_0(\omega),\infty)$ satisfying $X(T_0,\omega)=X_*(\omega)$ and
		\begin{equation}
			\Vert e^{-W_*(t)}X(t)-\sum\limits_{k=1}^{K}R_k(t)\Vert_{H^1}\le Ct\phi^{\frac{1}{2}}(\delta t), \ t\ge T_0.
			\label{1.20}
		\end{equation}
		Here, the random phase function 
        $W^*$ is given by 
		\begin{equation}
			W_*(t,x)=-\sum\limits_{k=1}^{N}\int_{t}^{\infty}i\phi_k(x)g_k(s)dB_k(s),
			\label{1.21}
		\end{equation}
		the solitary waves $\{R_k\}$ are given by
		\eqref{1.11}, $\phi$ is the spatial decay function of the noise in (\ref{1.17}) and $C,\delta>0$  
        are deterministic positive constants. 
        In particular, for $t\ge T_0$,
		\begin{equation}
			\Vert X(t)-\sum\limits_{k=1}^{K}R_k(t)\Vert_{H^1}\le C\sum\limits_{k=1}^{N}\left(\int_{t}^{\infty}g_k(s)^2ds\log\left(\int_{t}^{\infty}g_k(s)^2ds\right)^{-1}\right)^{\frac{1}{2}}+Ct\phi^{\frac{1}{2}}(\delta t).
			\label{1.22}
		\end{equation}
	\end{theorem}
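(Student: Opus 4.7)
The plan is to combine the Doss-Sussmann rescaling with the modulation-plus-topological argument of C\^ote-Martel-Merle \cite{CMM11}. First I would set $y(t):=e^{-W_*(t)}X(t)$, which formally converts \eqref{SNLS} into a random NLS of the form
\begin{equation*}
i\partial_t y +\Delta y + b_1\cdot\nabla y+b_0\, y+e^{(p-1)\Re W_*}|y|^{p-1}y=0,
\end{equation*}
where $b_0,b_1$ depend on $\nabla W_*,\Delta W_*$ and, by $(A0)$--$(A1)$, inherit the spatial decay $\phi(|x|)$. On each $k$-th soliton essential region (concentrated at $v_k t+\alpha_k^0$, $v_k\ne 0$), the supremum of $|b_j|$ is therefore controlled by $\phi^{1/2}(\delta t)$. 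In this way, the estimate \eqref{1.20} reduces to proving $\|y(t)-\sum_k R_k(t)\|_{H^1}\le C t\,\phi^{1/2}(\delta t)$, and \eqref{1.22} follows by estimating the difference $\|(e^{W_*}-1)\sum_k R_k\|_{H^1}$ by the L\'evy--H\"older tail of the stochastic integral that defines $W_*$, which matches exactly the first term in \eqref{1.22} thanks to assumption \eqref{1.15}.

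Next I would construct $y$ as the pathwise limit of backward-in-time approximations $\{y_n\}$ on intervals $[T_0,T_n]$ with $T_n\uparrow\infty$. For each $n$, take final data of the form
\[
y_n(T_n)=\sum_{k=1}^K R_k(T_n)+\sum_{k=1}^K a_{k,n}^-\, Y_k^-(T_n),
\]
where $Y_k^-$ is the unstable eigenmode of the linearized operator $\mathcal{L}_k$ around the $k$-th soliton (associated with the negative real eigenvalue $-e_0\ne 0$ present only in the mass-supercritical regime). The parameters $a_{k,n}^-\in[-\rho_n,\rho_n]$ are free and will be chosen by a topological argument. Perform the geometrical decomposition $y_n(t)=\sum_k\widetilde R_k(t)+\varepsilon(t)$ with $\widetilde R_k$ the soliton carrying the modulated parameters $(w_k(t),v_k(t),\alpha_k(t),\theta_k(t))$, imposing the standard orthogonality of $\varepsilon$ against the generalized kernel of each $\mathcal L_k$. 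The modulation ODEs and the equation for $\varepsilon$ then follow by symplectic differentiation; the novelty here is that the rescaling produces additional source terms $b_0\widetilde R_k+b_1\cdot\nabla\widetilde R_k$ whose size is dictated by $\phi^{1/2}(\delta t)$.

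The bootstrap assumptions read, schematically,
\[
\|\varepsilon(t)\|_{H^1}+\sum_k\bigl(|\dot w_k|+|\dot\alpha_k-v_k|+|\dot\theta_k|+|\dot v_k|\bigr)\le C t\,\phi^{1/2}(\delta t),\qquad |a_k^\pm(t)|\le C\phi^{1/2}(\delta t),
\]
where $a_k^\pm(t):=\langle\varepsilon(t),\mathcal{Z}_k^\pm\rangle$ are the projections on the stable and unstable modes. For $\varepsilon$ the key ingredient is a coercive localized Lyapunov functional (weighted mass/energy/momentum, modified by projection away from $\{Y_k^\pm\}$); the soliton-interaction terms decay exponentially thanks to the velocity separation $v_k\ne v_{k'}$, and the only new non-negligible contributions come from $b_0,b_1$, which by construction are of size $\phi^{1/2}(\delta t)$. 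The stable-mode ODE $\dot a_k^+\approx e_0\, a_k^+$ is integrated backward from $T_n$ with the final condition $a_k^+(T_n)=0$; the unstable-mode ODE $\dot a_k^-\approx -e_0\, a_k^-$ is where the topological argument enters: one argues by contradiction that for each $n$ one can choose $(a_{k,n}^-)_{k=1}^K\in[-\rho_n,\rho_n]^K$ such that $|a_k^-(t)|$ never touches $\rho_n$ on $[T_0,T_n]$, as otherwise the exit-time map would supply a continuous retraction of the closed $K$-ball onto its boundary, contradicting Brouwer.

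The main obstacle is closing the bootstrap in the presence of the random, non-Markovian coefficients $b_0,b_1$ while keeping the topological map continuous in the free parameters $(a_{k,n}^-)$. On the analytic side, the compatibility of the $t\,\phi^{1/2}(\delta t)$ bound on $\|\varepsilon\|_{H^1}$ with the sharper $\phi^{1/2}(\delta t)$ bound on $a_k^\pm$ is tight; it is precisely this tightness that forces assumption \eqref{1.15} on the Brownian tail in Case~(II) and the threshold $\nu_*\ge\nu_0$ in \eqref{def-v0} (so that polynomial decay in $t$ beats the growth factor $t$). On the stochastic side, the continuity of the exit-time map in $(a_{k,n}^-)$ must be established pathwise, which uses the stability of the backward rough-path flow associated with the controlled rough paths $\{g_k\}$ in Definition~\ref{def1.2}. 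Once uniform-in-$n$ bounds are secured, weak-$*$ compactness in $H^1$ together with the local well-posedness of \eqref{SNLS} provides a limiting profile $y$, and the desired stochastic multi-soliton is then $X(t):=e^{W_*(t)}y(t)$, from which \eqref{1.20} and \eqref{1.22} are read off the bootstrap bounds.
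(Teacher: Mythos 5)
Your overall strategy is the same as the paper's: rescale by $e^{-W_*}$, run the C\^ote--Martel--Merle modulation scheme backward from times $T_n\to\infty$ with modulated final data, close a bootstrap for $\varepsilon$, the parameters and $a_k^+$, handle $a_k^-$ by a Brouwer argument, and pass to the limit by compactness. (One small point: since $\phi_k,g_k$ are real, $W_*$ is purely imaginary, so your factor $e^{(p-1)\Re W_*}$ on the nonlinearity is identically $1$; the nonlinearity is exactly preserved, which is what makes mass conservation and the Lyapunov expansion go through unchanged.)

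The genuine gap is in your treatment of the final data. You set $y_n(T_n)=\sum_k R_k(T_n)+\sum_k a_{k,n}^- Y_k^-(T_n)$ and then treat $a_{k,n}^-$ as if it were the value of the unstable projection at $T_n$, with $a_k^+(T_n)=0$ for free. Neither holds: after the geometrical decomposition re-fits $(\alpha_k,\theta_k)$ at time $T_n$, the remainder is no longer the inserted perturbation, and since $\overline{Y^+}=Y^-$ the modes are not orthogonal, so inserting only $Y_k^-$ produces $\mathbf{a}^+(T_n)\approx y_*\,\mathbf{a}^-(T_n)\neq\mathbf{0}$ with $y_*=\int Y_1^2-Y_2^2\,dx$. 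The paper must therefore perturb the final data along \emph{both} $Y_k^{\pm}$ and prove (Proposition \ref{prop3.5}) that the composite map $\boldsymbol{b}\mapsto(\mathbf{a}^+(T),\mathbf{a}^-(T))$ is a diffeomorphism near the origin, which rests on the non-degeneracy of the $2K\times2K$ Jacobian \eqref{3.60} (invertible precisely because $y_*^2<1$) and, crucially for the stochastic setting, on the fact that its image contains a \emph{deterministic} ball, so that the radius used in the topological argument does not depend on $\omega$ or $n$. This is exactly the step the paper singles out as new relative to the subcritical case, and your proposal skips it; without it the prescribed-final-value map underlying your Brouwer argument is not defined. Two smaller omissions: the continuity of your exit-time map requires the transversality estimate $\frac{d\mathcal N}{dt}<0$ at the exit time (the analogue of \eqref{dN-N-esti}), not just continuity of the flow; and weak-$*$ $H^1$ compactness alone does not let you identify the limit as a solution with data $u_0$ — the paper proves uniform spatial decay of $u_n(T_0)$ to upgrade to strong $L^2$ convergence before invoking $L^2$ well-posedness.
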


	\begin{remark} 
    Because the right-hand side of \eqref{1.22} tends to 0 as $t\to\infty$ due to
		$\{g_l\}\subseteq L^2(\mathbb{R}^+)$ in $(A1)$, 
		the constructed stochastic solution in Theorem \ref{th1.3} converges to the given $K$ solitary waves as time tends to infinity.  
		As a result, Theorem \ref{th1.3} provides new examples for the soliton resolution conjecture
		in the stochastic mass-supercritical case.

	We also note that the temporal decay rate of stochastic multi-solitons in \eqref{1.20}
are dictated by the spatial decay rate of the noise. 
The decay rate can be of either exponential or polynomial type 
in two scenarios of noise 
Cases (I) and (II), respectively.
	This reflects the noise impact on soliton dynamics.
		\end{remark}

\begin{remark} 
It should be mentioned that, 
compared to the (sub)critical case \cite{RSZ23}, 
many difficulties emerge in the present 
mass-supercritical case. 
One major difficulty is that 
the linearized Schr\"odinger operator has two extra unstable directions 
$\mathbf{a}^\pm$ (see \eqref{3.50} below) in the supercritical case, 
which are much harder to control.  

The strategy here employs the modulation method 
inspired by \cite{CMM11} 
to modulate the final data of approximating solutions.  
As an immediate technical issue, 
the control of modulated final data 
to prescribed vectors 
requires 
more delicate analysis of 
the non-degeneracy of Jacobian matrices 
than in the (sub)critical 
case \cite{RSZ23},  
see Proposition \ref{prop3.5} below. 
We also derive that the radius of the modulation parameter 
and the constants in Proposition \ref{prop3.5} 
can be chosen to be deterministic, 
which is important to 
take a large random time 
to close the bootstrap estimates 
of modulation parameters and remainder in the geometrical decomposition. 

We remark that the 
bootstrap estimates are crucial to obtain 
uniform estimates of approximating solutions. 
In contrast to the (sub)critical case in \cite{RSZ23}, 
the bootstrap estimates in the supercritical case require 
an a-priori estimate of the unstable direction $\mathbf{a}^-$, which, however, 
cannot be closed by Gronwall's argument.
In order to overcome this problem, 
a topological argument for $\mathbf{a}^-$, 
based on Brouwer’s fixed point theorem, 
is performed on a ball with radius 
dictated by the spatial decay rate of the noise.  
We refer to Subsection \ref{Subsec-Strat-Proof} below for more detailed explanations of the difference between 
the supercritical and (sub)critical cases, 
as well as the deterministic case. 
\end{remark}

\begin{remark}
We are not sure about the 
measurability of 
the solution $X$
constructed in Theorem \ref{th1.3}. 
The measurability issue arises from 
the choice of $\boldsymbol{a}_n^-(\omega)\in B_{\mathbb{R}^K}(\phi^{\frac{1}{2}+\frac{1}{4d}}(\widetilde{\delta}n))$ 
in Proposition \ref{prop5.4} 
which is based on a topological argument, 
as well as from the compactness argument 
used to obtain a subsequence of approximating solutions in the proof of Theorem \ref{th1.6} 
in Section \ref{Sec-Proof-Main}.   
It does not seem obvious here how to use 
measurable selection theorems 
for instance in \cite{W80} to select measurable versions of 
$\boldsymbol{a}_n^-$ and of $X$ 
in our situation. 
\end{remark}

	\subsection{Strategy of the proof} 
    \label{Subsec-Strat-Proof} 
	Our proof utilizes the rescaling approach and the modulation analysis, 
	including topological arguments to treat the new unstable directions of the linearized Schr\"odinger operator,
	which is different from \cite{RSZ23} in the mass-(sub)critical case.
	
	To be precise,
	we use the rescaling or Doss-Sussman type transformation
	\begin{equation}
		u(t):=e^{-W_*(t)}X(t),
		\label{1.28}
	\end{equation}
	where $W_*$ is given by \eqref{1.21},
	to transfer
	\eqref{SNLS} to a random NLS
	\begin{equation} \label{RNLS}
		\begin{cases}
			\text{$i$}\partial_tu+(\Delta+b_*\cdot\nabla+c_*)u+|u|^{p-1}u=0,
			\\
			u(T_0)=e^{-W_*(T_0)}X_0,
		\end{cases}
		\tag{RNLS}
	\end{equation}
	where the random coefficients $b_*$ and $c_*$
	have the expressions
	\begin{align}
		b_*(t,x)&=2\nabla{W_*(t,x)}=2\text{$i$}\sum\limits_{k=1}^{N}\int_{t}^{\infty}\nabla{\phi_k(x)}g_k(s)dB_k(s),
		\label{1.30}
		\\c_*(t,x)&=\sum\limits_{j=1}^{d}(\partial_jW_*(t,x))^2+\Delta{W_*(t,x)}\nonumber\\
		&=-\sum\limits_{j=1}^{d}\left(\sum\limits_{k=1}^{N}\int_{t}^{\infty}\partial_j\phi_k(x)g_k(s)dB_k(s)\right)^2
		+\text{$i$}\sum\limits_{k=1}^{N}\int_{t}^{\infty}\Delta{\phi_k(x)g_k(s)dB_k(s)}.
		\label{1.31}
	\end{align}

	The rescaled equation \eqref{RNLS} enables us to perform pathwise analysis
	in a sharp way,
	which is in general not possible for It\^o integral.
	This approach works successfully for
	well-posedness and optimal control problems,
	see \cite{BRZ14, BRZ16, BRZ17, Z20, Z23}.
	It is also comparable with the Fourier restriction method
	and permits to exploit the noise regularization effect on scattering.
	We refer the interested readers to \cite{HRSZ24,HRSZ25,HRZ19,SZZ25}.
	
	It should be mentioned that, 
    though the rescaling transform provides 
    a nice way to reveal the structure of SNLS, 
    it does not remove the difficulty 
    in the analysis.  
	One obstacle is to control the derivative term
	$b_*\cdot\nabla u$ caused by noise, 
	which is in general difficult for Schr\"odinger equations,
	due to the lack of global regularity of Schr\"odinger groups.
	See, for instance,
	the case of the Schr\"odinger map \cite{BIKT11}.
	This problematic term
	can be controlled here under the asymptotic flat  condition $(A0)$ of the noise,
	by using the local smoothing estimates in \cite{HRZ19,Z22}.
	An alternative way to control this term
	is to use lateral Strichartz estimates,
	as in the context of stochastic Zakharov systems
	\cite{HRSZ24, HRSZ25},
	which help
	to weaken the regularity condition on the noise.

	The $H^1$ local well-posedness of \eqref{RNLS}
	can be proved by using analogous arguments as in \cite{BRZ16, HRZ19}. 
    Then the $H^1$ well-posedness of \eqref{SNLS} 
	can be inherited from that of \eqref{RNLS} 
    via Theorem \ref{Thm-SNLS-H1} below.  
	This fact has been proved in \cite{RSZ23} 
	in the  mass-(sub)critical case.
	With slight modifications, 
	based on the Sobolev embedding $H^1(\mathbb{R}^d)\hookrightarrow L^{p+1}(\mathbb{R}^d)$ with $p\in (1+\frac{4}{d},1+\frac{4}{(d-2)_+})$, 
	the proof there also applies to 
	the present mass-supercritical case.

	\begin{theorem}  \label{Thm-SNLS-H1}
		Let $p\in (1+\frac 4d, 1+ \frac{4}{(d-2)_+})$, $d\geq 1$. Let u be the solution to \eqref{RNLS} on $[T_0, \tau^*)$ with $X_0\in H^1$,
		where $T_0, \tau^*\in (0,\infty]$ are random variables.
		Then, for $\mathbb{P}$-a.e. $\omega\in \Omega$,
		$X(\omega):= e^{W_*(\omega)} u(\omega)$ is the solution to \eqref{SNLS} on $[T_0(\omega), \tau^*(\omega))$ in the sense of Definition \ref{def1.2}.
	\end{theorem}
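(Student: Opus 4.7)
The plan is to test the integral identity \eqref{1.18} against $\varphi \in C_c^\infty$ for $X := e^{W_*}u$ and verify it via an Ito-type product rule in the controlled rough path sense, mirroring the mass-(sub)critical argument of \cite{RSZ23}. Two structural observations drive the argument. First, for $\mathbb{P}$-a.e. $\omega$ the random coefficients $b_*(\omega,\cdot)$ and $c_*(\omega,\cdot)$ are pathwise well-defined and bounded (by assumption $(A0)$), so $u(\omega)$ solves a deterministic non-autonomous PDE and $t \mapsto \langle u(t,\omega),\varphi\rangle$ is absolutely continuous, hence of finite variation; no cross variation with the rough path $W_*$ will arise. Second, since $\phi_k, g_k$ are real-valued and the noise coefficient carries a factor $i$, the random phase $W_*$ is purely imaginary, so $e^{W_*}$ is pointwise unitary and $|X|=|u|$.

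With these in hand, I would differentiate $X = e^{W_*}u$ via the rough Ito formula. The rough chain rule applied to $W_*$ yields
\[
d\,e^{W_*} \;=\; e^{W_*}\sum_{k=1}^{N} i\phi_k g_k\, dB_k \;-\; \mu\, e^{W_*}\, dt,
\]
the corrector $-\mu\,e^{W_*}dt$ arising from the quadratic variation of $W_*$ together with $(i)^2=-1$ and the definition \eqref{1.2} of $\mu$. Combining this with the algebraic identity
\[
\Delta\!\left(e^{W_*} u\right) \;=\; e^{W_*}\!\left(\Delta u + 2\nabla W_*\!\cdot\! \nabla u + (|\nabla W_*|^2 + \Delta W_*)\, u\right) \;=\; e^{W_*}(\Delta u + b_*\!\cdot\!\nabla u + c_* u),
\]
which is immediate from \eqref{1.30}--\eqref{1.31}, and using \eqref{RNLS}, one gets $e^{W_*}\partial_t u = -i\,e^{W_*}(\Delta u + b_*\cdot\nabla u + c_* u) - i\,e^{W_*}|u|^{p-1}u = i\Delta X + i|X|^{p-1}X$, where the last equality uses $|u|^{p-1}e^{W_*}u = |X|^{p-1}X$ because $|e^{W_*}|=1$. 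Since $u$ is of finite variation in $t$ when paired with $\varphi$, the product rule then yields
\[
dX \;=\; (d\,e^{W_*})\, u + e^{W_*}\,\partial_t u\, dt \;=\; i\Delta X\, dt + i|X|^{p-1}X\, dt - \mu X\, dt + \sum_{k=1}^{N} X G_k\, dB_k,
\]
which, tested against $\varphi$, is exactly \eqref{1.18}.

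The main technical obstacle will be verifying the controlled rough path structure demanded by Definition \ref{def1.2}, namely, that $\langle i\phi_k g_k X, \varphi\rangle$ lies in $\mathscr{D}_B^{2\alpha}$ with Gubinelli derivative satisfying \eqref{1.19} and with the required $\alpha$- and $2\alpha$-H\"older bounds. This is inherited from the controlled structure of $W_*$ itself, read off from its integral representation \eqref{1.21}: one composes with the rough chain rule to obtain the controlled structure of $e^{W_*}$, then multiplies by the finite-variation factor $u$, and finally pairs with $\varphi$; the required bounds follow from the asymptotic flatness $(A0)$ together with $\{g_k\}\in L^2(\mathbb{R}^+)$ and the almost sure regularity of the Brownian rough path $\mathbf{B}\in\mathscr{C}^\alpha$. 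As the authors remark, the only genuine change relative to \cite{RSZ23} lies in handling the nonlinear term $|u|^{p-1}u$ for the mass-supercritical exponent $p\in(1+\tfrac{4}{d}, 1+\tfrac{4}{(d-2)_+})$, where the continuous Sobolev embedding $H^1(\mathbb{R}^d)\hookrightarrow L^{p+1}(\mathbb{R}^d)$ places $|u|^{p-1}u$ in the appropriate dual space; the remainder of the argument transfers verbatim.
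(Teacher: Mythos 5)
Your proposal is correct and follows essentially the same route the paper intends: the paper gives no self-contained proof here but defers to the mass-(sub)critical argument of \cite{RSZ23}, noting that the only modification needed is the Sobolev embedding $H^1(\mathbb{R}^d)\hookrightarrow L^{p+1}(\mathbb{R}^d)$ to handle the supercritical nonlinearity, which is exactly the reconstruction you give (finite variation of $\langle u,\varphi\rangle$ so no cross-variation, the rough It\^o corrector $-\mu\,dt$ from the quadratic variation of $W_*$, the Laplacian identity recovering $b_*,c_*$, and the Gubinelli derivative \eqref{1.19} inherited from the controlled structure of $e^{W_*}$). Only note the sign slip in the intermediate line $e^{W_*}\partial_t u=-i\,e^{W_*}(\cdots)$, which should read $+i\,e^{W_*}(\cdots)$ since \eqref{RNLS} gives $\partial_t u=i(\Delta+b_*\cdot\nabla+c_*)u+i|u|^{p-1}u$; your final identity $i\Delta X+i|X|^{p-1}X$ is the correct one.
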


	As a result, the proof of Theorem \ref{th1.3} can be reduced to that of the following result for the rescaled random equation \eqref{RNLS}.

	\begin{theorem}\label{th1.6}
		Consider \eqref{RNLS} with $p\in (1+\frac{4}{d},1+\frac{4}{(d-2)_+} )$, 
        $d\geq 1$.
		Assume $(A_0)$ and $(A_1)$ with $\nu_*\ge\nu_0$ in Case (II),
		where $\nu_0$ is a deterministic constant given by \eqref{def-v0}.
		Then, 
		there exists a positive random time $T_0$ such that for $\mathbb{P}$-a.e. $\omega\in\Omega$, there exist $u_*(\omega)\in H^1$
		and a unique solution $u \in C([T_0(\omega),\infty];H^1)$ to  (\ref{RNLS}) satisfying $u(\omega,T_0)=u_*(\omega)$ and 
		\begin{equation}
			\Vert u(t)-\sum\limits_{k=1}^{K}R_k(t)\Vert_{H^1}\le Ct\phi^{\frac{1}{2}}(\delta t), \ t\ge T_0.
			\label{1.32}
		\end{equation}
		where the solitary waves $\{R_k\}$ are given by \eqref{1.11}, $\phi$ is the decay function in (\ref{1.17}) and $C,\delta>0$.
	\end{theorem}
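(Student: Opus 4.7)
The plan is to construct the desired multi-soliton of \eqref{RNLS} as the backward limit of a sequence of approximate solutions, adapting the modulation/topological scheme of \cite{CMM11} to the lower-order random perturbation $b_{*}\cdot\nabla + c_{*}$. First, I fix an increasing sequence of final times $T_n \to \infty$ (with $T_n$ larger than the random time $\sigma_*$) and, for each large $n$, solve \eqref{RNLS} backward on $[T_0, T_n]$ from a final datum of the form
\begin{equation*}
u_n(T_n) \;=\; \sum_{k=1}^{K} R_k^{\boldsymbol{\mu}_{n,k}(T_n)}(T_n) \;+\; \sum_{k=1}^{K} a_{n,k}^{-}\, Y_{k}^{-}(T_n),
\end{equation*}
where $R_k^{\boldsymbol{\mu}}$ denotes the solitary wave with modulation parameters $\boldsymbol{\mu}=(\lambda,\alpha,\beta,\theta)$ close to the prescribed values induced by $(w_k,\alpha_k^0,v_k,\theta_k^0)$, and $Y_k^{-}$ are the $K$ unstable eigenfunctions of the linearized operator around each $R_k$. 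The non-degeneracy of the Jacobian obtained in Proposition~\ref{prop3.5} (whose radius and constants are deterministic) allows me to prescribe both the standard modulation parameters and the $K$ real components $\mathbf{a}_n^{-} = (a_{n,k}^{-})_{k=1}^K$ of the unstable direction at $T_n$.

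On each approximating interval I perform the geometric decomposition $u_n(t) = \sum_{k=1}^K R_k^{\boldsymbol{\mu}_{n,k}(t)}(t) + \varepsilon_n(t)$ subject to orthogonality conditions that annihilate the (sub)critical kernel of the linearized operator. This yields ODEs for $\dot{\boldsymbol{\mu}}_{n,k}$ whose right-hand sides contain, besides quadratic terms in $\varepsilon_n$, the random forcing coming from $b_{*}\cdot\nabla u_n + c_{*} u_n$. The two scalar modes $\mathbf{a}_n^{\pm}(t) := (\langle \varepsilon_n(t), Z_k^{\pm}(t)\rangle)_{k=1}^K$ decouple at leading order: $\dot{\mathbf{a}}_n^{+} \approx -e_0\,\mathbf{a}_n^{+} + \mathrm{err}$ is backward-stable, while $\dot{\mathbf{a}}_n^{-} \approx +e_0\,\mathbf{a}_n^{-} + \mathrm{err}$ is backward-unstable; the error term collects quadratic contributions in $\varepsilon_n$ together with noise contributions controlled pathwise by the spatial decay $\phi(\delta t)$ and by the tail functional $B_{*}(t)\lesssim 1/t$.

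The next step is to close a bootstrap on $[T_0, T_n]$ of the schematic shape
\begin{equation*}
\|\varepsilon_n(t)\|_{H^1} + |\boldsymbol{\mu}_{n,k}(t)-\boldsymbol{\mu}_{k}^{\infty}(t)| + |\mathbf{a}_n^{+}(t)| \;\lesssim\; t\,\phi^{1/2}(\delta t), \qquad |\mathbf{a}_n^{-}(t)| \;\lesssim\; \phi^{1/2+1/(4d)}(\widetilde{\delta} t).
\end{equation*}
The estimate for $\varepsilon_n$ rests on coercivity of the linearized energy modulo the kernel and the unstable directions, combined with an almost-monotonicity / localized virial functional exploiting the distinct velocities; the asymptotic flatness $(A0)$ is precisely what allows the terms $b_{*}\cdot\nabla u_n$ and $c_{*} u_n$ to be absorbed. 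The modulation parameters close by direct integration of their ODE, and $\mathbf{a}_n^{+}$ closes by a backward Gronwall argument starting from $\mathbf{a}_n^{+}(T_n) = 0$. The unstable mode $\mathbf{a}_n^{-}$, however, cannot be closed by Gronwall; here I invoke the topological Proposition~\ref{prop5.4}: via Brouwer's fixed point theorem applied to the map sending the final datum $\mathbf{a}_n^{-}(T_n)$ to its bootstrap exit value, I select $\mathbf{a}_n^{-}(T_n)\in \overline{B}_{\mathbb{R}^K}(\phi^{1/2+1/(4d)}(\widetilde{\delta} n))$ for which the threshold is never saturated on $[T_0, T_n]$.

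Finally, the uniform bounds together with the local compactness inherent to the geometric decomposition yield an $H^1$-weak and $L^2_{\mathrm{loc}}$-strong subsequential limit $u_n\to u$; this limit solves \eqref{RNLS} on $[T_0,\infty)$ and inherits the pointwise bound \eqref{1.32}. The hard part is the quantitative balance inside the topological step: the radius $\phi^{1/2+1/(4d)}(\widetilde{\delta} n)$ for $\mathbf{a}_n^{-}$ must be large enough for the Brouwer degree argument to dominate the random drift appearing in $\dot{\mathbf{a}}_n^{-}$, and at the same time small enough to be absorbed into the target rate $t\,\phi^{1/2}(\delta t)$. It is this double constraint, coupled with the $1/t$ control \eqref{B*-bdd} of $B_{*}$, that fixes the lower bound $\nu_*\ge \nu_0$ in Case (II) and pins down the temporal decay advertised in \eqref{1.32}.
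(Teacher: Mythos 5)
Your overall architecture coincides with the paper's: backward integration from final times $T_n=n$, geometric decomposition with orthogonality killing the translation/phase directions, modulation equations plus a coercivity/virial argument for $\varepsilon_n$, backward Gronwall for $\mathbf{a}_n^+$, a Brouwer argument on the ball of radius $\phi^{\frac12+\frac1{4d}}(\widetilde\delta n)$ for $\mathbf{a}_n^-$, and a compactness passage to the limit. However, there is a concrete gap in your final--data ansatz. You perturb $R(T_n)$ only by $\sum_k a_{n,k}^- Y_k^-(T_n)$, i.e.\ by $K$ real parameters, and then claim both that $\mathbf{a}_n^-(T_n)$ equals the prescribed vector and that $\mathbf{a}_n^+(T_n)=\mathbf{0}$ (which you need to start the backward Gronwall for $\mathbf{a}_n^+$). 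These are $2K$ conditions and cannot be met with $K$ parameters: since $\overline{Y^+}=Y^-$, the pairing of $Y_k^-$ against the functional defining $a_k^+$ produces the quantity $y_*=\int (\operatorname{Re}Y^+)^2-(\operatorname{Im}Y^+)^2\,dx$, which is generically nonzero, so a pure $Y^-$ perturbation pollutes $\mathbf{a}_n^+(T_n)$ at the same order as $|\boldsymbol{a}_n^-|$. This is exactly why the paper's Proposition \ref{prop3.5} puts \emph{both} families $Y_k^\pm$ in the final datum with $2K$ coefficients $\boldsymbol{b}$, and why the non-degeneracy proof there hinges on the invertibility of the block matrix with diagonal $A$ and off-diagonal $y_*A$, using $|y_*|<1$. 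Your version is repairable (one could run Gronwall from $|\mathbf{a}_n^+(T_n)|\lesssim|\boldsymbol{a}_n^-|$ instead of $\mathbf{0}$, which is still within the bootstrap radius), but as written the step ``$\mathbf{a}_n^+$ closes starting from $\mathbf{a}_n^+(T_n)=0$'' fails, and the identity-on-the-sphere property of the Brouwer map, which the paper deduces from $\mathbf{a}_n^-(n)=\boldsymbol{a}_n^-$ exactly, would also need to be re-derived.

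Two smaller deviations are worth noting. First, you modulate a full set $(\lambda,\alpha,\beta,\theta)$ of parameters; in the mass-supercritical regime the scaling and Galilean directions are not degenerate for $\mathscr{L}$, and the paper (following \cite{CMM11}) modulates only $(\alpha_k,\theta_k)$, with the coercivity \eqref{6.1} requiring precisely the orthogonality to $\nabla Q$ and $Q$ plus control of $Y^\pm$; adding extra parameters forces extra orthogonality conditions not matched by the coercivity statement you would invoke. Second, for the limit you need strong convergence of $u_n(T_0)$ in $L^2(\mathbb{R}^d)$ globally, not merely $L^2_{\mathrm{loc}}$, in order to apply the $L^2$ well-posedness theory and identify a single limiting solution on $[T_0,\infty)$; this requires a uniform-in-$n$ tightness estimate (no mass at spatial infinity), which the paper obtains from an almost-conservation of localized mass and the exponential decay of the profiles, and which your ``local compactness'' remark does not supply.
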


	In the sequel, 
	we mainly prove Theorem \ref{th1.6}. 
	The proof utilizes the modulation method developed in \cite{CMM11,MM06} 
	and mainly proceeds in the following three steps.  
	The impact of noise
	on the construction of
	stochastic multi-solitons 
	is presented below as well.

	\medskip
	\paragraph{\bf $\bullet$ Geometric decomposition:}
	First in Section \ref{sec2},
	we derive the geometrical decomposition
	of approximating solutions into
	a sum of $K$ solitons plus a remainder term  
   $u=\sum_{k=1}^{K}\widetilde{R}_k+\varepsilon$, 
where $\widetilde{R}_k$ 
are modulated soliton profiles 
with modulation parameters $\{(\alpha_k, \theta_k)\}$, 
and $\ve$ is the remainder 
(see Proposition \ref{prop3.2} below).

    As mentioned above, 
    unlike in the mass-(sub)critical case \cite{RSZ23},
	the linearized Schr\"odinger operator 
   $\mathscr{L}$ (see \eqref{1.8})
    has four unstable directions, 
    reflected by the following 
    coercivity type estimate  
    \begin{equation*}
		\left(\mathscr{L}f,f\right)\ge C\Vert f\Vert_{H^1}^2-\frac{1}{C}\left(\!\left(\int\nabla Qf_1dx\right)^2\!+\!\left(\int Qf_2dx\right)^2\!+\!\left(\text{Im}\int Y^+\bar{f}dx\right)^2\!+\!\left(\text{Im}\int Y^-\bar{f}dx\!\right)^2\right) 
	\end{equation*} 
	for any $f=f_1+if_2\in H^1$, 
    where 
    $Y^\pm$ are two eigenfunctions 
    of the linearized 
    Schr\"odinger operator 
    in the mass-supercritical case.  
    
    The new eigenfunctions 
    give rise to two extra unstable directions 
    $a_k^\pm := {\rm Im} \int \wt{Y}_k^\pm \ol{\ve} dx$,
	where $\wt{Y}_k^\pm$ are the modulated eigenfunctions of $Y^\pm$ with speed $v_k$, 
	$1\leq k\leq N$. 
    The extra unstable directions 
	cannot be canceled by imposing
	orthogonal conditions in the geometric decomposition as in the (sub)critical case \cite{RSZ23}. 
	Instead, 
	they 
	are controlled by modulating the final data
	with the eigenfunctions. 
    That is, 
    instead of equation \eqref{RNLS},  
    we consider the following approximating random equation 
    \begin{equation}
		\begin{cases}\text{$i$}\partial_tu+(\Delta+b_*\cdot\nabla+c_*)u+|u|^{p-1}u=0,
			\\[5pt]u(T)=R(T)+\text{$i$}\sum\limits_{k,\pm}b_k^{\pm}Y_k^{\pm}(T)
		\end{cases}
	\end{equation}  
    (see \eqref{3.1} below). 
    One advantage to modulate the final data 
    is, that it allows to steer the unstable 
    directions $(a_k^+(T), a_k^-(T))$ at time $T$ to any prescribed 
    vector in 
    ${\bf 0}\times B_{\mathbb{R}^K}(r_0)$ 
    with $r_0 \ll 1$. 
    The  proof of this fact requires 
    careful analysis of the 
    {\it non-degeneracy}  
    of Jacobian matrices,  
    which do not appear 
	in the mass-(sub)critical case \cite{RSZ23}. 
    See the proof of Proposition \ref{prop3.5} below.

	Let us mention that 
	a detailed proof for the 
    {\it exponential decay} of the eigenfunctions $Y^\pm$ 
    is given in Lemma \ref{Y} below. 
	The exponential decay property of the eigenfunctions, 
    as well as of 
    the ground state, 
    allow us 
	to decouple the iterations between
	different soliton profiles and eigenfunctions,
	at the cost of exponential decay orders, 
	which are favorable in the bootstrap estimates.

	\medskip
	\paragraph{\bf $\bullet$ Bootstrap estimates:}
	The next step
	is to establish the crucial 
    {\it bootstrap estimates} of 
     the unstable direction $a_k^+$, 
     and 
	the modulation parameters 
    $(\alpha_k, \theta_k)$ and the remainder $\ve$ in the geometrical decomposition. 
	The bootstrap estimates are important to derive 
	uniform estimates for approximating solutions 
	up to a universal time, 
	and thus, allow to construct 
	stochastic multi-solitons by using compactness arguments.   
	This constitutes the main technical part
	of Sections \ref{subsec cm} and \ref{sec4}. 
    
	The remainder term in the geometrical decomposition  
    can be controlled by a
	coercivity type estimate of the Lyapunov functional, 
	see Proposition \ref{coe} below.

	The subtleness here 
	is that,  
	unlike in the deterministic case \cite{CMM11}, 
	the presence of noise,
	especially with the polynomial decay rate in {\rm Case (II)},
	affects 
    the decay rates in the bootstrap estimates.  
    
	As a matter of fact,  
	in the deterministic case, 
	because the ground state and eigenfunctions 
	decay exponentially fast at infinity, 
	the exponential decay rate is sufficient 
	to close bootstrap estimates.   
	However, in the stochastic {\rm Case (II)}, 
    the above key quantities, 
    that is, 
	the modulation parameters, the remainder
	and the unstable directions, 
     have merely polynomial decay rates,
	rather than the exponential decay rate in the deterministic case. 
	As a result, 
	the derivation of 
    appropriate decay rates 
	to close bootstrap estimates 
	is much more delicate. 
    Under the a-priori control 
of the unstable direction 
$
    |{a}_k^-(t)|\le \phi^{\frac{1}{2}+\frac{1}{4d}}(\widetilde{\delta}t),
$
we derive the bootstrap estimates 
\begin{align*} 	
        |\alpha_{k}(t)-\alpha_k^0|+|\theta_{k}(t)-\theta_k^0| 
            \le t\phi^{\frac{1}{2}}(\widetilde{\delta}t),
            \quad 
            \Vert \varepsilon(t)\Vert_{H^1}\le \phi^{\frac{1}{2}}(\widetilde{\delta}t),
            \quad 
            |{a}_k^+(t)|\le \phi^{\frac{1}{2}}(\widetilde{\delta}t). 
		\end{align*} 
        
    We note that the above decay rates of bootstrap estimates 
	are dictated by the spatial decay rate of the noise.   
   More technically,  
	the $t^{-1}$ decay rate of 
	the tail of the noise $B_*(t)$ in \eqref{B*-bdd} below is essential to close the bootstrap estimate of $\|\varepsilon\|_{H^1}$ in {\rm Case (II)}, 
	see, e.g., estimate \eqref{5.24} below.  
These facts reflect 
    the effect of noise on the  soliton dynamics.

	It is also worth noting that,
	because of the possible singularity at the origin of
	the second derivative of the supercritical nonlinearity  
	in high dimensions,
	the extra unstable directions $\{a_k^\pm\}$
	are controlled by $\|\ve\|_{H^1}^{p\wedge 2}$,
	together with the noise decay rate and the negligible exponential decay rate.
	See Proposition \ref{prop3.6} below.

	\medskip
	\paragraph{\bf $\bullet$ Topological arguments:} 

    In general, 
    with the help of bootstrap estimates,
	one can obtain  uniform estimates 
	of approximating solutions 
	by using standard continuity argument. 
    See, e.g., \cite{MM06,RSZ23} 
    in the mass-(sub)critical case.

    In contrast to that, 
    due to the unstable direction 
	$a_k^-$ in the supercritical case, 
	the above bootstrap estimates 
	only allow to refine the estimates of the modulation parameters, 
    the remainder 
	and   the unstable direction $a_k^+$, 
    but require a-priori control of 
	the other unstable direction 
	$a_k^-$.

	In order to achieve the required a-priori control of $a_k^-$,  
	we use a topological argument 
	based on the Brouwer fixed point theorem 
	inspired by \cite{CMM11}. 
	Again, one needs to take into account the influence of noise, 
    and the radius of the ball where the topological argument is performed is dictated 
    by the decay rate of the noise. 
	For instance, 
	in {\rm Case (II)},
	the topological arguments are performed on 
    the ball 
$B_{\mathbb{R}^K} (\phi^{\frac{1}{2}+\frac{1}{4d}}(\widetilde{\delta}{n}))$, 
where $\phi$ has polynomial decay 
rate in \eqref{1.17}, 
that is different from the exponential rate in the deterministic case. 
See Proposition \ref{prop5.4} below.

	\medskip
	
	\begin{notation}
		For any $x=(x_1,x_2,\cdots,x_d)\in\mathbb{R}^d$ and any multi-index $\nu=(\nu_1,\nu_2,\cdots,\nu_d)$, let $|x|:=\big(\sum_{l=1}^dx_l^2\big)^{\frac{1}{2}}$, $\partial^{\nu}_x:=\partial^{\nu_1}_{x_1}\cdots\partial^{\nu_d}_{x_d}$ and $|\nu|:=\nu_1+\cdots+\nu_d$.
		For $s\in\mathbb{R}$, $1\le p\le+\infty$, let $W^{s,p}(\mathbb{R}^d)$ denote the standard Sobolev spaces, and $H^s:=W^{s,2}(\mathbb{R}^d)$. In particular, $L^p:=W^{0,p}(\mathbb{R}^d)$ is the space of $p$-integrable (complex-valued) functions endowed with the norm $\|\cdot\|_{L^p}$.
		When $p=2$, $L^2$ is the Hilbert space endowed with the inner product $\langle v,w\rangle=\int_{\mathbb{R}^d}v(x)\bar{w}(x)dx$.

		Given any two Banach spaces $\mathcal{X}$ and $\mathcal{Y}$ and any
		Fr\'echet differentiable map $F:\mathcal{X}\to\mathcal{Y}$,
		let $dF(x)\in L(\mathcal{X},\mathcal{Y})$
		($dF$ for short)
		denote the Fr\'echet derivative of $F$ at $x$.
		For any $h\in\mathcal{X}$,
		$dF.h$ denotes the directional derivative of $dF$ along the direction $h$.
		Moreover, let $B_{\mathcal{X}}(x,r)$ (resp. $\mathring B_{\mathcal{X}}(x,r)$) denote the closed (resp. open) ball of the Banach space $\mathcal{X}$, centered at $x$ with radius $r>0$,
		and $S_{\mathcal{X}}(x,r)$ the corresponding sphere. If $x=0$, we simply write $B_{\mathcal{X}}(r)$, $\mathring B_{\mathcal{X}}(r)$ and  $S_{\mathcal{X}}(r)$.
		
		Throughout this paper, positive constants $C$, $\eta$ and $\delta$ may change from line to line.
		Finally,  $f=\mathcal{O}(g)$ means that $|f/g|$ stays bounded, and $f=o(g)$ means that $|f/g|$ converges to zero.
	\end{notation}

	\section{Geometrical decomposition} \label{sec2}
	
	This section is devoted to the geometrical decomposition of \eqref{RNLS} with modulated initial data.
	The crucial role here is played by
	the spectrum of linearized Schr\"odinger
	operators around the ground state.

	\subsection{Linearized Schr\"odinger operators}
	Let $\mathscr{L}=(\mathscr{L}_+,\mathscr{L}_-)$ be the linearized operator around the ground state,
	given by
	\begin{equation}
		\mathscr{L}_+:=-\Delta+I-pQ^{p-1}, \quad \mathscr{L}_-:=-\Delta+I-Q^{p-1}
		\label{1.8}
	\end{equation}
	with $p\in(1+\frac{4}{d},1+\frac{4}{(d-2)_+})$. For any complex valued function $f=f_1+if_2\in H^1$, let
	\begin{equation}
		\mathscr{L}f:=-\mathscr{L}_-f_2+i\mathscr{L}_+f_1,
		\label{1.81}
	\end{equation}
	and
	\begin{equation}
		\left(\mathscr{L}f,f\right):=\int f_1\mathscr{L}_+f_1dx+\int f_2\mathscr{L}_-f_2dx.
		\label{1.9}
	\end{equation}
	
	It is known (see, e.g., \cite{G90,S06,W85}) that  the linearized Schr\"odinger operator $\mathscr{L}$
	has exactly one pair of real nonzero eigenvalues $e_0(>0)$ and $-e_0$, with the corresponding normalized eigenfunctions $Y^{\pm}$
	satisfying
	\begin{align} \label{Y-S-decay}
		Y^\pm \in \mathcal{S}(\mathbb{R}^d),
	\end{align}
	$\Vert Y^{\pm}\Vert_{L^2}=1$,
	$\bar{Y}^+=Y^-$ and
	\begin{equation}
		\mathscr{L}Y^\pm=\pm e_0Y^\pm. \label{1.10}
	\end{equation}
	Moreover,
	it  has the following coercivity property,
	which is important to derive the geometrical decomposition
	and uniform estimates of solutions:
	\begin{equation}
		\left(\mathscr{L}f,f\right)\ge C\Vert f\Vert_{H^1}^2-\frac{1}{C}\left(\!\left(\int\nabla Qf_1dx\right)^2\!+\!\left(\int Qf_2dx\right)^2\!+\!\left(\text{Im}\int Y^+\bar{f}dx\right)^2\!+\!\left(\text{Im}\int Y^-\bar{f}dx\!\right)^2\right) 
		\label{6.1}
	\end{equation}
	for any $f=f_1+if_2\in H^1$,
	where $C>0$ is a universal positive constant.
	See, e.g.,  \cite{CMM11, DR10}.
	
	\begin{remark}
		The above coercivity estimate reveals that 
		the linearized operator $\mathscr{L}$ has four unstable directions.
		The first two can be controlled by the orthogonality conditions in the geometrical decomposition
		as in the mass-(sub)critical case \cite{RSZ23},
		see Proposition \ref{prop3.2} below.
		In contrast,
		the latter two unstable directions
		gives rise to the main difficulty in the 
        soliton analysis in the supercritical case.
		In order to control these new unstable directions,  
		we will further modulate the initial data and apply topological arguments.
	\end{remark}

	The following result shows that the  eigenfunctions $Y^\pm$ decay exponentially fast at infinity. 
	
	\begin{lemma} [Exponential decay of eigenfunctions]  \label{Y}
		There exist $C, \delta>0$ such that
		\begin{equation}
			|Y^+(x)|+|Y^-(x)|\le Ce^{-\delta|x|},\ \ x\in\mathbb{R}^d.
			\label{p0}
		\end{equation}
	\end{lemma}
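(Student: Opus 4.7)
The plan is to exploit the coupling structure of $\mathscr{L}$ in order to reduce the eigenvalue problem for $Y^{\pm}$ to a scalar fourth order equation whose linear part has an exponentially decaying Green's function. Writing $Y^+=Y_1+iY_2$ with $Y_1,Y_2$ real valued (so that $Y^-=\overline{Y^+}=Y_1-iY_2$), the identity $\mathscr{L}Y^+=e_0Y^+$ together with the definition \eqref{1.81} of $\mathscr{L}$ yields the coupled real system
\begin{equation*}
    \mathscr{L}_+Y_1 = e_0 Y_2, \qquad \mathscr{L}_-Y_2 = -e_0 Y_1.
\end{equation*}
Applying $\mathscr{L}_-$ to the first equation and substituting the second gives the scalar equation $\mathscr{L}_-\mathscr{L}_+Y_1=-e_0^2 Y_1$, and an analogous one for $Y_2$. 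Expanding $\mathscr{L}_\pm=-\Delta+I-c_\pm Q^{p-1}$ with $c_+=p$, $c_-=1$, this can be recast as
\begin{equation*}
    \bigl[(-\Delta+I)^2+e_0^2\bigr]Y_1 \;=\; W Y_1,
\end{equation*}
where $W$ is a differential operator of order at most two whose coefficients are polynomial expressions in $Q^{p-1}$, $\nabla(Q^{p-1})$, $\Delta(Q^{p-1})$ and $Q^{2(p-1)}$, and therefore all decay exponentially at infinity by \eqref{1.7}.

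Next I would invert the elliptic operator on the left by means of the factorization
\begin{equation*}
    (-\Delta+I)^2+e_0^2 \;=\; \bigl((-\Delta+I)-ie_0\bigr)\bigl((-\Delta+I)+ie_0\bigr).
\end{equation*}
Each factor is a constant coefficient elliptic operator whose Fourier symbol $|\xi|^2+1\pm ie_0$ has real part bounded below by $1$ and complex zeros contained in the region $|\operatorname{Im}\xi|\ge\delta_0$ for some $\delta_0>0$ depending only on $e_0$ and $d$. By the standard Paley--Wiener/resolvent analysis used for the Yukawa kernel of $-\Delta+I$, each factor therefore admits an $L^1$ Green's function dominated pointwise by $Ce^{-\delta_0|x|}$; composing these kernels produces a Green's function $K\in L^1(\mathbb{R}^d)$ of $(-\Delta+I)^2+e_0^2$ satisfying $|K(x)|\le Ce^{-\delta_0|x|}$. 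This yields the integral representation $Y_1=K\ast(WY_1)$.

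The conclusion then follows from a routine bootstrap. By \eqref{Y-S-decay} we have $Y_1,Y_2\in\mathcal{S}(\mathbb{R}^d)$, so $Y_1$ together with its derivatives up to order two are bounded; combined with the exponential decay of the coefficients of $W$ inherited from \eqref{1.7}, this gives $|WY_1(x)|\le Ce^{-\delta_1|x|}$ for some $\delta_1>0$. Since the convolution of two exponentially decaying functions on $\mathbb{R}^d$ decays exponentially (at any rate strictly below $\min(\delta_0,\delta_1)$, the polynomial prefactor being absorbed by a small decrement of the exponent), one obtains $|Y_1(x)|=|(K\ast WY_1)(x)|\le Ce^{-\delta|x|}$. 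Repeating the argument for $Y_2$ and using $|Y^\pm|\le|Y_1|+|Y_2|$ yields \eqref{p0}. The main technical point in this scheme is the Green's function bound for $(-\Delta+I)^2+e_0^2$; once that is in hand the remaining steps are essentially bookkeeping. An alternative, more hands-on route would be an Agmon-type weighted $L^2$ energy estimate applied directly to the coupled pair $(Y_1,Y_2)$, multiplying the system by $e^{2\delta\langle x\rangle}(Y_1,Y_2)$ and absorbing the $Q^{p-1}$ contributions for $|x|$ large; the gradient and mass terms then supply the coercivity needed to close the weighted bound.
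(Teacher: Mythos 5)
Your argument is correct in substance but follows a genuinely different route from the paper. The paper works directly with the coupled system \eqref{p1}--\eqref{p2}: it first multiplies by $f_{\eta}Y_2$ and $f_{\eta}Y_1$ with the regularized weight $f_{\eta}(x)=e^{|x|/(1+\eta|x|)}$ to obtain the weighted $L^2$ bound $\int(Y_1^2+Y_2^2)e^{|x|}dx<\infty$ (this is precisely the Agmon-type weighted energy estimate you mention as an alternative at the end), and then upgrades this to the pointwise bound \eqref{p0} via a rather lengthy induction on the dimension using the fundamental theorem of calculus coordinate by coordinate. Your route instead eliminates $Y_2$ to get the scalar equation $\bigl[(-\Delta+I)^2+e_0^2\bigr]Y_1=WY_1$ and inverts the constant-coefficient operator by an exponentially localized kernel; since $Y_1\in\mathcal{S}$ makes $WY_1$ exponentially decaying, a single convolution estimate gives the pointwise decay directly, bypassing the paper's second step entirely. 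This is cleaner, at the price of needing the Green's function analysis for $(-\Delta+I)\pm ie_0$. Two small points to tighten: (a) for $d\ge 2$ the kernels of $\bigl((-\Delta+I)\pm ie_0\bigr)^{-1}$ are \emph{not} pointwise bounded by $Ce^{-\delta_0|x|}$ near the origin (they have the usual local singularity $|x|^{-(d-2)}$, resp.\ logarithmic); what you actually need and what is true is $e^{\delta|x|}K\in L^1$ for small $\delta$, which suffices for the convolution bound $|K\ast g(x)|\le Ce^{-\delta_1|x|}\int|K(y)|e^{\delta_1|y|}dy$; (b) when $p<2$ (possible for $d\ge 5$ in the supercritical range) the coefficients $\nabla(Q^{p-1})$, $\Delta(Q^{p-1})$ involve negative powers of $Q$, so their exponential decay requires the standard bound $|\nabla Q|\le CQ$ rather than \eqref{1.7} alone. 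Finally, note that your proposed alternative (the weighted energy estimate by itself) would only reproduce the paper's step (i) and would still require an extra argument, such as the paper's induction, to reach the pointwise statement \eqref{p0}.
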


	\begin{proof}
		The proof proceeds in two steps:
		we first prove that
		$Y^\pm$ are exponentially integrable,
		and then we show
		the pointwise exponential decay
		of $Y^\pm$.
		
		$(i)$
		Exponential integrability:
		Let $Y_1:=\text{Re}Y^+$,  $Y_2:=\text{Im}Y^+$. Let us first show that 
		\begin{align}\label{est-int}
			\int(Y_1^2+Y_2^2)e^{|x|}dx<\infty.
		\end{align}

		For this purpose,
		we let $f_{\eta}(x):=e^{\frac{|x|}{1+\eta|x|}}$, $\eta>0$, $x\in\mathbb{R}^d$. It is easy to check that $f_{\eta}$ is bounded, Lipschitz continuous  and satisfies  that 
		\begin{align}  \label{fve}
			|\nabla f_{\eta}(x)|\le f_{\eta}(x),\quad x\neq{\bf 0}.
		\end{align}
		
		By \eqref{1.8}, \eqref{1.81},  \eqref{1.10} and $\bar{Y}^+=Y^-$ one has
		\begin{align}
			\label{p1}&-\Delta Y_2+Y_2-Q^{p-1}Y_2=-e_0Y_1,\\
			\label{p2}&-\Delta Y_1+Y_1-pQ^{p-1}Y_1=e_0Y_2.
		\end{align}
		Taking the inner product of \eqref{p1} and \eqref{p2} with $f_{\eta}Y_2$ and $f_{\eta}Y_1$, respectively, and then summing up the results we obtain
		\begin{equation}
			\int\nabla Y_2\cdot\nabla(f_{\eta}Y_2)dx+\int\nabla Y_1\cdot\nabla(f_{\eta}Y_1)dx+\int f_{\eta}Y_2^2dx+\int f_{\eta}Y_1^2dx=\int Q^{p-1}\big(f_{\eta}Y_2^2+pf_{\eta}Y_1^2\big)dx.\label{p3}
		\end{equation}

		Next let us treat the left-hand side of \eqref{p3}. Using \eqref{fve} and H\"older's inequality one has
		\begin{align}
			\int\nabla Y_2\cdot\nabla(f_{\eta}Y_2)dx=&\int f_{\eta}|\nabla Y_2|^2dx+\int(\nabla f_{\eta}\cdot\nabla Y_2)Y_2dx\nonumber\\
			\ge&\frac{1}{2}\int f_{\eta}|\nabla Y_2|^2dx-\frac{1}{2}\int f_{\eta}Y_2^2dx.   \label{p4}
		\end{align}
		Similarly,  one has
		\begin{equation}
			\int\nabla Y_1\cdot\nabla(f_{\eta}Y_1)dx\ge\frac{1}{2}\int f_{\eta}|\nabla Y_1|^2dx-\frac{1}{2}\int f_{\eta}Y_1^2dx.\label{p5}
		\end{equation}
		Plugging \eqref{p4} and \eqref{p5} into \eqref{p3} we then derive that
		\begin{equation}
			\text{L.H.S. of \eqref{p3}}\ge\frac{1}{2}\int f_{\eta}(Y_1^2+Y_2^2)dx.
			\label{p6}
		\end{equation}
		
		Regarding the right-hand side of \eqref{p3}, by the exponential decay of the ground state \eqref{1.7},
		we see that   $pQ^{p-1}<1/4$
		for any $|x|>C_0$ with $C_0$ large enough.
		This yields that for some $C>0$ independent of $\eta$,
		\begin{align}
			\text{R.H.S. of \eqref{p3}}\le&\frac{1}{4}\int_{|x|>C_0}f_{\eta}(Y_1^2+Y_2^2)dx+\int_{|x|\le C_0}Q^{p-1}(f_{\eta}Y_2^2+pf_{\eta}Y_1^2)dx\nonumber\\
			\le&\frac{1}{4}\int f_{\eta}(Y_1^2+Y_2^2)dx+C.
			\label{p7}
		\end{align}
		
		Thus, combining \eqref{p6} and \eqref{p7} together we get 
		$$\int(Y_1^2+Y_2^2)f_{\eta}dx\le C. $$
		Letting $\eta\to0$ and using Fatou's lemma we obtain \eqref{est-int}.
		
		\medskip
		$(ii)$ Next, we shall prove the 
		pointwise exponential decay
		\begin{equation}\label{decay}
			\sup\limits_{x\in \mathbb{R}^d}(Y_1^{d+2}+Y_2^{d+2})e^{|x|}<\infty.
		\end{equation}
		It follows from the following more general claim.

		\medskip
		\paragraph{\bf Claim:}
		Let $d\ge1$.
		If $g\in\mathcal{S}(\mathbb{R}^d)$ is nonnegative 
		and  there exists $m_0\ge2$ 
        such that  
		\begin{equation}\label{int-beta}
			\int g^{m_0}(x)e^{|x|}dx <\infty,
		\end{equation}
		then 
		\begin{equation}\label{est-beta0+k}
			\sup\limits_{x\in \mathbb{R}^d} 
            g^{m_0+d}(x)e^{|x|} 
            <\infty. 
		\end{equation}
		Applying this claim to the case where
		$m_0=2$ and $g=Y_1$, $Y_2$ in \eqref{est-beta0+k} we thus obtain \eqref{decay}. 
		
		\medskip
		 It remains to prove the claim.
		For this purpose,
		we shall use the induction argument on dimensions.
		First when $d=1$,
		by the continuity of $g(x)$ and $e^{|x|}$,
		\begin{equation}\label{|x|<1}
			\sup\limits_{|x|\le1} 
			g^{m_0+1}(x)e^{|x|} 
			<\infty.
		\end{equation}
		Then, using \eqref{int-beta} and $g\in\mathcal{S}(\mathbb{R})$ we have 
		\begin{equation*}
			\sup\limits_{x>1}\int_1^{x}|(g^{m_0+1}(s)e^s)'|ds<\infty,\ \ {\rm and} \ \ \sup\limits_{y<-1}\int_{y}^{-1}|(g^{m_0+1}(s)e^{-s})'|ds<\infty,
		\end{equation*}
		which, via the fundamental theorem of calculus, yields that 
		\begin{equation}\label{x>1<-1}
			\sup\limits_{x>1}\Big|g^{m_0+1}(x)e^{|x|}-g^{m_0+1}(1)e\Big|<\infty\ \ {\rm and} \ \ \sup\limits_{y<-1}\Big|g^{m_0+1}(y)e^{|y|}-g^{m_0+1}(-1)e\Big|<\infty.
		\end{equation}
		It follows from \eqref{|x|<1} and \eqref{x>1<-1} that \eqref{est-beta0+k} is valid when $d=1$.

		\medskip
		Now, we assume that \eqref{int-beta} implies \eqref{est-beta0+k} for all $d\le k$
		and consider the case where $d=k+1$.
		
        Let $x=(x_1,x_2,\cdots,x_{k+1})\in\mathbb{R}^{k+1}$. Denote the cube $O^c_{k+1}:=[-1, 1]^{k+1}$ and  its complement $O_{k+1}:=\mathbb{R}^{k+1}\backslash O^c_{k+1}$.
		Using the continuity of $g(x)$ and $e^{|x|}$ again we have
		\begin{equation*}
			\sup\limits_{x\in O_{k+1}^c} g^{m_0+k+1}(x)e^{|x|}<\infty.
		\end{equation*}
		Hence, it suffices to prove
		\begin{equation}\label{est-omega-k+1}
			\sup\limits_{x\in O_{k+1}} 
			g^{m_0+k+1}(x)e^{|x|}<\infty.
		\end{equation}
		
		Let \begin{equation*}
			D_j:=\{(l_1,l_2,\cdots,l_j)\in \mathbb{Z}^j:1\le l_1<l_2<\cdots<l_j\le k+1\} \quad \text{for}\quad 1\le j\le k+1.
		\end{equation*}	
        It is obvious that
        $O_{k+1}$ is a union of the sets $O_{k+1}^0$ and $O_{k+1}^{(l_1,\cdots,l_j)}$, $\forall~ (l_1,\cdots,l_j)\in D_j$,
		where 
        \begin{equation*}
            O_{k+1}^0:=\{x\in O_{k+1}:x_m\ge0,1\le m\le k+1\},
        \end{equation*}
        \begin{equation*}
            O_{k+1}^{(l_1,\cdots,l_j)}:=\{x\in O_{k+1}:x_m\ge0,\ m\neq l_1,l_2,\cdots,l_j,\ {\rm and}\ x_{l_1},x_{l_2},\cdots,x_{l_j}\le0\}.
        \end{equation*}

        Thus, \eqref{est-omega-k+1} follows immediately from 
        \begin{equation}\label{est-omega-0}
            \sup\limits_{x\in O_{k+1}^0} 
			g^{m_0+k+1}(x)e^{|x|}<\infty,
        \end{equation}
        and
        \begin{equation}\label{est-omega-1j}
            \sup\limits_{x\in O_{k+1}^{(l_1,\cdots,l_j)}} 
			g^{m_0+k+1}(x)e^{|x|}<\infty,\ \ \forall\ (l_1,\cdots,l_j)\in D_j,\ 1\le j\le k+1.
        \end{equation}
        
        Below, we prove \eqref{est-omega-0}, and the proof of \eqref{est-omega-1j} is similar.
		For any $x\in\mathbb{R}^{k+1}$ and $(l_1,\cdots,l_j)\in D_j$,
		we set
		\begin{align*} 
			& x^{(l_1,\cdots,l_j)}:=(x_1,\cdots,x_{l_1-1},1,x_{l_1+1},\cdots,x_{l_2-1},1,x_{l_2+1},\cdots,x_{l_j-1},1,x_{l_j+1},\cdots,x_{k+1}), \notag \\ 
			& \bar{x} ^{(l_1,\cdots,l_j)}:=(x_1,\cdots,x_{l_1-1},0,x_{l_1+1},\cdots,x_{l_2-1},0,x_{l_2+1},\cdots,x_{l_j-1},0,x_{l_j+1},\cdots,x_{k+1}), \notag \\
			& dx^{(l_1,\cdots,l_j)}:=dx_1\cdots dx_{l_1-1}dx_{l_1+1}\cdots dx_{l_2-1}dx_{l_2+1}\cdots dx_{l_j-1}dx_{l_j+1}\cdots dx_{k+1}.
		\end{align*} 
		Note that 
		\begin{equation}\label{est-l1}
			|x^{(l_1,\cdots,l_j)}|\le|\bar{x}^{(l_1,\cdots,l_j)}|+j,\ \ |\bar{x}^{(l_1,\cdots,l_j)}|\le|x|,\ \ 1\le j\le k+1,
		\end{equation}
		and 
		\begin{equation}\label{est-lj}
		    |\bar{x}^{(l_1,\cdots,l_{j+1})}|\le|\bar{x}^{(l_1,\cdots,l_{j})}|,\ \ 1\le j\le k.
		\end{equation}

		Since $O_{k+1}^0$ does not contain the singular point ${\bf 0}\in\mathbb{R}^{k+1}$, by \eqref{int-beta} and 
        $g\in\mathcal{S}(\mathbb{R}^{k+1})$, there exists $C_{k+1}<\infty$ such that
		\begin{equation*}
				\int_{O_{k+1}^0}\Big|\partial_x^{\nu}\big(g^{m_0+k+1}(x)e^{|x|}\big)\Big|dx\le C_{k+1} 
		\end{equation*} 
		with $\nu=(1,1,\cdots,1)$ being a $(k+1)$-dimensional index. This along with \eqref{est-l1} and the fundamental theorem of calculus yields that for any $x\in O_{k+1}^0$,
		\begin{align}\label{est-I}
			&e^{|x|}g^{m_0+k+1}(x)\nonumber\\
            \le&\sum\limits_{l_1\in D_1}g^{m_0+k+1}(x^{(l_1)})e^{|x^{(l_1)}|}+\cdots+(-1)^{k+1}\sum\limits_{(l_1,\cdots,l_{k+1})\in D_{k+1}}g^{m_0+k+1}(x^{(l_1,\cdots,l_{k+1})})e^{|x^{(l_1,\cdots,l_{k+1})}|}+C_{k+1}\nonumber\\
			\le&\sum\limits_{l_1\in D_1}g^{m_0+k+1}(x^{(l_1)})e^{|\bar{x}^{(l_1)}|+1}+\cdots+\sum\limits_{(l_1,\cdots,l_{k+1})\in D_{k+1}}g^{m_0+k+1}(x^{(l_1,\cdots,l_{k+1})})e^{|\bar{x}^{(l_1,\cdots,l_{k+1})}|+k+1}+C_{k+1}\nonumber\\
			=&:\sum\limits_{j=1}^{k+1} I_j+C_{k+1}.
		\end{align}
		
		In order to obtain \eqref{est-omega-0},
		we need to show that $\sum_{j=1}^{k+1} I_j$ has a universal upper bound for all $x\in O_{k+1}^0$.
		In view of the induction when $d\le k$, we just need to prove that for any $(l_1,\cdots, l_j) \in D_j$   
		with $1\leq j\leq k$, 
		\begin{align}\label{sum-int}
			\int g^{m_0+j}(x^{(l_1,l_2,\cdots,l_j)})e^{|\bar{x}^{(l_1,l_2,\cdots,l_j)}|}dx^{(l_1,\cdots,l_j)}<\infty.
		\end{align}
		
		To this end,
		we proceed by a further induction argument on the index $j$ to obtain \eqref{sum-int}.
		
		First, in the case where $j=1$, by \eqref{int-beta} and \eqref{est-l1}, we have
		\begin{equation}
			\int g^{m_0+1}(x)e^{|\bar{x}^{(l_1)}|}dx
			\leq \int g^{m_0+1}(x)e^{|x|}dx 
			<\infty. 
		\end{equation} 
		In particular, there exists $x_{l_1}^*>1$ such that
		\begin{align} \label{j=0-j=1}
			\int g^{m_0+1}(x^{*,(l_1)})e^{|\bar{x}^{(l_1)}|}dx^{(l_1)}  <\infty, 
		\end{align}
		where $x^{*,(l_1)}=(x_1,\cdots,x_{l_1-1},x_{l_1}^*,x_{l_1+1},\cdots,x_{k+1})$.
		Moreover, using $g\in\mathcal{S}(\mathbb{R}^{k+1})$, \eqref{int-beta} and \eqref{est-l1} once more we obtain 		
			\begin{equation*}
				\Big|\int\partial_{x_{l_1}}\big(g^{m_0+1}(x)\big)e^{|\bar{x}^{(l_1)}|}dx\Big|\leq \int\Big|\partial_{x_{l_1}}\big(g^{m_0+1}(x)\big)\Big|e^{|x|}dx<\infty.
		\end{equation*}
		Then, integrating
		over $x^{(l_1)}$ in $\mathbb{R}^k$ and over the $x_{l_1}$-coordinate from $1$ to $x_{l_1}^*$, 
		we get that there exists $C^{(l_1)}<\infty$ such that
		\begin{equation*}
			\Big|\int g^{m_0+1}
			(x^{*,(l_1)})e^{|\bar{x}^{(l_1)}|}dx^{(l_1)}
			-\int g^{m_0+1}(x^{(l_1)})e^{|\bar{x}^{(l_1)}|}dx^{(l_1)}\Big| 
			\le C^{(l_1)}, 
		\end{equation*}
		which along with \eqref{j=0-j=1} yields that 
        \begin{equation*}
            \int g^{m_0+1}(x^{(l_1)})e^{|\bar{x}^{(l_1)}|}dx^{(l_1)}\le\int g^{m_0+1}(x^{*,(l_1)})e^{|\bar{x}^{(l_1)}|}dx^{(l_1)}+C^{(l_1)}<\infty.
        \end{equation*}
        Thus, \eqref{sum-int} holds when $j=1$.
		
		\medskip 
		Next, we assume that \eqref{sum-int} holds for $1\leq j\leq n\leq k-1$ 
		and consider the case where $j=n+1$.
		According to $g\in\mathcal{S}(\mathbb{R}^{k+1})$, \eqref{est-lj} and the induction hypothesis \eqref{sum-int} 
		when $j=n$, we have
		\begin{align}\label{j=n-j=n+1}
			&\int g^{m_0+n+1}(x^{(l_1,l_2,\cdots,l_n)})e^{|\bar{x}^{(l_1,l_2,\cdots,l_{n+1})}|}dx^{(l_1,l_2,\cdots,l_n)}\nonumber\\
			\leq &C 
			\int g^{m_0+n}(x^{(l_1,l_2,\cdots,l_n)})e^{|\bar{x}^{(l_1,l_2,\cdots,l_n)}|}dx^{(l_1,l_2,\cdots,l_n)}
			<\infty,
		\end{align}
		and 
		\begin{equation} \label{dg-j=n-j=n+1}
			\Big|\int\partial_{x_{l_{n+1}}}\big(g^{m_0+n+1}(x^{(l_1,l_2,\cdots,l_n)})\big)e^{|\bar{x}^{(l_1,l_2,\cdots,l_{n+1})}|}dx^{(l_1,l_2,\cdots,l_n)}\Big| <\infty. 
		\end{equation} 
		In particular, 
        the finiteness in \eqref{j=n-j=n+1} implies that  there exists $x_{l_{n+1}}^*>1$ such that 
		\begin{align} \label{j=n-j=n+1*}
			\int g^{m_0+n+1}(x^{*,(l_1,l_2,\cdots,l_{n+1})})e^{|\bar{x}^{(l_1,l_2,\cdots,l_{n+1})}|}dx^{(l_1,l_2,\cdots,l_{n+1})} <\infty,
		\end{align}
		where $x^{*,(l_1,l_2,\cdots,l_{n+1})}=(x_1,\cdots,x_{l_1-1},1,x_{l_1+1},\cdots,x_{l_{n+1}-1},x_{l_{n+1}}^*,x_{l_{n+1}+1},\cdots,x_{k+1})$.
		Then,  integrating the integrand 
		of \eqref{dg-j=n-j=n+1} 
		for $x^{(l_1,l_2,\cdots,l_{n+1})}$ 
        in $\mathbb{R}^{k-n}$ and for  the $x_{l_{n+1}}$-coordinate 
		from $1$ to $x_{l_{n+1}}^*$ 
		we get that there exists $C^{(l_1,l_2,\cdots,l_{n+1})}<\infty$ such that
		\begin{align}\label{j=n+1-int}
			\Big|&\int g^{m_0+n+1}(x^{*,(l_1,l_2,\cdots,l_{n+1})})e^{|\bar{x}^{(l_1,l_2,\cdots,l_{n+1})}|}dx^{(l_1,l_2,\cdots,l_{n+1})}\nonumber\\
			&-\int g^{m_0+n+1} ( x^{(l_1,l_2,\cdots,l_{n+1})}) e^{|\bar{x}^{(l_1,l_2,\cdots,l_{n+1})}|}dx^{(l_1,l_2,\cdots,l_{n+1})}\Big|
			\le C^{(l_1,l_2,\cdots,l_{n+1})}. 
		\end{align} 
		Hence, it follows from \eqref{j=n-j=n+1*} and \eqref{j=n+1-int} that 
        \begin{align*}
            &\int g^{m_0+n+1} (x^{(l_1,l_2,\cdots,l_{n+1})}) e^{|\bar{x}^{(l_1,l_2,\cdots,l_{n+1})}|}dx^{(l_1,l_2,\cdots,l_{n+1})}\\
            \le&\int g^{m_0+n+1}(x^{*,(l_1,l_2,\cdots,l_{n+1})})e^{|\bar{x}^{(l_1,l_2,\cdots,l_{n+1})}|}dx^{(l_1,l_2,\cdots,l_{n+1})}+C^{(l_1,l_2,\cdots,l_{n+1})}<\infty,
        \end{align*}
        which yields \eqref{sum-int} in the case where $j=n+1$.

		\medskip 
		Thus, by induction over 
		$j$, 
		we obtain  \eqref{sum-int} for any 
		$j \in \{1,\cdots, k\}$. 
		This in turn implies that $\sum_{j=1}^k I_j$ in \eqref{est-I} is uniformly bounded,
		and so, \eqref{est-omega-0} follows.
        Analogous arguments also lead to 
        \eqref{est-omega-1j} for the remaining regimes $O_{k+1}^{(l_1,\cdots,l_j)}$, $(l_1,\cdots,l_j)\in D_j$, $1\le j\le k+1$, 
        and thus yield \eqref{est-beta0+k} for $d=k+1$. 
        Therefore, 
        using the induction argument again,
        we prove the claim for any $d\ge1$ 
        and finish the proof.
	\end{proof}

	\subsection{Geometrical decomposition}\label{geo}

	In this subsection we derive the geometrical decomposition of solutions to the rescaled random NLS with modulated final data
	\begin{equation}
		\begin{cases}\text{$i$}\partial_tu+(\Delta+b_*\cdot\nabla+c_*)u+|u|^{p-1}u=0,
			\\[5pt]u(T)=R(T)+\text{$i$}\sum\limits_{k,\pm}b_k^{\pm}Y_k^{\pm}(T).
		\end{cases}
		\label{3.1}
	\end{equation}
	Here, $p\in (1+4/d, 1+4/(d-2)_+)$,  $T>0$ is sufficiently large,
	$b_*$ and $c_*$ are the random coefficients
	given by \eqref{1.30} and \eqref{1.31}, respectively.
	Moreover,
	\begin{equation}\label{sumRk}
		R=\sum_{k=1}^{K}R_k
	\end{equation}
	with the soliton profiles $R_k$ given by \eqref{1.11},
	$Y^\pm_k$ are the rescaled eigenfunctions
	of the form 
	\begin{equation}
		Y_{k}^\pm(t,x)=Y_{w_k}^\pm(y_k)e^{i(\frac{1}{2}v_k\cdot x-\frac{1}{4}|v_k|^2t+(w_k)^{-2}t+\theta_k^0)},
		\label{3.2}
	\end{equation}
	with
	\begin{equation}\label{def-Ywk}
		Y_{w_k}^\pm(y_k)=(w_k)^{-\frac{2}{p-1}}Y^\pm\Big(\frac{y_k}{{w_k}}\Big),\quad{\rm and}\quad y_k = x-v_kt-\alpha_k^0,
	\end{equation}
	and $Y^{\pm}$ defined in \eqref{1.10}. 
    We also denote the vector of the perturbation parameters in the initial data by
	\begin{align}\label{def-b}
	    \boldsymbol{b}:=(b_1^+,\cdots,b_K^+,b_1^-, \cdots,b_K^-)\in\mathbb{R}^{2K}.
	\end{align}

	\begin{remark}\label{rmk31}
		We note that, 
		unlike equation (3.1) of \cite{RSZ23}, 
		the rescaled random NLS \eqref{3.1} 
		has the additional modulated term $i\sum_{k,\pm}b_k^{\pm}Y_k^{\pm}(T)$ in the final condition. 
		It is introduced mainly to control the extra unstable directions of the linearized 
        Schr\"odinger operator $\mathscr{L}$ in the mass-supercritical case.
	\end{remark}

	For convenience, we set $\mathcal{P}_k:=(\alpha_k,\theta_k)\in\mathbb{Y}:=\mathbb{R}^d\times\mathbb{R}$, $1\le k\le K$, and $\mathcal{P}:=(\mathcal{P}_1,\mathcal{P}_2,\cdots,\mathcal{P}_K)\in\mathbb{Y}^K$.

	\begin{proposition}[Geometrical decomposition]\label{prop3.2}
		Let $u$ be a local solution to  equation \eqref{3.1}.
		Then, there exist deterministic constants $M$, $\eta>0$, such that the following holds: 
        
		For any $T\ge M$ and any $\boldsymbol{b}\in B_{\mathbb{R}^{2K}}(\eta)$, there exist $T^*\in (0,T)$ and unique modulation parameters $\mathcal{P}\in{C}^1\left([T^*,T];\mathbb{Y}^K\right)$, such that $u$ admits the unique geometrical decomposition
		\begin{equation}
			u(t,x)=\sum\limits_{k=1}^{K}\widetilde{R}_k(t,x)+\varepsilon(t,x)\ (=:\widetilde{R}(t,x)+\varepsilon(t,x)),
			\label{3.3}
		\end{equation}
		with the modulated soliton profile given by 
		\begin{equation}
			\widetilde{R}_k(t,x):=Q_{w_k}\left(x-v_kt-\alpha_k(t)\right)e^{\text{$i$}\left(\frac{1}{2}v_k\cdot{x}-\frac{1}{4}|v_k|^2t+(w_k)^{-2}t+\theta_k(t)\right)},
			\label{3.4}
		\end{equation}
		and the following orthogonality conditions hold on $[T^*,T]:$
		\begin{equation}
			{\rm Re} \int\nabla\widetilde{R}_k(t)\bar{\varepsilon}(t)dx=0,
			\quad
			{\rm Im} \int\widetilde{R}_k(t)\bar{\varepsilon}(t)dx=0,
			\quad \forall1\le{k}\le{K}.
			\label{3.5}
		\end{equation}
		Moreover, the value of modulation parameters $(\alpha_k,\theta_k)$ and remainder $\varepsilon$ at time $T$ satisfy
		\begin{equation}
			\Vert\varepsilon(T)\Vert_{H^1}+\sum\limits_{k=1}^{K}\left(|\alpha_k(T)-\alpha_k^0|+|\theta_k(T)-\theta_k^0|\right)\le C|\boldsymbol{b}|,
			\label{time T}
		\end{equation}
		for some deterministic positive constant $C$ independent of $T$.
	\end{proposition}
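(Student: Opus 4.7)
The plan is to construct the decomposition via the implicit function theorem (IFT) applied first at the final time $T$, and then to extend it backward in time by continuity, using the uniqueness provided by the IFT. Accordingly, for $t\ge M$ fixed, I would introduce the $K(d+1)$-valued functional
\begin{equation*}
  F(t,\mathcal{P}, v):=\Bigl({\rm Re}\!\int\! \nabla \widetilde{R}_k[\mathcal{P}_k](t)\,\overline{\bigl(v-\widetilde{R}[\mathcal{P}](t)\bigr)}\,dx,\ {\rm Im}\!\int\! \widetilde{R}_k[\mathcal{P}_k](t)\,\overline{\bigl(v-\widetilde{R}[\mathcal{P}](t)\bigr)}\,dx\Bigr)_{1\le k\le K},
\end{equation*}
where $\widetilde{R}_k[\mathcal{P}_k]$ is defined by \eqref{3.4} and $\widetilde{R}[\mathcal{P}]=\sum_k \widetilde{R}_k[\mathcal{P}_k]$. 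At the unperturbed configuration $(\mathcal{P}^0,R(t))$ with $\mathcal{P}^0=(\alpha_k^0,\theta_k^0)_k$ one has $F(t,\mathcal{P}^0,R(t))=0$.

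The main point is to verify that the Jacobian $D_\mathcal{P} F(t,\mathcal{P}^0,R(t))$ is uniformly invertible for $t\ge M$ with $M$ deterministic and sufficiently large. Using $\partial_{\alpha_{k,j}}\widetilde{R}_k=-\partial_{x_j}\widetilde{R}_k$ and $\partial_{\theta_k}\widetilde{R}_k=\imu \widetilde{R}_k$, the diagonal $k=k'$ blocks reduce to
\begin{equation*}
  \begin{pmatrix} \delta_{jj'}\|\partial_{y_j}Q_{w_k}\|_{L^2}^2 & \tfrac12 v_{k,j'}\|Q_{w_k}\|_{L^2}^2 \\ -\tfrac12 v_{k,j}\|Q_{w_k}\|_{L^2}^2 & \|Q_{w_k}\|_{L^2}^2 \end{pmatrix},
\end{equation*}
after the phase cancellations, which is invertible (symmetric positive-definite part plus a skew part). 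For $k\ne k'$, the soliton centers $v_k t+\alpha_k^0$ and $v_{k'} t+\alpha_{k'}^0$ separate like $|v_k-v_{k'}|t$ because the velocities are distinct, so the exponential decay \eqref{1.7} of $Q$ forces the off-diagonal blocks to be $O(e^{-\delta t})$. Thus for $t$ larger than a deterministic $M>0$, the full Jacobian is a perturbation of the block-diagonal invertible matrix, and IFT yields a deterministic neighborhood of $R(t)$ in $H^1$ on which the orthogonality system \eqref{3.5} admits a unique $C^1$ solution $\mathcal{P}$. Quantitatively, the radius of this neighborhood and the Lipschitz constant of $v\mapsto \mathcal{P}(v)$ can be taken independent of $t\ge M$, which gives a deterministic $\eta>0$.

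Applied at $t=T$ to $u(T)=R(T)+\imu\sum_{k,\pm}b_k^\pm Y_k^\pm(T)$, this produces $\mathcal{P}(T)\in \mathbb{Y}^K$ and $\varepsilon(T):=u(T)-\widetilde{R}[\mathcal{P}(T)](T)$ satisfying \eqref{3.5} at $t=T$, provided $|\boldsymbol{b}|\le \eta$. The quantitative IFT together with $\|\imu\sum_{k,\pm}b_k^\pm Y_k^\pm(T)\|_{H^1}\lesssim |\boldsymbol{b}|$ (using $Y^\pm\in\mathcal{S}(\mathbb{R}^d)$) gives the bound \eqref{time T}. Finally, to extend the decomposition to an interval $[T^*,T]$, I would observe that $u\in C([T^*,T];H^1)$ for some $T^*<T$ by the local well-posedness of \eqref{RNLS}, and apply the same IFT at each $t$ close enough to $T$; the implicit function then yields a $C^1$ curve $t\mapsto \mathcal{P}(t)$, and uniqueness of this curve (local in the parameter space) glues to give a unique $\mathcal{P}\in C^1([T^*,T];\mathbb{Y}^K)$.

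The main technical obstacle I expect is ensuring that $M$ and $\eta$ can be chosen \emph{deterministic}, since this will later allow one to work below a large random time without loss of estimates. This reduces to showing that the IFT constants depend only on the fixed parameters $(w_k,v_k,\alpha_k^0,\theta_k^0)_k$ and $Q$, which in turn follows from the deterministic lower bounds for the diagonal blocks and the uniform exponential decay of the off-diagonal blocks in $t\ge M$. A secondary but routine check is that the phase cancellations used to simplify the Jacobian do not leave residual terms that would spoil the invertibility when the cross-soliton contributions (which are uniformly small in $t\ge M$) are added back.
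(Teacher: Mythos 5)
Your proposal is correct and follows essentially the same route as the paper: the paper reduces Proposition \ref{prop3.2} to a static modulation lemma (Lemma \ref{lem6.3}, whose omitted proof is exactly the quantitative implicit-function-theorem argument you spell out, with the same diagonal-block Jacobian computation and exponentially small off-diagonal blocks from the separation of soliton centers), then applies it at $t=T$ using $\|Y_k^\pm(T)\|_{H^1}\le C$ to get smallness of $u(T)-R(T)$, and extends to $[T^*,T]$ by continuity of the flow. The only difference is presentational: you prove the modulation lemma inline, while the paper cites it and imports the constants $\delta_*$ and $C$ from \eqref{6.48}.
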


	\begin{remark}\label{rmk3.3}
		(i)	The orthogonality conditions in (\ref{3.5}) allow to control the first two unstable directions 
        arising from the coercive property \eqref{6.1} of the linearized Schr\"odinger operator.
		The remaining two unstable directions will be controlled in Proposition \ref{prop3.5} and Proposition \ref{prop3.6} below by selecting an appropriate vector $\boldsymbol{b}$ in the final condition of equation \eqref{3.1}.
		
		(ii) It is worth noting that the control of $\varepsilon(T), \alpha_k(T)$ and $\theta_k(T)$ in \eqref{time T} is used to obtain the coercivity estimate of $\|\varepsilon(t)\|_{H^1}$ in Proposition \ref{coe},
		as well as the bootstrap estimates of $\sum_{k=1}^K(|\alpha_k(t)-\alpha_k^0|+|\theta_k(t)-\theta_k^0|)$ in Proposition \ref{prop5.2}.
		Moreover,
		estimate \eqref{time T} is also used to
		derive the a-priori estimates \eqref{3.8} and \eqref{3.9} on $[T^*,T]$,
		which guarantee that all constants  
        appearing in the estimates of modulation equations and functionals are deterministic and uniformly bounded.
	\end{remark}

	In order to prove Proposition \ref{prop3.2}, 
	let us first present  the following result. 
    It can be proved in an  analogous manner as in the proof of \cite[Lemma 6.4]{RSZ23}.
	Thus, the proof is omitted here.
	
	\begin{lemma}\label{lem6.3}
		Given any $L,w_k\in\mathbb{R}^+$, $\alpha_k^0$, $v_k\in\mathbb{R}^d$, $\theta_k^0\in\mathbb{R}$, $1\le k\le K$. Set
		\begin{equation}
			R_L(x):=\sum\limits_{k=1}^{K}R_{k,L}(x)=\sum\limits_{k=1}^{K}Q_{w_k}(x-v_kL-\alpha_k^0)e^{i(\frac{1}{2}v_k\cdot x-\frac{1}{4}|v_k|^2L+\left(w_k\right)^{-2}L+\theta_k^0)}.
			\label{6.45}
		\end{equation}
		Then, there exists a deterministic small constant $\delta_*>0$ such that the following holds:
		
		For any $0<r,L^{-1}<\delta_*$ and for any $u\in H^1(\mathbb{R}^d)$ satisfying $\Vert u-R_L\Vert_{H^1}\le r$, there exist a unique $C^1$ function $\mathcal{P}(u)=(\alpha,\theta)$: $H^1\to \mathbb{Y}^K$ with $\alpha:=(\alpha_1,\alpha_2,\cdots,\alpha_K)$ 
		and $\theta:=(\theta_1,\theta_2,\cdots,\theta_K)$,
		such that u admits the decomposition
		\begin{align}
			u=&\sum\limits_{k=1}^{K}Q_{w_k}(x-v_kL-\alpha_k)e^{i(\frac{1}{2}v_k\cdot x-\frac{1}{4}|v_k|^2L+\left(w_k\right)^{-2}L+\theta_k)}+\varepsilon_L\nonumber\\
			=&:\sum\limits_{k=1}^{K}\widetilde{R}_{k,L}+\varepsilon_L,
			\label{6.46}
		\end{align}
		and $\widetilde{R}_{k,L}$, 
        $\varepsilon_L$ satisfy the  orthogonality conditions:
		\begin{equation}
			\text{Re} \int\nabla\widetilde{R}_{k,L} \bar\varepsilon_Ldx={\bf 0},\quad \text{Im}\int\widetilde{R}_{k,L}\bar\varepsilon_Ldx=0, \ \ 1\le k\le K.
			\label{6.47}
		\end{equation}
		Moreover, there exists a deterministic constant $C>0$ such that
		\begin{equation}
			\Vert\varepsilon_L\Vert_{H^1}+\sum\limits_{k=1}^{K}\left(|\alpha_k-\alpha_k^0|+|\theta_k-\theta_k^0|\right)\le C\Vert u-R_L\Vert_{H^1}.
			\label{6.48}
		\end{equation}
	\end{lemma}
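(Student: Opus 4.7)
The plan is to apply the implicit function theorem to the map whose zero set encodes the orthogonality conditions \eqref{6.47}. Concretely, for $\mathcal{P}=(\alpha,\theta)\in\mathbb{Y}^K$ near $\mathcal{P}^0:=(\alpha^0,\theta^0)$ and $u\in H^1$ near $R_L$, I would define
\begin{equation*}
F(\mathcal{P},u):=\Big(\mathrm{Re}\!\int\!\nabla\widetilde{R}_{k,L}(\mathcal{P})\,\overline{\varepsilon_L(\mathcal{P},u)}\,dx,\ \mathrm{Im}\!\int\!\widetilde{R}_{k,L}(\mathcal{P})\,\overline{\varepsilon_L(\mathcal{P},u)}\,dx\Big)_{k=1}^{K},
\end{equation*}
where $\widetilde{R}_{k,L}$ is the modulated profile as in \eqref{6.46} and $\varepsilon_L(\mathcal{P},u):=u-\sum_{k=1}^K\widetilde{R}_{k,L}(\mathcal{P})$. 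Since $Q_{w_k}$ is Schwartz and $u\mapsto \widetilde R_{k,L}$ depends smoothly on $\mathcal{P}$, $F$ is a $C^1$ map from $\mathbb{Y}^K\times H^1$ into $\mathbb{R}^{(d+1)K}$, with $F(\mathcal{P}^0,R_L)=0$ by direct inspection (the modulated profile equals $R_{k,L}$ and the remainder vanishes).

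The main step is to verify that the Jacobian $D_\mathcal{P}F(\mathcal{P}^0,R_L)$ is invertible, uniformly for $L^{-1}$ small. Computing the derivatives in $\alpha_k$ and $\theta_k$, the diagonal $(d+1)\times(d+1)$ blocks produce the non-degenerate single-soliton matrix with entries of the form $\int |\partial_{x_j}Q_{w_k}|^2\,dx$ and $\int |Q_{w_k}|^2\,dx$ (times nonzero constants), while the off-diagonal blocks involve cross integrals between $\widetilde R_{k,L}$ and $\widetilde R_{k',L}$ with $k\neq k'$. Here the assumption that the velocities $v_k$ are pairwise distinct is crucial: the centers $v_kL+\alpha_k^0$ separate linearly in $L$, and the exponential decay \eqref{1.7} of the ground state forces these cross terms to be bounded by $e^{-c L}$ for some deterministic $c>0$. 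Thus for $L^{-1}<\delta_*$ small enough, $D_\mathcal{P}F(\mathcal{P}^0,R_L)$ is block-diagonal up to an exponentially small perturbation and therefore invertible, with an inverse bounded uniformly in $L$.

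With this non-degeneracy in hand, the implicit function theorem yields a unique $C^1$ map $\mathcal{P}(\cdot):B_{H^1}(R_L,r)\to \mathbb{Y}^K$ on a ball of radius $r<\delta_*$ (independent of $L$) such that $F(\mathcal{P}(u),u)=0$, which gives the decomposition \eqref{6.46}--\eqref{6.47}. The Lipschitz bound \eqref{6.48} follows from the standard quantitative form of the implicit function theorem: writing $\mathcal{P}(u)-\mathcal{P}^0=-[D_\mathcal{P}F(\mathcal{P}^0,R_L)]^{-1}D_uF(\mathcal{P}^0,R_L).(u-R_L)+o(\|u-R_L\|_{H^1})$, the uniform bound on the inverse combined with $\|D_uF\|\lesssim \|\nabla Q_{w_k}\|_{H^1}+\|Q_{w_k}\|_{H^1}$ gives $\sum_k(|\alpha_k-\alpha_k^0|+|\theta_k-\theta_k^0|)\lesssim \|u-R_L\|_{H^1}$, whence $\|\varepsilon_L\|_{H^1}\le \|u-R_L\|_{H^1}+\sum_k\|\widetilde R_{k,L}-R_{k,L}\|_{H^1}\lesssim \|u-R_L\|_{H^1}$ by smoothness of the parametrization.

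The main obstacle is controlling the off-diagonal interaction blocks in the Jacobian uniformly in $L$; once the distinct-velocity hypothesis is exploited via \eqref{1.7} to show exponential smallness in $L$, the rest is a routine (but careful) application of the implicit function theorem, and all constants can be made deterministic since the construction depends only on the fixed parameters $w_k,v_k,\alpha_k^0,\theta_k^0$ and on $Q$.
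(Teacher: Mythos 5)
Your proposal is correct and follows essentially the approach the paper intends: the paper omits the proof, referring to the analogous Lemma 6.4 of \cite{RSZ23}, which is precisely the implicit-function-theorem modulation argument you give, with the invertibility of the Jacobian secured by the single-soliton block structure (cf.\ the computation \eqref{6.57} used later in Proposition \ref{prop3.5}) and the exponential decoupling of distinct solitons via \eqref{1.7} and Lemma \ref{lem6.2}. Your handling of the uniformity in $L$ and the derivation of \eqref{6.48} from the quantitative form of the implicit function theorem are exactly what is needed.
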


	Now, Proposition \ref{prop3.2} 
	follows easily from Lemma \ref{lem6.3}. 
	
	\begin{proof}[Proof of Proposition \ref{prop3.2}]
	   Let $\delta_*$ be as in Lemma \ref{lem6.3} and $M=2 \delta_*^{-1}$.
		For any $T\ge M$,
		using \eqref{Y-S-decay} and the explicit expression \eqref{3.2} 
        we estimate that for every $1\le k\le K$,
		\begin{equation}\label{bound-Yk}
			\|Y_k^\pm(T)\|_{H^1}\le C(\|Y^\pm\|_{L^2}+\|\nabla Y^\pm\|_{L^2})\le C,
		\end{equation}
		which yields that for any $\boldsymbol{b}\in B_{\mathbb{R}^{2K}}(\eta)$ 
        with $\eta$ sufficiently small, 
		$\|u(T)-R(T)\|_{H^1}\le \delta_*/2$.
		Then, by the local well posedness theory there exists $T^*(\geq \delta_*^{-1})$
		close to $T$,
		such that $u(t)\in  C([T^*,T];H^1)$ and $\|u(t)-R(t)\|_{H^1}\le\delta_*$ for any $t\in[T^*,T]$.
		
		Thus, by virtue of Lemma \ref{lem6.3},
		there exist $C^1$ functions $(\alpha_k(t),\theta_k(t))\in C^1([T^*,T];\mathbb{Y})$, $1\le k\le K$,
		such that for any $t\in [T^*,T]$, $u(t)$ admits the decomposition (\ref{6.46}), 
        and the orthogonality conditions in (\ref{6.47}) hold with $t$ replacing $L$, which verify \eqref{3.4} and \eqref{3.5}.
		At last, \eqref{time T} follows from \eqref{6.48} and \eqref{bound-Yk}.
	\end{proof}

	\subsection{Modulated final data}\label{mfd}
	Let $(\alpha_k,\theta_k)\in C^1([T^*,T];\mathbb{Y})$ and $\varepsilon$ be remainder from Proposition \ref{prop3.2}. Set
	\begin{equation}
		\widetilde{Y}_k^\pm(t,x):=Y_{w_k}^\pm(y_k(t,x))e^{i\Phi_k(t,x)},  \label{3.49}
	\end{equation}
	with
	\begin{align}\label{y2.50}
		y_k(t,x) := x-v_kt-\alpha_k(t)
	\end{align}
	and
	the phase function
	\begin{equation}
		\Phi_k(t,x):=\frac{1}{2}v_k\cdot{x}-\frac{1}{4}|v_k|^2t+(w_k)^{-2}t+\theta_k(t).
		\label{3.12}
	\end{equation}
	Let
	\begin{equation}  \mathbf{a}^\pm(t):=\left(a_k^\pm(t)\right)_{1\le k\le K}
		\quad \text{with}\ \
		a_k^\pm(t):={\rm Im}\int\widetilde{Y}_k^{\pm}(t,x)\bar{\varepsilon}(t,x)dx.
		\label{3.50}
	\end{equation}

 The following result permits to steer the unstable 
 directions to prescribed vectors 
 at the final time. 
	
	\begin{proposition}[Modulated final data]\label{prop3.5} 
	There exist deterministic positive constants $M$, 
    $r>0$,   
	such that for any $T\ge M$ 
    and any $\boldsymbol{a}^-\in{B}_{\mathbb{R}^K} (r)$, 
        there exists a unique $\boldsymbol{b}\in B_{\mathbb{R}^{2K}}(\eta)$,  
        where $\eta$ is a deterministic positive constant depending on $r$, 
        such that 
        $\boldsymbol{b}$
        depends 
        continuously on $\boldsymbol{a}^-$,  
		$|\boldsymbol{b}|\le{C}|\boldsymbol{a}^-|$  
		and 
		\begin{equation}
			\mathbf{a}^+(T)={\bf 0} \quad
			{\rm and}\ \ \mathbf{a}^-(T)=\boldsymbol{a}^-, 
			\label{3.52}
		\end{equation} 
    where $C$ is a deterministic positive constant.
	\end{proposition}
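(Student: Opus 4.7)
The plan is to apply the inverse function theorem to the map $F\colon\boldsymbol{b}\mapsto(\mathbf{a}^{+}(T),\mathbf{a}^{-}(T))\in\mathbb{R}^{2K}$, and then set $\boldsymbol{b}:=F^{-1}(\mathbf{0},\boldsymbol{a}^{-})$. Since $F$ depends only on $u(T)$ and not on the random coefficients $b_{\ast},c_{\ast}$ of \eqref{3.1} (which govern only the evolution on $[T^{\ast},T]$), every constant appearing below will automatically be deterministic. For $|\boldsymbol{b}|\leq\eta$ with $\eta$ sufficiently small (depending on $\|Y^{\pm}\|_{H^{1}}$ and the constant $\delta_{\ast}$ from Lemma \ref{lem6.3}), the datum $u(T)=R(T)+i\sum_{k,\pm}b_{k}^{\pm}Y_{k}^{\pm}(T)$ lies within $H^{1}$-distance $\delta_{\ast}/2$ of $R(T)$, so Lemma \ref{lem6.3} applied at $L=T$ yields a $C^{1}$ map $\boldsymbol{b}\mapsto(\alpha(T),\theta(T),\varepsilon(T))$, hence a $C^{1}$ map $F$, with $F(\mathbf{0})=\mathbf{0}$ coming from the uniqueness of the decomposition when $u(T)=R(T)$.

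The main technical task is the computation and inversion of $dF(\mathbf{0})$. Differentiating $u(T)=\sum_{k}\widetilde R_{k}(T)+\varepsilon(T)$ at $\boldsymbol{b}=\mathbf{0}$, and using $\partial_{b_{k'}^{\pm'}}u(T)=iY_{k'}^{\pm'}(T)$ together with $\varepsilon(T)=0$ when $\boldsymbol{b}=\mathbf{0}$, shows that $\partial_{b_{k'}^{\pm'}}\varepsilon(T)$ at $\boldsymbol{b}=\mathbf{0}$ equals $iY_{k'}^{\pm'}(T)$ minus a linear combination of the modulation modes $\partial_{\alpha_{k}}\widetilde R_{k}(T)$ and $\partial_{\theta_{k}}\widetilde R_{k}(T)=iR_{k}(T)$, with real coefficients $\tilde\alpha_{k},\tilde\theta_{k}$ fixed by the linearized orthogonality conditions from \eqref{3.5}. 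Setting $\omega(f,g):=\operatorname{Im}\int f\bar g\,dx$, one then has $\partial_{b_{k'}^{\pm'}}a_{k}^{\pm}=\omega(Y_{k}^{\pm}(T),\partial_{b_{k'}^{\pm'}}\varepsilon(T))$ at $\boldsymbol{b}=\mathbf{0}$. A direct computation from \eqref{1.81} and the self-adjointness of $\mathscr{L}_{\pm}$ shows that $\mathscr{L}$ is skew-adjoint with respect to $\omega$; since the eigenfunctions $Y^{\pm}$ have nonzero eigenvalues $\pm e_{0}$ (from \eqref{1.10}), while $\nabla Q$ and $iQ$ lie in the kernel of $\mathscr{L}$ (by differentiating \eqref{1.6}), this forces $\omega(Y^{\pm},\nabla Q)=\omega(Y^{\pm},iQ)=0$. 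Transferring these identities through the rescaling and the phase factor $e^{i\Phi_{k}}$, both of which preserve $\omega$, yields $\omega(Y_{k}^{\pm}(T),\partial_{\alpha_{k}}\widetilde R_{k}(T))=\omega(Y_{k}^{\pm}(T),iR_{k}(T))=0$, so the diagonal modulation corrections vanish identically. The remaining contributions with $k''\neq k$ and the cross pairings $\omega(Y_{k}^{\pm}(T),iY_{k'}^{\pm'}(T))$ for $k\neq k'$ are of order $e^{-\delta T}$ by the exponential decay of $Q$ in \eqref{1.7}, of $Y^{\pm}$ in Lemma \ref{Y}, and the spatial separation $|v_{k}-v_{k'}|T$. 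Consequently $dF(\mathbf{0})$ is block-diagonal modulo $O(e^{-\delta T})$, with each $2\times 2$ diagonal block equal to $-\bigl(\begin{smallmatrix}\gamma_{k}&\operatorname{Re}c_{k}\\\operatorname{Re}c_{k}&\gamma_{k}\end{smallmatrix}\bigr)$, where $\gamma_{k}:=\|Y_{k}^{+}(T)\|_{L^{2}}^{2}>0$ and $c_{k}:=\int(Y_{k}^{+}(T))^{2}\,dx$.

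The hard step is to verify the uniform non-degeneracy $\gamma_{k}^{2}-(\operatorname{Re}c_{k})^{2}\geq c_{0}>0$ of each diagonal block. Cauchy--Schwarz gives $|c_{k}|\leq\gamma_{k}$, with equality only when $Y^{+}$ equals a unimodular constant times a real function, i.e.\ when $Y_{1}:=\operatorname{Re}Y^{+}$ and $Y_{2}:=\operatorname{Im}Y^{+}$ are linearly dependent. The coupled spectral equations $\mathscr{L}_{+}Y_{1}=e_{0}Y_{2}$ and $\mathscr{L}_{-}Y_{2}=-e_{0}Y_{1}$ extracted from \eqref{1.10} would then make $Y_{1}$ a simultaneous $L^{2}$-eigenfunction of both $\mathscr{L}_{+}$ and $\mathscr{L}_{-}$; since the only $L^{2}$-discrete eigenfunction of $\mathscr{L}_{-}$ is $Q$ (with eigenvalue $0$), this would force $Y_{1}\propto Q$ and therefore $\mathscr{L}_{+}Q\propto Q$, in contradiction with $\mathscr{L}_{+}Q=-(p-1)Q^{p}$. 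Hence strict non-degeneracy holds, and choosing $M$ large enough to absorb the $O(e^{-\delta T})$ off-block error, $dF(\mathbf{0})$ is uniformly invertible for $T\geq M$. The quantitative inverse function theorem then produces deterministic constants $r,\eta,C>0$ such that for every $\boldsymbol{a}^{-}\in B_{\mathbb{R}^{K}}(r)$ there exists a unique $\boldsymbol{b}=F^{-1}(\mathbf{0},\boldsymbol{a}^{-})\in B_{\mathbb{R}^{2K}}(\eta)$, depending continuously on $\boldsymbol{a}^{-}$ and satisfying $|\boldsymbol{b}|\leq C|\boldsymbol{a}^{-}|$.
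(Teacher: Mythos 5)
Your proposal follows essentially the same route as the paper's proof: compose the perturbation map $\boldsymbol{b}\mapsto i\sum b_k^\pm Y_k^\pm(T)$ with the decomposition map of Lemma \ref{lem6.3} and the map extracting $(\mathbf{a}^+,\mathbf{a}^-)$, compute the Jacobian, kill the diagonal modulation corrections via the symplectic orthogonality relations $\operatorname{Re}\int QY^{\pm}dx=0$ and $\operatorname{Im}\int\nabla Q\,Y^{\pm}dx={\bf 0}$ (the paper's \eqref{RE} and \eqref{IM}), decouple distinct solitons by exponential decay, and invert the resulting block matrix. The paper additionally verifies by a mean-value argument that the image contains a deterministic ball uniformly in $T$ and $\omega$; your appeal to a quantitative inverse function theorem subsumes this, provided you also note that $dF$ has a $T$-uniform Lipschitz bound near $\boldsymbol{0}$ (which it does, since everything is built from translates and rescalings of $Q$ and $Y^\pm$).

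Two points in your Jacobian analysis need repair. First, the off-diagonal entry is not $\operatorname{Re}\int(Y_k^+(T))^2dx$: the pairing $\omega(Y_k^+(T),iY_k^-(T))$ produces $-\operatorname{Re}\int Y_k^+\overline{Y_k^-}\,dx$, and since $\overline{Y^+}=Y^-$ the phases $e^{\pm i\Phi_k}$ cancel, giving $-w_k^{d-\frac{4}{p-1}}\int(Y_1^2-Y_2^2)dx$; by contrast $\int(Y_k^+(T))^2dx$ retains the oscillatory factor $e^{2i\Phi_k}$ and is a different quantity. Second, your non-degeneracy argument leans on the claim that $Q$ is the only $L^2$ discrete eigenfunction of $\mathscr{L}_-$, which is neither established in the paper nor needed. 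Because the block involves only the real part, the determinant of the $k$-th block equals $\gamma_k^2\bigl(1-(\int Y_1^2-Y_2^2\,dx)^2\bigr)=4\gamma_k^2\int Y_1^2dx\int Y_2^2dx$ (using $\|Y^+\|_{L^2}=1$), so degeneracy occurs only if $Y_1\equiv 0$ or $Y_2\equiv 0$, and either case is excluded immediately by the coupled equations $\mathscr{L}_+Y_1=e_0Y_2$, $\mathscr{L}_-Y_2=-e_0Y_1$ together with $e_0\neq 0$ and the normalization. This is exactly the paper's observation $y_*^2=1-4\int Y_1^2dx\int Y_2^2dx<1$. With these two corrections your argument is complete and matches the paper's.
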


	\medskip
	Before proving Proposition \ref{prop3.5}, let us first introduce the following decoupling lemma 
    that helps to decouple different soliton profiles and eigenfunctions $Y^\pm$,
	thanks to the exponential decay results
	\eqref{1.7} and Lemma \ref{Y}.
	Its proof follows in an analogous manner as in the proof of Lemma 6.3 in \cite{RSZ23}.

	\begin{lemma}[Decoupling lemma]\label{lem6.2}
		Let $\delta_0$ be the minimum of the positive constants $\delta$ in
		\eqref{1.7} and \eqref{p0}.
		Assume that $g_i\in C^2_b$, 
		$i=1,2$, satisfy 
		\begin{equation}
			|g_i(x)|\le C_1e^{-\delta_0|x|},~~x\in \mathbb{R}^d,~~i=1,2
			\label{6.38}
		\end{equation}
		for some positive constant $C_1$.
		For every $1\le k\le K$, let
		\begin{equation}
			G_{i,k}(t,x):=\left(w_k\right)^{-\frac{2}{p-1}}g_i\left(\frac{x-v_kt-\alpha_k}{w_k}\right),~~i=1,2,
			\label{6.36}
		\end{equation}
		where $p\in{\big(1+\frac{4}{d},1+\frac{4}{(d-2)_+}\big)}$, $v_j \neq v_k$ 
        for any $j\neq k$, 
        and the parameters $w_k\in\mathbb{R}^+,v_k,\alpha_k\in \mathbb{R}^d$ satisfy
		\begin{equation}
			\left(w_k\right)^{-1}+w_k+|v_k|+|\alpha_k|\le C_2,
			\label{6.37}
		\end{equation}
		for some positive constant $C_2$.
		
		Then, we have that for any $j\neq k$ and $p_1$, $p_2>0$,
		\begin{equation}
			\int|G_{1,j}(t,x)|^{p_1}|G_{2,k}(t,x)|^{p_2}dx\le Ce^{-\delta_2t},
			\label{6.39}
		\end{equation}
		where $C$ and $\delta_2(>0)$ depend on $\delta_0$, $C_1$, $C_2$, $p_1$ and $p_2$.
	\end{lemma}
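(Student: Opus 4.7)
The plan is to exploit that the centers $z_k(t):=v_kt+\alpha_k$ of the two rescaled profiles $G_{1,j}$ and $G_{2,k}$ drift apart linearly in $t$ when $v_j\neq v_k$, while each profile decays exponentially away from its own center at a rate controlled uniformly by $C_2$. The exponential gain in $t$ will come from evaluating the decaying factor at the minimal distance $|z_j(t)-z_k(t)|$, which grows linearly in $t$.

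First, I would record the elementary distance estimate
\begin{equation*}
|z_j(t)-z_k(t)|\ge|v_j-v_k|\,t-|\alpha_j-\alpha_k|\ge \tfrac12|v_j-v_k|\,t
\end{equation*}
valid for every $t\ge T_0$ with $T_0:=4C_2/\min_{j\neq k}|v_j-v_k|$; here I use the hypothesis that the parameters are uniformly bounded by $C_2$ and that the velocities are pairwise distinct, so $\min_{j\neq k}|v_j-v_k|>0$. For $t\in[0,T_0]$ the conclusion will be obtained trivially by the uniform bound $\int|G_{1,j}|^{p_1}|G_{2,k}|^{p_2}\,dx\le C$ (finite because each profile is bounded by a Gaussian-type exponential and $w_k$ is pinched away from both $0$ and $\infty$), which can be absorbed into a prefactor $e^{\delta_2 T_0}$.

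Next, for $t\ge T_0$ I would split the domain into
\begin{equation*}
A_j:=\bigl\{x:\,|x-z_j(t)|\ge\tfrac12|z_j(t)-z_k(t)|\bigr\},\qquad A_k:=\mathbb{R}^d\setminus A_j,
\end{equation*}
so that on $A_k$ one automatically has $|x-z_k(t)|\ge\tfrac12|z_j(t)-z_k(t)|$ by the triangle inequality. On $A_j$ I exploit the hypothesis $|g_1(y)|\le C_1 e^{-\delta_0|y|}$ to write
\begin{equation*}
|G_{1,j}(t,x)|^{p_1}\le C_1^{p_1} w_j^{-\frac{2p_1}{p-1}}\,e^{-\frac{p_1\delta_0}{2w_j}|z_j(t)-z_k(t)|}\,e^{-\frac{p_1\delta_0}{2w_j}|x-z_j(t)|},
\end{equation*}
then bound $|G_{2,k}|^{p_2}$ by its $L^\infty$ norm, and integrate the remaining half-exponential in $x$ over $\mathbb{R}^d$ (which gives a finite constant depending only on $C_1,C_2,p_1,\delta_0$). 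A symmetric estimate handles the integral over $A_k$, with the roles of $j$ and $k$ (and of $p_1,p_2$) reversed.

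Combining, and inserting the lower bound on $|z_j(t)-z_k(t)|$ together with $w_j,w_k\le C_2$, I obtain
\begin{equation*}
\int|G_{1,j}|^{p_1}|G_{2,k}|^{p_2}\,dx\le C\,e^{-\delta_2 t},\qquad t\ge T_0,
\end{equation*}
with $\delta_2=\tfrac{\min(p_1,p_2)\,\delta_0\,\min_{j\neq k}|v_j-v_k|}{8\,C_2}$ and $C$ depending only on $\delta_0, C_1, C_2, p_1, p_2$. The estimate extends to $t\in[0,T_0]$ by enlarging the constant $C$ by the factor $e^{\delta_2 T_0}$, which concludes \eqref{6.39}. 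No step here is delicate: the main point to check carefully is that the parameters $w_k^{-1}+w_k+|v_k|+|\alpha_k|\le C_2$ make both the prefactor $w_j^{-2p_1/(p-1)}$ and the exponent $\delta_0/w_j$ uniformly controlled, so that the final $C$ and $\delta_2$ depend only on $\delta_0,C_1,C_2,p_1,p_2$ and not on the individual parameter values.
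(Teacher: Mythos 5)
Your proposal is correct and is essentially the argument the paper has in mind: it omits the proof and refers to the analogous Lemma 6.3 of \cite{RSZ23}, which rests on exactly this mechanism of linearly separating centers $z_j(t)=v_jt+\alpha_j$, a splitting of $\mathbb{R}^d$ into the two half-regions around each center, and the uniform control of $w_k^{\pm1},|v_k|,|\alpha_k|$ by $C_2$. The only blemish is a harmless constant slip in the intermediate display on $A_j$, where $|x-z_j|\ge\tfrac12|z_j-z_k|$ yields the factor $e^{-\frac{p_1\delta_0}{4w_j}|z_j(t)-z_k(t)|}$ rather than $e^{-\frac{p_1\delta_0}{2w_j}|z_j(t)-z_k(t)|}$; your final value of $\delta_2$ already corresponds to the corrected constant, so the conclusion \eqref{6.39} is unaffected.
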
	
	
	\medskip
	Now, we come to the proof of Proposition \ref{prop3.5}.
	\begin{proof}[Proof of Proposition \ref{prop3.5}]
		Define the map
		\begin{equation*}
			G_1:\mathbb{R}^{2K}\mapsto H^1  \  \ {\rm by} \ \ G_1(\boldsymbol{b})=i\sum_{k,\pm}b_k^{\pm}Y_k^{\pm}, 
		\end{equation*}
		where $\boldsymbol{b}$ is given by \eqref{def-b} 
		and $Y_k^\pm$ are the rescaled eigenfunctions given by \eqref{3.2}.
		
		Moreover,
		in view of Lemma \ref{lem6.3},
		for any $v\in B_{H^1}(\delta_*)$,
		there exist unique 
		$\alpha(v)=(\alpha_1(v),\alpha_2(v),\cdots,\alpha_K(v))$ $ \in (\mathbb{R}^d)^K$ 
		and 
		$\theta(v)=(\theta_1(v),\theta_2(v),\cdots,\theta_K(v)) \in \mathbb{R}^K$, 
		such that 
		the geometrical decomposition \eqref{6.46}
		and the  orthogonal conditions \eqref{6.47} hold, 
		where 
		$u$ is replaced by $v+R(T)$, 
		and 
		$R(T)$ is as in \eqref{6.45} with $L$ replaced by $T$. 
		Then, we define the second map
		\begin{align*}
			G_2: 
			B_{H^1}(\delta_*) \mapsto H^1\times (\mathbb{R}^d)^K\times\mathbb{R}^K,
		\end{align*}
		by
		\begin{align} \label{G2-def}
			G_2(v)=(\varepsilon,\alpha,\theta)
			:=\Bigg(v+R(T)-\sum\limits_{k=1}^KQ_{w_k}
			(y_k(\alpha(v))) 
			e^{i\Phi_k(\theta(v))},\alpha(v), \theta(v)\Bigg),
		\end{align}
		where
		\begin{align} \label{phijv-def}
			y_k(\alpha(v))=x-v_kT-\alpha_k(v),\quad
            \Phi_k(\theta(v)) = \frac{1}{2}v_k\cdot{x}-\frac{1}{4}|v_k|^2T+(w_k)^{-2}T+\theta_k(v). 
		\end{align} 
		
		Let us further define the third map
		\begin{align*}
			G_3: H^1\times (\mathbb{R}^d)^K\times\mathbb{R}^K
			\mapsto \mathbb{R}^{2K},
		\end{align*}
		by
		\begin{align}  \label{G3-def}
			G_3(\varepsilon,\alpha,\theta):
			=\Big(\text{Im}\int\widetilde{Y}_k^{\pm}\bar{\varepsilon}dx\Big)_{1\le k\le K,\pm},
		\end{align}
        where $\widetilde{Y}_k^\pm$ is as in \eqref{3.49} with $y_k(t,x)$ and $\Phi_k(t,x)$ replaced by $y_k(\alpha)$ and $\Phi_k(\theta)$, respectively.
		For simplicity, we drop the parameter $T$ in the following arguments.
		
		\medskip
		{\bf Claim:} $G=G_3\circ G_2\circ G_1$ is a diffeomorphism in a
		small neighborhood of ${\bf 0}\in\mathbb{R}^{2K}$. 
		
        \medskip
        
In order to prove this claim, 
by the chain rule, 
we shall compute the derivatives $dG_i$, 
$i=1,2,3$, 
separately in the following. 
Before that, let us note that 
since by \eqref{bound-Yk}, 
        $\|Y_k^\pm\|_{H^1}$ is bounded,
		there exists a deterministic constant $\eta'>0$ 
		such that for any $\boldsymbol{b}\in B_{\mathbb{R}^{2K}}(\eta')$, $\|G_1(\boldsymbol{b})\|_{H^1}\le\delta_*$. 
        Hence, 
        the map $G_2\circ G_1$ is well-defined on $B_{\mathbb{R}^{2K}}(\eta')$.

        \medskip 
        $(i)$ $dG_1$:
		We compute that 
		\begin{equation*}
			dG_1=(\text{$i$}Y_1^+,\text{$i$}Y_2^+,\cdots,\text{$i$}Y_K^+,\text{$i$}Y_1^-,\text{$i$}Y_2^-,\cdots,\text{$i$}Y_K^-),
		\end{equation*}
		which yields that for any $\boldsymbol{b}\in\mathbb{R}^{2K}$,
		\begin{equation}
			dG_1.\boldsymbol{b}=\text{$i$}\sum\limits_{k,\pm}b_k^{\pm}Y_k^{\pm}.
			\label{3.54}
		\end{equation}

		$(ii)$ $dG_2$: 
		Next, let us consider the second map
		$G_2$ given by \eqref{G2-def}.
		Set $F_1^j:=(f_{1}^j,f_{2}^j,\cdots,f_{d}^j)^\top$
		with 
		$f_{l}^j (v, \alpha, \theta)
		:=\text{Re}\int\partial_l\widetilde{R}_j\bar{\varepsilon}dx$, 
		and 
        $F_2^j(v, \alpha, \theta):=\text{Im}\int\widetilde{R}_j\bar{\varepsilon}dx$, $1\le l\le d$, $1\le j\le K$, 
		where $\widetilde{R}_j$ 
		is as in \eqref{3.4} 
		with $t, \alpha_j(t), \theta_j(t)$ 
		replaced by $T$, $\alpha_j$, $\theta_j$, 
		respectively,   
		$1\leq j\leq K$.

		By straightforward computations,
		we have that for any $h\in H^1$,
		\begin{align}
			dG_2.h
			=\Bigg(h+\sum\limits_{k=1}^{K}\sum\limits_{l=1}^{d}(\partial_lQ_{w_k})e^{i\Phi_k(\theta)}\Big(\frac{d\alpha_{k,l}}{dv}.h\Big)-i\sum\limits_{k=1}^{K}\widetilde{R}_k\Big(\frac{d\theta_k}{dv}.h\Big),\Big(\frac{d\alpha_k}{dv}.h\Big)_{1\le k\le K},\Big(\frac{d\theta_k}{dv}.h\Big)_{1\le k\le K}\Bigg), 
			\label{dg}
		\end{align}
		where 
		$\frac{d\alpha_{k}}{dv}.h:=(\frac{d\alpha_{k,1}}{dv}.h,\frac{d\alpha_{k,2}}{dv}.h,\cdots,\frac{d\alpha_{k,d}}{dv}.h)^\top$. 
		
		In order to compute the directional derivatives  $\frac{d\alpha_{k,l}}{dv}.h$ and $\frac{d\theta_k}{dv}.h$, 
		we let $\frac{\partial F_1^j}{\partial\alpha_k}$, $\frac{\partial F_1^j}{\partial\theta_k}$ and $\frac{\partial F_2^j}{\partial\alpha_k}$ denote the following three Jacobian matrices, respectively, 
		\begin{equation}
			\label{3.58}\frac{\partial F_1^j}{\partial\alpha_k}:=
			\begin{bmatrix}
				\frac{\partial f_{1}^j}{\partial\alpha_{k,1}} & \frac{\partial f_{1}^j}{\partial\alpha_{k,2}} & \cdots & \frac{\partial f_{1}^j}{\partial\alpha_{k,d}}\\[10pt]
				\frac{\partial f_{2}^j}{\partial\alpha_{k,1}} & \frac{\partial f_{2}^j}{\partial\alpha_{k,2}} & \cdots & \frac{\partial f_{2}^j}{\partial\alpha_{k,d}}\\
				\vdots & \vdots & \ddots & \vdots\\
				\frac{\partial f_{d}^j}{\partial\alpha_{k,1}} & \frac{\partial f_{d}^j}{\partial\alpha_{k,2}} & \cdots & \frac{\partial f_{d}^j}{\partial\alpha_{k,d}}
			\end{bmatrix},\quad
			\frac{\partial F_1^j}{\partial\theta_k}:=
			\begin{bmatrix}
				\frac{\partial f_{1}^j}{\partial\theta_{k}}\\[10pt]
				\frac{\partial f_{2}^j}{\partial\theta_{k}}\\
				\vdots\\
				\frac{\partial f_{d}^j}{\partial\theta_{k}}
			\end{bmatrix},\quad
			\frac{\partial F_2^j}{\partial\alpha_k}:=
			\begin{bmatrix}
				\frac{\partial F_2^j}{\partial\alpha_{k,1}} & \frac{\partial F_2^j}{\partial\alpha_{k,2}} & \cdots & \frac{\partial F_2^j}{\partial\alpha_{k,d}}
			\end{bmatrix}.
		\end{equation}
		By straightforward computations and
		the decoupling Lemma \ref{lem6.2},
		\begin{equation}
			\label{6.57}
			\begin{aligned}
				&\frac{\partial f_{l}^j}{\partial\alpha_{j,l}}=\left(w_j\right)^{-2}\Vert\partial_lQ_{w_j}\Vert_{L^2}^2+\mathcal{O}\left(\Vert\varepsilon\Vert_{L^{2}}\right),~~\frac{\partial f_{l}^j}{\partial\theta_j}=-\frac{v_{j,l}}{2}\Vert Q_{w_j}\Vert_{L^2}^2+\mathcal{O}\left(\Vert\varepsilon\Vert_{L^{2}}\right), \\
				&\frac{\partial F_2^j}{\partial\theta_j}=\Vert Q_{w_j}\Vert_{L^2}^2+\mathcal{O}\left(\Vert\varepsilon\Vert_{L^{2}}\right),\quad1\le l\le d,\quad1\le j\le K. 
			\end{aligned}
		\end{equation}
		The other terms in matrices \eqref{3.58} are of small order $\mathcal{O}(\|\varepsilon\|_{L^2}+e^{-\delta_2T})$.
		
		In view of the orthogonality conditions in \eqref{6.47},
		we have
		\begin{equation*} F_1^j(v,\alpha(v),\theta(v))={\bf 0},~~F_2^j(v,\alpha(v),\theta(v))=0.
		\end{equation*}
		Then, differentiating the above identities with respect to $v$ we get
		\begin{equation}
			\begin{aligned}
				&  \partial_vF_1^j+
				\sum\limits_{k=1}^K
				\frac{\partial F_1^j}{\partial\alpha_k}
				\left(\frac{d\alpha_k}{dv}\right)
				+\sum\limits_{k=1}^K
				\frac{\partial F_1^j}{\partial\theta_k}\frac{d\theta_k}{dv}
				={\bf 0}, \\
				&  \partial_vF_2^j
				+ \sum\limits_{k=1}^K
				\frac{\partial F_2^j}{\partial\alpha_k}\cdot\frac{d\alpha_k}{dv}
				+ \sum\limits_{k=1}^K
				\frac{\partial F_2^j}{\partial\theta_k}\frac{d\theta_k}{dv}=0,
			\end{aligned}
			\label{F1F2}
		\end{equation}
		where
		$ \frac{\partial F_1^j}{\partial\alpha_k}
		(\frac{d\alpha_k}{dv} ) $
		means that the Jacobian matrix
		$\frac{\partial F_1^j}{\partial\alpha_k}$ acts on the vector
		$\frac{d\alpha_k}{dv}$,
		and $ \frac{\partial F_2^j}{\partial\alpha_k}\cdot\frac{d\alpha_k}{dv}$
		denotes the inner product between two vectors
		$ \frac{\partial F_2^j}{\partial\alpha_k}$ and $\frac{d\alpha_k}{dv}$.
		
		Plugging \eqref{3.58} and \eqref{6.57} into \eqref{F1F2} and using \eqref{6.48} we compute that for any $h\in H^1$,
		\begin{equation}
			\begin{aligned}
				& \frac{d\alpha_{k,l}}{dv}.h =-(w_k)^2\Vert\partial_lQ_{w_k}\Vert_{L^2}^{-2}\Big(\frac{1}{2}v_{k,l}\text{Im}\int\widetilde{R}_k\bar{h}dx+\text{Re}\int\partial_l\widetilde{R}_k\bar{h}dx\Big)+\mathcal{O}\big(\|h\|_{L^2}(\|v\|_{H^1}+e^{-\delta_2T})\big),\\
				&  \frac{d\theta_k}{dv}.h =-\Vert Q_{w_k}\Vert_{L^2}^{-2}\text{Im}\int\widetilde{R}_k\bar{h}dx+\mathcal{O}\big(\|h\|_{L^2}(\|v\|_{H^1}+e^{-\delta_2T})\big),
				\quad 1\le l\le d,\ \ 1\le k\le K.
			\end{aligned}\label{.h}
		\end{equation}
		
		\medskip
        $(iii)$ $dG_3$: 
		Regarding the third map $G_3$
		given by \eqref{G3-def},
		we compute that for any $h\in H^1$, $\beta=(\beta_{j})_{1\le j\le K}\in\mathbb{R}^{d\times K}$ with $\beta_j\in\mathbb{R}^d$, $\gamma\in\mathbb{R}^K$, 
		\begin{equation*}
			\big(dG_3(\varepsilon,\alpha,\theta).(h,\beta,\gamma)\big)_{k,\pm}=\text{Im}\int\widetilde{Y}_k^{\pm}\bar hdx+\sum\limits_{j=1}^{K}\sum\limits_{l=1}^{d}\beta_{j,l}\partial_{\alpha_{j,l}}\text{Im}\int\widetilde{Y}_k^{\pm}\bar{\varepsilon}dx+\sum\limits_{j=1}^{K}\gamma_j\partial_{\theta_j}\text{Im}\int\widetilde{Y}_k^{\pm}\bar{\varepsilon}dx.
		\end{equation*}
		Note that
		\begin{align*}
			&\partial_{\alpha_{j,l}}\text{Im}\int\widetilde{Y}_k^{\pm}\bar{\varepsilon}dx=-\delta_{jk}~\text{Im}\int(w_k)^{-\frac{p+1}{p-1}}(\partial_l Y^\pm)\Big(\frac{y_k(\alpha)}{w_k}\Big)e^{i\Phi_k(\theta)}\bar{\varepsilon}dx,\\
			&\partial_{\theta_j}\text{Im}\int\widetilde{Y}_k^{\pm}\bar{\varepsilon}dx=\delta_{jk}~\text{Re}\int\widetilde{Y}_k^{\pm}\bar{\varepsilon}dx,~~1\le j\le K,~l\le l\le d.
		\end{align*}
		Thus,
		\begin{align}
			\big(dG_3(\varepsilon,\alpha,\theta).(h,\beta,\gamma)\big)_{k,\pm}=\text{Im}\int\widetilde{Y}_k^{\pm}\bar{h}dx-\beta_k\cdot\text{Im}\int(w_k)^{-\frac{p+1}{p-1}}(\nabla Y^{\pm})\Big(\frac{y_k(\alpha)}{w_k}\Big)e^{i\Phi_k(\theta)}\bar{\varepsilon}dx+\gamma_k\text{Re}\int\widetilde{Y}_k^{\pm}\bar{\varepsilon}dx.
			\label{3.57}
		\end{align}

        $(iv)$ Non-degeneracy of $dG$: 
		Now, for any $\boldsymbol{c}=\big(c_1^+,\cdots,c_K^+,c_1^-,\cdots,c_K^-\big)\in\mathbb{R}^{2K}$, let $v=G_1(\boldsymbol{b})\in B_{H^1}(\delta_*) \subseteq H^1$, $h=G_1(\boldsymbol{c})\in H^1$.
		Applying the chain rule we have
		\begin{equation} \label{dG}
			dG(\boldsymbol{b}).\boldsymbol{c}
			=dG_3\big(G_2(G_1(\boldsymbol{b}))\big).\big(dG_2(G_1(\boldsymbol{b})).(dG_1.\boldsymbol{c})\big).
		\end{equation}
		By \eqref{dg}, we have
		\begin{align}
			dG_2(G_1(\boldsymbol{b})).G_1(\boldsymbol{c})=
			\Bigg(&G_1(\boldsymbol{c})+\sum\limits_{k=1}^{K}\sum\limits_{l=1}^{d}\left(\partial_lQ_{w_k}\right)e^{i\Phi_k(\theta(G_1(\boldsymbol{b})))}\Big(\frac{d\alpha_{k,l}}{dv}.G_1(\boldsymbol{c})\Big)-i\sum\limits_{k=1}^{K}\widetilde{R}_k\Big(\frac{d\theta_k}{dv}.G_1(\boldsymbol{c})\Big),\nonumber\\[5pt]
			&~~\Big(\frac{d\alpha_k}{dv}.G_1(\boldsymbol{c})\Big)_{1\le k\le K},\Big(\frac{d\theta_k}{dv}.G_1(\boldsymbol{c})\Big)_{1\le k\le K}\Bigg).\label{dG2G1}
		\end{align}
		Set $Y_1=\text{Re}Y^+$, $Y_2=\text{Im}Y^+$. 
		Note that, by \eqref{6.48}, Lemma \ref{lem6.2} and the algebraic identity
		\begin{equation}
			\text{Re}\int QY^{\pm}dx=\int QY_1dx=-e_0^{-1}\int Q(L_-Y_2)dx=-e_0^{-1}\int(L_-Q)Y_2dx=0,\label{RE}
		\end{equation}
		we have
		\begin{align}
			\text{Im}\int\widetilde{R}_k 
			\overline{G_1(\boldsymbol{c})}dx
			=&\text{Re}\int\widetilde{R}_kc_k^\pm{\bar{\widetilde{Y}}}_k^{\scalebox{0.8}{\raisebox{-1.3ex}{$\pm$}}}dx+\text{Re}\int\widetilde{R}_kc_k^\pm\big(\bar{Y}_k^\pm-{\bar{\widetilde{Y}}}_k^{\scalebox{0.8}{\raisebox{-1.3ex}{$\pm$}}} \big)dx+\sum\limits_{j\neq k,\pm}\text{Re}\int\widetilde{R}_kc_j^\pm\bar{Y}_j^\pm dx\nonumber\\[5pt]
			\le& C\text{Re}\int QY^\pm dx+C|c_k^\pm|(|\alpha_k(G_1(\boldsymbol{b}))-\alpha_k^0|+|\theta_k(G_1(\boldsymbol{b}))-\theta_k^0|)+C |\mathbf{c}| e^{-\delta_2T} \nonumber\\
			=&\mathcal{O}\big(|\boldsymbol{c}|(e^{-\delta_2T}+|\boldsymbol{b}|)\big),
			\label{imi}
		\end{align}
		where $\widetilde{Y}_k$ is as in \eqref{3.49} with $t, \alpha_k(t), \theta_k(t)$ 
		replaced by $T$, $\alpha_k(G_1(\boldsymbol{b}))$, $\theta_k(G_1(\boldsymbol{b}))$, 
		respectively,   
		$1\leq k\leq K$.
		Similarly,
		using the algebraic identity
		\begin{equation}
			\text{Im}\int\nabla QY^\pm dx=\pm\int\nabla QY_2dx=\pm e_0^{-1}\int\nabla Q(L_+Y_1)dx=\pm e_0^{-1}\int(L_+\nabla Q)Y_1dx={\bf 0},\label{IM}
		\end{equation}
		and \eqref{6.48}, Lemma \ref{lem6.2} again we obtain		
		\begin{align}
			\text{Re}\int\partial_l\widetilde{R}_k\overline{G_1(\boldsymbol{c})} dx=\mathcal{O}\big(|\boldsymbol{c}|(e^{-\delta_2T}+|\boldsymbol{b}|)\big),\quad l=1,2,\cdots,d.
			\label{rei}
		\end{align}
		Plugging the above estimates \eqref{imi} and \eqref{rei} into \eqref{.h} and \eqref{dG2G1}
		we have
		\begin{equation*}
			dG_2\left(G_1(\boldsymbol{b})\right).G_1(\boldsymbol{c})=(G_1(\boldsymbol{c}),{\bf 0},{\bf 0})+\mathcal{O}\big(|\boldsymbol{c}|(e^{-\delta_2T}+|\boldsymbol{b}|)\big).
		\end{equation*}
		Inserting this into \eqref{dG} and using \eqref{3.57} we thus come to 
		\begin{align*}
			dG(\boldsymbol{b}).\boldsymbol{c}=\left(A_1^+,\cdots,A_K^+,A_1^-,\cdots,A_K^-\right)^\top
            +\mathcal{O}\big(|\boldsymbol{c}|(e^{-\delta_2T}+|\boldsymbol{b}|)\big) 
		\end{align*}
		with
		\begin{equation*}
			A_k^+=-\sum\limits_{j,\pm}c_j^\pm\ \text{Re}\int Y_k^+ \bar{Y}_j^\pm dx,~~\text{and}~~A_k^-=-\sum\limits_{j,\pm}c_j^\pm\ \text{Re}\int Y_k^- \bar{Y}_j^\pm dx,\quad 1\le k\le K. 
		\end{equation*}
		This along with Lemma \ref{lem6.2} yields that
		\begin{equation}
			dG(\boldsymbol{b})=
			\begin{bmatrix}
				A & y_*A \\[5pt]
				y_*A & A
			\end{bmatrix}+\mathcal{O}\big(e^{-\delta_2T}+|\boldsymbol{b}|\big),
			\label{3.60}
		\end{equation}
		where 
        $A:=\text{diag}\left(-\left(w_k\right)^{d-\frac{4}{p-1}}\right)_{1\le k\le K}$ 
        and 
        $y_*:=\int Y_1^2-Y_2^2dx$. 
        Note that 
		\begin{align*}
			y_*^2=\left(\int Y_1^2+Y_2^2dx\right)^2-4\int Y_1^2dx\int Y_2^2dx=1-4\int Y_1^2dx\int Y_2^2dx<1. 
		\end{align*} 
        
        We thus infer that 
        there exist deterministic constants $M>0$ 
        and $\eta \in (0,\eta')$, 
		such that for any $T\ge M$ and 
        any $\boldsymbol{b}\in B_{\mathbb{R}^{2K}}(\eta)$,
		the determinant of  $dG(\boldsymbol{b})$ is equal to 
		$$\big({\rm det}(A)\big)^2\big(1-y_*^2\big)^K+\mathcal{O}\big(e^{-\delta_2M}+\eta\big)> 0$$ 
		and is uniformly bounded from below, 
		independent of $\omega$. 
		Taking into account $G({\bf 0})={\bf 0}$ 
		we obtain that  
		for any $T\ge M$, 
		$G$ is a diffeomorphism from $B_{\mathbb{R}^{2K}}(\eta)$ to some neighborhood $U$ 
        of 
        ${\bf 0}\in \mathbb{R}^{2K}$, as claimed. 

\medskip  
 $(v)$ Uniform deterministic ball inside the random image of $G$:
We claim that, 
though the map $G$ depends on $T$ 
and the underlying probabilistic argument $\omega$, 
the image $U$ contains a deterministic small ball 
$B_{\mathbb{R}^{2K}}(r)$, 
where $r>0$ is a sufficiently small deterministic constant.

Actually, 
by the differential mean value theorem, 
there exists $\xi=\lambda\boldsymbol{b}$ with $0\le\lambda\le1$ 
such that 
\begin{equation*}
    G(\boldsymbol{b})=dG(\xi)\cdot\boldsymbol{b}.
\end{equation*}
Using the matrix in \eqref{3.60} we estimate 
\begin{align*}
    \big|G(\boldsymbol{b})\big|^2\ge&\sum\limits_{k=1}^K\Big(\big(y_*^2+1\big)(w_k)^{2d-\frac{8}{p-1}}\big((b_k^+)^2+(b_k^-)^2\big)+4y_*(w_k)^{2d-\frac{8}{p-1}}b_k^+b_k^-\Big)-C\big(e^{-\delta_2T}|\boldsymbol{b}|^2+|\boldsymbol{b}|^3\big)\\
    \ge&\Big(\big(y_*-1\big)^2(w_*)^{2d-\frac{8}{p-1}}-Ce^{-\delta_2T}\Big)|\boldsymbol{b}|^2-C|\boldsymbol{b}|^3,
\end{align*}
where $w_*:=\min_{1\le k\le K}\{w_k\}>0$, and $C$ is a positive constant independent of $T$. 
Then, 
for $M$ possibly larger  
and $\eta$ possibly even smaller,  
we get that for any $T\ge M$ and 
$\boldsymbol{b}\in B_{\mathbb{R}^K}(\eta)$,
\begin{equation*}
    \big|G(\boldsymbol{b})\big|^2\ge\frac{1}{2}\big(y_*-1\big)^2(w_*)^{2d-\frac{8}{p-1}}|\boldsymbol{b}|^2-C|\boldsymbol{b}|^3
    \ge\frac{1}{4}\big(y_*-1\big)^2(w_*)^{2d-\frac{8}{p-1}}|\boldsymbol{b}|^2, 
\end{equation*} 
Thus, letting $r := \frac{1}{2}
(1-y_*)(w_*)^{d-\frac{4}{p-1}} \eta$ 
and taking into account 
$G (\partial B_{\mathbb{R}^K}(\eta)) = \partial U$ we obtain that $U$ contains the 
deterministic small ball 
$B_{\mathbb{R}^{2K}}(r)$, as claimed.   

\medskip 
		Now,
        for any
		$\boldsymbol{a}^-
		\in B_{\mathbb{R}^K}(r)$, $\left({\bf 0},\boldsymbol{a}^-\right) \in B_{\mathbb{R}^{2K}}(r)$, 
        by the inverse of the diffeomorphism $G$, 
		there exists a unique $\boldsymbol{b}=\boldsymbol{b}(\boldsymbol{a}^-) \in B_{\mathbb{R}^{2K}}(\eta)$ 
		such that 
		$G(\boldsymbol{b}(\boldsymbol{a}^-))=({\bf 0},\boldsymbol{a}^-)$ 
		and 
		$|\boldsymbol{b}(\boldsymbol{a}^-)|\le C|\boldsymbol{a}^-|$,  
		where $C$ is a deterministic positive constant as the determinant of 
		$dG(\boldsymbol{b})$ has a 
		deterministic uniform lower bound. 
		Therefore, 
		we obtain \eqref{3.52} 
		and finish the proof of Proposition \ref{prop3.5}.
	\end{proof}

	\section{Modulation equations
		and remainder}  \label{subsec cm}
	
	In this section,
	we aim to control the modulation parameters
	and remainder in the geometrical decomposition
	\eqref{3.3}.

    To begin with,
	let us first control the noise appearing in the rescaling transform \eqref{1.28}. 

   \subsection{Control of noise} 
   \label{Subsec-Noise}
	Set $B_{*,k}(t):=\int_{t}^{\infty}g_k(s)dB_k(s)$,  
    $1\leq k\leq N$, 
    and 
    \begin{align}  \label{B*-def} 
    B_{*}(t):=\sup_{t\le{s}<\infty}\sum_{k=1}^{N}|B_{*,k}(s)|, \quad t>0.
    \end{align} 
    Since $g_k\in L^2(\mathbb{R}^+)$, one has
	\begin{equation}\label{limB*}
		\lim_{t\to+\infty}B_*(t)=0,~~\mathbb{P}\text{-a.s.}. 
	\end{equation} 
	This yields that there exists a large random time $\sigma_1$, $\sigma_1\in (0,\infty)$, $\mathbb{P}$-a.s.,  
    such that 
	\begin{equation}
		\sup \limits_{t\ge{\sigma_1}}B_*(t)\le1, 
        \quad \mathbb{P}-a.s.
		\label{3.7}
	\end{equation}

In view of Lemma \ref{Lem-B*} 
and \eqref{6.48}, 
for $|\boldsymbol{b}|$ small enough, 
	we can choose 
    $T^*\ge \sigma_1$  such that for any $t\in[T^*,T]$,
	\begin{equation}
		\sup\limits_{{T^*}\le{t}\le{T}}\Vert\varepsilon(t)\Vert_{H^1}<1,
		\label{3.8}
	\end{equation}
	and for every $1\le{k}\le{K}$, 
	\begin{equation}
		B_*(t)+|\alpha_k(t)-\alpha^0_k|\le\frac{1}{10}{\rm min}\{1,w_k,\alpha_k^0\}. 
		\label{3.9}
	\end{equation}
	In particular, $B_*(t)$,  $|\alpha_k(t)|$
	and $\|\ve(t)\|_{H^1}$
	are bounded by a universal deterministic constant on $[T^*,T]$.

\medskip 
We remark that in Section \ref{sec4} below, 
thanks to the bootstrap estimates 
\eqref{5.5}-\eqref{5.7}, 
both estimates 
\eqref{3.8} and \eqref{3.9} 
are valid after a large random time.

\medskip    
In Case (II),  
we have the following refined decay estimate of 
$B_{*}(t)$. 
It has been used in \cite{RSZ23}, 
for the convenience of reader, 
we include its proof here.

\begin{lemma}  \label{Lem-B*}
In Case (II),  
there exists a positive random time $\sigma_2$ such that $\mathbb{P}$-a.s. 
$\sigma_2\geq \sigma_*$, 
where $\sigma_*$ is the random time 
as in \eqref{1.15},  
and for any $t\ge \sigma_2$,
	\begin{align}  \label{5.25}
		|B_{*,k}(t)|
        \le
2\bigg(2\int_{t}^{\infty}g_k^2(s)ds\log\Big(\int_{t}^{\infty}g_k^2(s)ds\Big)^{-1}\bigg)^{\frac{1}{2}}
\le\frac{2\sqrt{c^*}}{t}, 
\quad 1\le k\le N. 
	\end{align}  
In particular, 
there exists a deterministic constant $C>0$ 
suc that  $\mathbb{P}$-a.s. for any $t\ge \sigma_2$, 
	\begin{align}  \label{B*-bdd} 
		B_{*}(t)  
\leq \frac{C}{t}. 
	\end{align}  
\end{lemma}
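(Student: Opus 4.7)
The strategy is a two-step probabilistic argument: represent each $B_{*,k}(t)$ as an increment of a standard Brownian motion over a small (random) time lag via Dambis--Dubins--Schwarz, apply Lévy's modulus of continuity to control that increment, and then plug in the deterministic tail bound \eqref{1.15}.

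Step 1 (time change). Since $g_k\in L^2(\mathbb{R}^+)$ $\mathbb{P}$-a.s., the $\{\mathscr{F}_t\}$-martingale $N_k(t):=\int_0^t g_k(s)\,dB_k(s)$ has bracket $[N_k]_t=\int_0^t g_k^2\,ds$ and converges $\mathbb{P}$-a.s.\ to some $N_{k,\infty}$, with $T_{k,\infty}:=\int_0^\infty g_k^2\,ds<\infty$. Setting $\sigma_k(t):=\int_t^\infty g_k^2\,ds=T_{k,\infty}-[N_k]_t$, Dambis--Dubins--Schwarz provides a standard Brownian motion $W_k$ (on a possibly enlarged stochastic basis) so that
$$
B_{*,k}(t)=N_{k,\infty}-N_k(t)=W_k(T_{k,\infty})-W_k\bigl(T_{k,\infty}-\sigma_k(t)\bigr).
$$

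Step 2 (Lévy modulus). Lévy's modulus of continuity applied to $W_k$ on the compact random interval $[0,T_{k,\infty}]$ guarantees that $\mathbb{P}$-a.s.\ there exists a random threshold $h_k>0$ with
$$
\sup_{s\in[h,T_{k,\infty}]}\bigl|W_k(s)-W_k(s-h)\bigr|\le 2\sqrt{2h\log(1/h)}\qquad \text{for all }0<h\le h_k.
$$
Since $\sigma_k(t)\downarrow 0$ as $t\to\infty$, we can pick a random time $\tau_k\in(0,\infty)$ with $\sigma_k(t)\le h_k$ for every $t\ge\tau_k$. Defining
$$
\sigma_2:=\max\bigl\{\sigma_*,\tau_1,\ldots,\tau_N\bigr\},
$$
we get a random time with $\sigma_2\ge\sigma_*$, and for every $t\ge\sigma_2$ and $1\le k\le N$
$$
|B_{*,k}(t)|\le 2\sqrt{2\sigma_k(t)\log(1/\sigma_k(t))},
$$
which is the first inequality of \eqref{5.25}.

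Step 3 (polynomial bound). Inserting the Case (II) decay assumption \eqref{1.15} yields the second inequality of \eqref{5.25}. The bound \eqref{B*-bdd} then follows by summing over $1\le k\le N$ in the definition \eqref{B*-def} and noting that $t\mapsto 1/t$ is decreasing, so the supremum in $s\ge t$ is attained in the limit and gives $B_*(t)\le 2N\sqrt{c^*}/t$, a deterministic constant divided by $t$.

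The main delicate point is Step 2: Lévy's modulus is sharp with constant $1$, but we need a version valid with an enlarged deterministic constant ($2$, say) uniformly over $s\in[0,T_{k,\infty}]$. This is precisely what the $\limsup=1$ statement delivers, at the cost of the random threshold $h_k$ (and hence the random time $\sigma_2$). All other steps are routine once the time-change identity of Step 1 is in place.
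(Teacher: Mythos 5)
Your proposal is correct and follows essentially the same route as the paper: the Dambis--Dubins--Schwarz time change, Lévy's modulus of continuity with an enlarged constant below a random threshold, and then the tail assumption \eqref{1.15}. The only difference is presentational: where you invoke the modulus directly on the random interval $[0,T_{k,\infty}]$, the paper justifies this by establishing the modulus bound a.s.\ simultaneously on all deterministic intervals $[n-1,n+1]$, $n\in\mathbb{N}$, and then evaluating at the random index $[\sigma_{k,\infty}]$ --- a standard patch that your argument implicitly relies on.
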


\begin{proof} 
In the following we fix $1\leq k\leq N$. 
Let $\sigma_{k,\infty}:= \int_0^\infty g_k^2(s)ds$. 
Then, $\sigma_{k,\infty} \in (0,\infty)$, $\mathbb{P}$-a.s., 
due to the $L^2$-integrability of $g_k$.  

In view of  the theorem on time-change for martingales,
there exists a Brownian motion $W_{k}$  
	such that 
    $B_{*,k}(t) = W_{k} (\sigma_{k,\infty}) - W_{k}(\int_0^t g_k^2(r)dr)$, $\mathbb{P}$-a.s..
Moreover, 
by the Levy H\"older continuity of Brownian motions 
and the invariance under time shift of the law of Brownian motions, 
i.e., for every $n\in \mathbb{N}$, 
$W_k(n+\cdot) - W_k(\cdot)$ 
has the same law as the standard Brownian motion, 
we note that $\mathbb{P}$-a.s.  
\begin{align} \label{Levy-Holder}
			\lim\limits_{h\to 0} 
			\sup_{\substack{|t-t'|\leq h\\[0pt] t,t'\in[n-1,n+1]}} 
			\frac{|W_k(t') - W_k(t)|}{\sqrt{2h\log(1/h)}}
			=1 
		\end{align}  
for every $n\in \mathbb{N}$. 
Then, set 
\begin{align*}
			h_n:= \inf
			\Bigg\{ 
			h\in [0,\frac{1}{10}], 
			\sup_{\substack{|t-t'|\leq h\\[0pt] t,t'\in[n-1,n+1]}} 
			\frac{|W_k(t') - W_k(t)|}{\sqrt{2h\log(1/h)}}
			>2
			\Bigg \} 
			\wedge \frac{1}{10}. 
		\end{align*}
We see that $h_n$ is $\mathscr{F}_\infty$-measurable, 
$0<h_n\leq 1/10$  $\mathbb{P}$-a.s. 
due to \eqref{Levy-Holder}, 
and 
\begin{align} \label{W-hn-upbdd}
				\sup_{\substack{|t-t'|\leq h\\[0pt] t,t'\in[n-1,n+1]}} 
				\frac{|W_k(t') - W_k(t)|}{\sqrt{2h\log(1/h)}}
			\leq 2, 
			\quad 
			\forall h\leq h_n. 
		\end{align} 

Let $[\sigma_{k,\infty}]$ be the largest integer less than $\sigma_{k,\infty}$, 
and 
define $h_{[\sigma_{k,\infty}]}$ by 
$h_{[\sigma_{k,\infty}]}(\omega) 
:= (h_{[\sigma_{k,\infty}(\omega)]})(\omega)$ 
for $\omega\in \Omega$. 
Then, $h_{[\sigma_{k,\infty}]}$ is $\mathscr{F}_\infty$-measurable.  
Moreover, 
by \eqref{W-hn-upbdd}, 
we infer that 
$\mathbb{P}$-a.s.   
\begin{align}  \label{Wtau-htau-upbdd}
    |W_k(\sigma_{k,\infty}) - W_k(\sigma_{k,\infty} -h)|
   \leq& 
   \sup_{{\substack{|t-t'|\leq h\\[0pt] 
    t,t'\in [[\sigma_{k,\infty}]-1,[\sigma_{k,\infty}]+1]}}} 
     |W_k(t')-W_k(t)|   \notag \\ 
   \leq& 2\sqrt{2h\log (1/h)}, 
   \quad 
   \forall h\leq h_{[\sigma_{k,\infty}]} \wedge \sigma_{k,\infty}. 
\end{align}

Now, 
let 
\begin{align*}
    \sigma_{k,0} 
    :=\inf\bigg\{ 
   t>0: 
    \int_t^\infty g_k^2(s) ds 
    \leq h_{[\sigma_{k,\infty}]}
    \bigg\}. 
\end{align*}  
Since by the $L^2$-integrability of $g_k$,  
$ \int_t^\infty g_k^2(s) ds \to 0$ 
as $t\to \infty$, $\mathbb{P}$-a.s., 
one has 
$0\leq \sigma_{k,0}  <\infty$, $\mathbb{P}$-a.s.. 
We also see from the definition of $\sigma_{k,0}$ 
that   $\mathbb{P}$-a.s.  
\begin{align*}
     \int_t^\infty g_k^2(s) ds 
     \leq \int_{\sigma_{k,0}}^\infty 
     g_k^2(s) ds 
     = h_{[\sigma_{k,\infty}]}, 
     \quad 
     \forall t\geq \sigma_{k,0}. 
\end{align*} 
Thus, taking into account \eqref {Wtau-htau-upbdd} 
we derive that  $\mathbb{P}$-a.s. 
for any $t\geq \sigma_{k,0}$, 
\begin{align}  
  |B_{*,k}(t)|
  =   |W_{k} (\sigma_{k,\infty}) 
  - W_{k}(\sigma_{k,\infty} 
  - \int_t^\infty g_k^2(s)ds)|   
  \leq 
2\bigg(2\int_{t}^{\infty}g_k^2(s)ds\log\Big(\int_{t}^{\infty}g_k^2(s)ds\Big)^{-1}\bigg)^{\frac{1}{2}}. 
\end{align} 
Therefore, 
setting 
\begin{equation}\label{tau-0-def}
    \sigma_2=\max\{\sigma_*, \sigma_{k,0}, \ 1\leq k\leq N\}
\end{equation} 
and using \eqref{1.15} in $(A1)$ 
we obtain \eqref{5.25} 
and finish the proof.
\end{proof}

	\subsection{Control of modulation equations}
	
	We have the following control of modulation equations.

	\begin{proposition}[Control of modulation equations]\label{prop3.4}
		Let $|\boldsymbol{b}|$ be sufficiently small, $T$ large enough, and $T^*$ close to $T$ such that Proposition \ref{prop3.2}, \eqref{3.8} and \eqref{3.9} hold.
		Then,  we have
		\begin{equation}
			\sum\limits_{k=1}^K(|\dot\alpha_k(t)|+|\dot\theta_k(t)|)\le{C}\left(\Vert\varepsilon(t)\Vert_{H^1}+B_*(t)\phi(\delta_1t)+e^{-\delta_2t}\right),
			\quad \forall{t}\in[T^*,T],
			\label{3.11}
		\end{equation}
		where 
		$\phi$ is the spatial decay function of noise in \eqref{1.17}, and 
		$C,\delta_1,\delta_2$ are deterministic constants,  depending on 
        $w_k$, $v_k$, $\alpha_k^0$ and $\delta_0$.
	\end{proposition}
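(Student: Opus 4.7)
The plan is to turn the orthogonality conditions \eqref{3.5} into a linear system for the vector $(\dot\alpha_k,\dot\theta_k)_{1\le k\le K}$. First, I would substitute the geometrical decomposition $u=\sum_k\widetilde{R}_k+\varepsilon$ into \eqref{RNLS}. A direct computation, using the rescaled ground-state equation \eqref{1.4} and $\nabla\Phi_k=\tfrac12 v_k$, shows that each unmodulated profile $\widetilde{R}_k$ would solve the NLS exactly, so the only obstruction comes from the modulation and from the random lower-order terms. This yields the $\varepsilon$-equation
\begin{equation*}
i\partial_t\varepsilon+\Delta\varepsilon+\bigl(b_*\!\cdot\!\nabla+c_*\bigr)\varepsilon+\bigl(|u|^{p-1}u-\widetilde{R}^{\,p}\bigr)=\mathrm{Mod}-\mathcal{N}-b_*\!\cdot\!\nabla\widetilde{R}-c_*\widetilde{R},
\end{equation*}
where $\mathrm{Mod}=\sum_k\bigl(i\dot\alpha_k\!\cdot\!(\nabla Q_{w_k})(y_k)+\dot\theta_k Q_{w_k}(y_k)\bigr)e^{i\Phi_k}$ collects the modulation contributions and $\mathcal{N}:=|u|^{p-1}u-\widetilde{R}^{\,p}-\sum_k Q_{w_k}^p(y_k)e^{i\Phi_k}$ is the nonlinear interaction between $\widetilde{R}$ and $\varepsilon$ together with the cross-soliton terms.

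Next I would differentiate \eqref{3.5} in time. This produces $(d+1)K$ scalar equations of the form $\mathrm{Re}\int\partial_t\nabla\widetilde{R}_k\,\bar\varepsilon\,dx+\mathrm{Re}\int\nabla\widetilde{R}_k\,\overline{\partial_t\varepsilon}\,dx=0$, and similarly for the phase. Substituting the expression for $\partial_t\varepsilon$ and pairing $\mathrm{Mod}$ against $\nabla\widetilde{R}_k$ and $i\widetilde{R}_k$ yields a linear system
\begin{equation*}
\mathcal{A}(t)\bigl(\dot\alpha_1,\dot\theta_1,\ldots,\dot\alpha_K,\dot\theta_K\bigr)^{\top}=\mathcal{F}(t).
\end{equation*}
By the decoupling Lemma \ref{lem6.2} and the a priori bounds \eqref{3.8}--\eqref{3.9}, the matrix $\mathcal{A}(t)$ is, exactly as in the computation leading to \eqref{6.57}, block-diagonal up to $\mathcal{O}\bigl(\|\varepsilon\|_{H^1}+e^{-\delta_2t}\bigr)$, with diagonal entries $w_k^{-2}\|\partial_l Q_{w_k}\|_{L^2}^2$ and $\|Q_{w_k}\|_{L^2}^2$. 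Hence $\mathcal{A}(t)$ is invertible with a deterministic uniform bound on its inverse, and the estimate reduces to controlling $|\mathcal{F}(t)|$.

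Bounding $\mathcal{F}(t)$ is where the three contributions in \eqref{3.11} come in. Pairings of $\Delta\varepsilon$, the linearized nonlinear part and the genuine $\varepsilon$-part of $b_*\!\cdot\!\nabla\varepsilon+c_*\varepsilon$ against $\nabla\widetilde{R}_k$ or $i\widetilde{R}_k$ (after integrating the Laplacian by parts) are handled by the boundedness of the noise \eqref{3.7}, \eqref{3.9} and yield $\mathcal{O}(\|\varepsilon\|_{H^1})$. The cross-soliton pieces inside $\mathcal{N}$ give $\mathcal{O}(e^{-\delta_2 t})$ by Lemma \ref{lem6.2}. The new ingredient is the noise-soliton interaction $\langle b_*\!\cdot\!\nabla\widetilde{R}+c_*\widetilde{R},\,\nabla\widetilde{R}_k\text{ or }i\widetilde{R}_k\rangle$; using the explicit expressions \eqref{1.30}--\eqref{1.31}, $b_*(t,x)=2i\sum_k\nabla\phi_k(x)B_{*,k}(t)$ and an analogous identity for $c_*$, so these pairings factor into $B_*(t)$ times space integrals of the form $\int\phi^{(\nu)}(x)\,|\partial^{\mu} Q_{w_j}(y_j)|\,|\partial^{\mu'}Q_{w_k}(y_k)|\,dx$.

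The main obstacle is this last space integral. Using the exponential localization \eqref{1.7} of $Q_{w_k}$ around $v_kt+\alpha_k^0$ (with $|\alpha_k(t)-\alpha_k^0|$ small by \eqref{3.9}) together with the spatial decay bound $|\partial^\nu\phi_k(x)|\lesssim\phi(|x|)$ from \eqref{1.13}--\eqref{1.17}, one reduces the integral over a neighborhood of the soliton's centre of mass to a bound of the form $\phi(\delta_1 t)$, provided $\delta_1>0$ is chosen small enough depending on $\min_k|v_k|$ (when $v_k\ne0$) and the exponential decay rate $\delta_0$. Cross pairings between different solitons give the extra negligible factor $e^{-\delta_2 t}$ via Lemma \ref{lem6.2}. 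Assembling these three estimates yields \eqref{3.11}. I expect the principal technical care to be needed in the uniform-in-$t$ non-degeneracy of $\mathcal{A}(t)$ combined with the precise decay extraction in the $\phi(\delta_1 t)$ term, since both the polynomial case \eqref{1.16} and the exponential case \eqref{1.14} must be handled by the same argument.
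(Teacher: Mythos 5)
Your proposal is correct and follows essentially the same route as the paper, which itself only sketches the argument (deriving the $\varepsilon$-equation \eqref{3.63}, differentiating the orthogonality conditions \eqref{3.5} to obtain a nearly diagonal linear system as in \eqref{6.57}, and estimating the source terms via the decoupling Lemma \ref{lem6.2} and the splitting of the noise--soliton integral into the regions $|y|\le |v_k|t/2$ and $|y|>|v_k|t/2$, exactly as in the treatment of $I_5$ in \eqref{3.81}--\eqref{3.82}). The only point to keep in mind, which you already flag, is that extracting the factor $\phi(\delta_1 t)$ from the noise--soliton pairing requires $v_k\neq 0$, an assumption the paper also uses implicitly.
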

    \begin{remark}
		By using random NLS equation \eqref{3.1} and the geometrical decomposition \eqref{3.3}, we obtain the equation of the remainder 
		$\varepsilon$ in  \eqref{3.63} below.
		Then, taking the inner product of equation \eqref{3.63} with two unstable directions, using the orthogonality conditions \eqref{3.5} and applying Lemma \ref{lem6.2}
		one can get the control of  modulation equations $|\dot\alpha_k(t)|$ and $|\dot\theta_k(t)|$.
		For more details,
		we refer to 
        analogous arguments in the proof of 
        Proposition 3.3 in \cite{RSZ23}. 
        
		It is worth noting that, 
        in the derivation of \eqref{3.11},
        the condition $\varepsilon(T)=0$ was used in the mass-(sub)critical case \cite{RSZ23} to obtain a-priori control of $\varepsilon$ on $[T^*,T]$.
		But in the mass-supercritical case here,
        we do not have this condition.
        Instead,
		the a-priori control of  $\varepsilon$ on $[T^*,T]$ is  derived from \eqref{time T} and the continuity of $\varepsilon$ by taking $|\boldsymbol{b}|$ small enough.
	\end{remark}

	For the extra unstable directions $\{a_k^\pm\}$,
	we have the following estimate.

	\begin{proposition}[Control of extra unstable directions] \label{prop3.6}
		Assume the conditions of Proposition \ref{prop3.4} to hold. Then,
		for every $1\le k \le K$,
		we have
		\begin{equation}
			\left|\dot a_k^\pm(t)\mp e_0(w_k)^{-2}a_k^\pm(t)\right|\le C\big(\Vert\varepsilon\Vert_{H^1}^{p\wedge2} + B_*(t)\phi(\delta_1t) + e^{-\delta_2t}\big),
			\quad \forall t\in [T^*, T],
			\label{3.61}
		\end{equation}
		where
		$C,\delta_1,\delta_2$ are universal deterministic constants,  depending on $w_k$, $v_k$, $\alpha_k^0$ and $\delta_0$.
	\end{proposition}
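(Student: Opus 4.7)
The plan is to differentiate the definition \eqref{3.50} of $a_k^\pm$ in time, substitute the equation satisfied by $\varepsilon$, and isolate the leading eigenvalue contribution using the spectral relation $\mathscr{L}Y^\pm=\pm e_0 Y^\pm$. All remaining terms will be absorbed into the three admissible error sources in the right-hand side of \eqref{3.61}.

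First, I will derive the equation for the remainder. Inserting the geometrical decomposition \eqref{3.3} into \eqref{RNLS} and using that each $\widetilde{R}_k$ satisfies
\[
i\partial_t\widetilde{R}_k+\Delta\widetilde{R}_k+|\widetilde{R}_k|^{p-1}\widetilde{R}_k=-i\dot\alpha_k\cdot\nabla\widetilde{R}_k-\dot\theta_k\,\widetilde{R}_k
\]
(coming from the chain rule on the parameter-dependent profile \eqref{3.4}), I obtain a $\varepsilon$-equation of the schematic form
\begin{equation}\label{3.63}
i\partial_t\varepsilon+\Delta\varepsilon+\bigl(|\widetilde{R}+\varepsilon|^{p-1}(\widetilde{R}+\varepsilon)-\textstyle\sum_k|\widetilde{R}_k|^{p-1}\widetilde{R}_k\bigr)+b_*\cdot\nabla(\widetilde{R}+\varepsilon)+c_*(\widetilde{R}+\varepsilon)=\textstyle\sum_k\bigl(i\dot\alpha_k\cdot\nabla\widetilde{R}_k+\dot\theta_k\widetilde{R}_k\bigr).
\end{equation}
The nonlinearity is expanded as a linear-in-$\varepsilon$ piece $\tfrac{p+1}{2}|\widetilde{R}|^{p-1}\varepsilon+\tfrac{p-1}{2}|\widetilde{R}|^{p-3}\widetilde{R}^2\bar\varepsilon$, plus a superposition error $|\widetilde{R}|^{p-1}\widetilde{R}-\sum_k|\widetilde{R}_k|^{p-1}\widetilde{R}_k$ (purely cross-soliton terms), plus a higher-order piece controlled pointwise by $|\widetilde{R}|^{(p-2)_+}|\varepsilon|^2+|\varepsilon|^p$, hence in $L^1$ by $\|\varepsilon\|_{H^1}^{p\wedge 2}$ via Sobolev embedding.

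Next, differentiating $a_k^\pm(t)=\operatorname{Im}\int\widetilde{Y}_k^\pm\bar\varepsilon\,dx$ yields $\dot a_k^\pm=\operatorname{Im}\int\partial_t\widetilde{Y}_k^\pm\,\bar\varepsilon\,dx+\operatorname{Im}\int\widetilde{Y}_k^\pm\,\partial_t\bar\varepsilon\,dx$. For the first summand, a direct computation of $\partial_t\widetilde{Y}_k^\pm$ from \eqref{3.49}--\eqref{3.12} produces a ``frozen-parameter'' piece plus terms proportional to $\dot\alpha_k$ and $\dot\theta_k$; the frozen-parameter piece combines with the linear part of \eqref{3.63} so that, after an integration by parts, only the Schr\"odinger-operator action on $\widetilde{Y}_k^\pm$ survives. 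Using that $Y_{w_k}^\pm$ satisfies the rescaled eigenvalue equation $(-\Delta+(w_k)^{-2}-p(w_k)^{-2}Q_{w_k}^{p-1})$-version of $\mathscr{L}_\pm$ with eigenvalue $\pm e_0(w_k)^{-2}$, I extract exactly the main term $\pm e_0(w_k)^{-2}a_k^\pm(t)$.

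It then remains to estimate the six error sources produced in this expansion. The modulation contributions have the form $(\dot\alpha_k,\dot\theta_k)$ multiplied by $\operatorname{Re}\!\int\!\partial_l\widetilde{R}_j\bar{\widetilde Y}_k^\pm$ or $\operatorname{Im}\!\int\!\widetilde{R}_j\bar{\widetilde Y}_k^\pm$; for $j\ne k$ the decoupling Lemma \ref{lem6.2} (applicable by \eqref{1.7} and Lemma \ref{Y}) gives $O(e^{-\delta_2 t})$, and for $j=k$ the algebraic identities \eqref{RE}, \eqref{IM} annihilate the leading order so that only $O(\|\varepsilon\|_{H^1})$ corrections from the parameter shift remain, multiplied by $|\dot\alpha_k|+|\dot\theta_k|$ which is controlled by Proposition \ref{prop3.4} and hence yields $O(\|\varepsilon\|_{H^1}^2+B_*(t)\phi(\delta_1 t)+e^{-\delta_2 t})\subseteq O(\|\varepsilon\|_{H^1}^{p\wedge 2}+B_*(t)\phi(\delta_1 t)+e^{-\delta_2 t})$. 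The superposition and nonlinear remainders are bounded by $Ce^{-\delta_2 t}+C\|\varepsilon\|_{H^1}^{p\wedge 2}$ after pairing with the exponentially decaying $\widetilde{Y}_k^\pm$. Finally, the noise terms $b_*\cdot\nabla(\widetilde{R}+\varepsilon)$ and $c_*(\widetilde{R}+\varepsilon)$, after pairing with $\widetilde{Y}_k^\pm$, are bounded by $CB_*(t)\phi(\delta_1 t)\bigl(1+\|\varepsilon\|_{H^1}\bigr)$ using the spatial decay \eqref{1.13}--\eqref{1.17} of the noise together with the exponential localization of $\widetilde{Y}_k^\pm$ (which sits around $v_kt+\alpha_k^0$ of size $|x|\sim t$), again absorbable into the admissible errors.

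The main obstacle is the bookkeeping required to see that the modulation-induced terms involving $\widetilde{R}_k$ and $\widetilde{Y}_k^\pm$ cancel to leading order thanks to \eqref{RE}--\eqref{IM}, so that the residual is genuinely quadratic in $\|\varepsilon\|_{H^1}$ (hence controlled by $\|\varepsilon\|_{H^1}^{p\wedge 2}$ via Proposition \ref{prop3.4}), rather than linear. The secondary difficulty is the sharp exponent $p\wedge 2$ for the supercritical nonlinearity: when $p<2$, the pointwise bound $||u|^{p-1}u-|\widetilde{R}|^{p-1}\widetilde{R}-\tfrac{p+1}{2}|\widetilde{R}|^{p-1}\varepsilon-\tfrac{p-1}{2}|\widetilde{R}|^{p-3}\widetilde{R}^2\bar\varepsilon|\lesssim|\varepsilon|^p$ must be used in place of a quadratic Taylor remainder, since $z\mapsto|z|^{p-1}z$ is only $C^{1,p-1}$; combined with $H^1\hookrightarrow L^{p+1}$ this produces the claimed $\|\varepsilon\|_{H^1}^{p\wedge 2}$ after pairing with the Schwartz-class profile $\widetilde{Y}_k^\pm$.
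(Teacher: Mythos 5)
Your proposal follows essentially the same route as the paper's proof: differentiate \eqref{3.50}, substitute the $\varepsilon$-equation, extract the eigenvalue term via the rescaled spectral identity (the paper's \eqref{3.68}), kill the diagonal modulation terms with \eqref{RE}--\eqref{IM} and the off-diagonal ones with Lemma \ref{lem6.2}, handle $p<2$ by the pointwise $|\varepsilon|^p$ remainder bound plus Sobolev embedding, and exploit the spatial separation between the travelling profiles and the decaying noise coefficients for the $b_*,c_*$ terms. The only slip is writing the profile equation with $-i\dot\alpha_k\cdot\nabla\widetilde{R}_k$ instead of $-i\dot\alpha_k\cdot(\nabla Q_{w_k})(y_k)e^{i\Phi_k}$ (they differ by $\tfrac{1}{2}(\dot\alpha_k\cdot v_k)\widetilde{R}_k$), which is harmless since that extra term is of the same type as $\dot\theta_k\widetilde{R}_k$ and is absorbed identically.
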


	\begin{remark}
		We note that the exponent $p\wedge 2$ 
		is due to the singularity of the second derivative of supercritical nonlinearity 
		at the origin when $p<2$. 
	\end{remark}

	\begin{proof}[Proof of Proposition \ref{prop3.6}]
		We first note from \eqref{3.50} that
		\begin{equation}
			\dot a_k^\pm(t)=\text{Im}\int\partial_t\bar{\varepsilon}~\widetilde{Y}_k^\pm dx+\text{Im}\int\bar{\varepsilon}~\partial_t\widetilde{Y}_k^\pm dx.
			\label{3.62}
		\end{equation}
		Using the explicit expression \eqref{3.4} and \eqref{3.49}
		we compute
		\begin{align}
			\label{R1}\partial_t\widetilde{R}_k(t,x)&=i\left(-\frac{|v_k|^2}{4}+\left(w_k\right)^{-2}+\dot{\theta}_k(t)\right)\widetilde{R}_k(t,x)-(v_k+\dot{\alpha}_k(t))\cdot(\nabla Q_{w_k})(y_k(t))e^{i\Phi_k(t,x)},\\
			\label{R2}\nabla\widetilde{R}_k(t,x)&=(\nabla Q_{w_k})(y_k(t) )e^{i\Phi_k(t,x)}+\frac{i}{2}v_k\widetilde{R}_k(t,x),\\
			\label{R3}
			\Delta\widetilde{R}_k(t,x)&=\left(\Delta Q_{w_k}+iv_k\cdot(\nabla Q_{w_k})-\frac{|v_k|^2}{4}Q_{w_k}\right)(y_k(t) )e^{i\Phi_k(t,x)},
		\end{align}
		and
		\begin{align}
			\partial_t\widetilde{Y}_k^\pm(t,x) &=i\left(-\frac{|v_k|^2}{4}+\left(w_k\right)^{-2}+\dot{\theta}_k(t)\right)\widetilde{Y}_k^\pm(t,x)-\left(v_k+\dot{\alpha}_k(t)\right)\cdot\left(\nabla Y_{w_k}^\pm\right)(y_k(t))e^{i\Phi_k(t,x)},     \label{Y1} \\
			\label{Y2}\nabla\widetilde{Y}_k^\pm(t,x) &=\left(\nabla Y_{w_k}^\pm\right)(y_k(t))e^{i\Phi_k(t,x)}+\frac{i}{2}v_k\widetilde{Y}_k^\pm(t,x),\\
			\label{Y3}\Delta\widetilde{Y}_k^\pm(t,x) &=\left(\Delta Y_{w_k}^\pm+iv_k\cdot(\nabla Y_{w_k}^\pm)-\frac{|v_k|^2}{4}Y_{w_k}^{\pm}\right)(y_k(t))
			e^{i\Phi_k(t,x)},
		\end{align}
	where $y_k$ is given by \eqref{y2.50}
		and the phase function $\Phi_k(t,x)$ is given by \eqref{3.12}.
		Taking into account  equation \eqref{1.4} of the rescaled ground state
		we infer that
		$\widetilde{R}_k(t, x)$ satisfies the equation
		\begin{align*}
			i\partial_t\widetilde{R}_k+\Delta\widetilde{R}_k+|\widetilde{R}_k|^{p-1}\widetilde{R}_k
			=-i\dot{\alpha}_k(t)\cdot(\nabla Q_{w_k})(y_k(t))e^{i\Phi_k}-\dot{\theta}_k(t)\widetilde{R}_k.
		\end{align*}
		Then, using the rescaled random NLS \eqref{3.1}
		and the geometrical decomposition \eqref{3.3}
		we obtain
		\begin{align}
			\label{3.63}
			\partial_t\varepsilon=&\text{$i$}\Delta\varepsilon+\sum\limits_{k=1}^{K}\dot{\alpha}_k\cdot\left(\nabla Q_{w_k}\right)(y_k(t))e^{i\Phi_k}-i\sum\limits_{k=1}^{K}\dot{\theta}_k\widetilde{R}_k+i|\widetilde{R}+\varepsilon|^{p-1}(\widetilde{R}+\varepsilon)\nonumber\\
			&-\text{$i$}\sum\limits_{k=1}^{K}|\widetilde{R}_k|^{p-1}\widetilde{R}_k+ib_*\cdot(\nabla\widetilde{R}+\nabla\varepsilon)+ic_*(\widetilde{R}+\varepsilon).
		\end{align}
		
		Plugging \eqref{Y1}-\eqref{3.63} into \eqref{3.62} we obtain
		\begin{align}
			\dot a_k^\pm(t)
			=&\text{Re}\int\left(\left(w_k\right)^{-2}Y_{w_k}^\pm-\Delta Y_{w_k}^\pm\right)(y_k(t))e^{i\Phi_k(t)}\bar{\varepsilon}dx\nonumber\\
			&+\left(\text{Re}\int\dot{\theta}_k\widetilde{Y}_k^\pm\bar{\varepsilon}dx-\text{Im}\int\dot{\alpha}_k\cdot\left(\nabla Y_{w_k}^\pm\right)(y_k(t))e^{i\Phi_k(t)}\bar{\varepsilon}dx\right)\nonumber\\
			&+\sum\limits_{j=1}^{K}\left(\text{Re}\int\dot{\theta}_j\bar{\widetilde{R}}_j\widetilde{Y}_k^\pm dx+\text{Im}\int\dot{\alpha}_j\cdot(\nabla Q_{w_j})(y_j(t))e^{-i\Phi_j(t)}\widetilde{Y}_k^{\pm}dx\right)\nonumber\\
			&-\text{Re}\int\bigg(|\widetilde{R}+\varepsilon|^{p-1}\big(\bar{\widetilde{R}}+\bar{\varepsilon}\big)-\sum\limits_{j=1}^{K}|\widetilde{R}_j|^{p-1}\bar{\widetilde{R}}_j\bigg)\widetilde{Y}_k^\pm dx\nonumber\\
			&-\text{Re}\int\left(\bar{b}_*\cdot\big(\nabla\bar{\widetilde{R}}+\nabla\bar{\varepsilon}\big)+\bar{c}_*\big(\bar{\widetilde{R}}+\bar{\varepsilon}\big)\right)\widetilde{Y}_k^\pm dx\nonumber\\
			=&:\sum\limits_{m=1}^{5}I_m.
			\label{3.67}
		\end{align}
		Note that, by identities \eqref{p1} and \eqref{p2}, 
		\begin{align}
			\label{3.68}Y^\pm
			-\Delta Y^\pm =\mp ie_0Y^\pm
			+Q^{p-1}Y^\pm
			+(p-1)Q^{p-1}\text{Re}Y^\pm.
		\end{align}
		It follows that
		\begin{align}
			I_1
			=&\pm e_0\left(w_k\right)^{-2}\text{Im}\int\widetilde{Y}_k^\pm \bar{\varepsilon}dx+\bigg(\frac{p+1}{2}\text{Re}\int |\widetilde{R}_k|^{p-1}\widetilde{Y}_k^\pm\bar{\varepsilon}dx+\frac{p-1}{2}\text{Re}\int|\widetilde{R}_k|^{p-3}\bar{\widetilde{R}}_k^{\scalebox{0.7}{\raisebox{-1.5ex}{2}}}\widetilde{Y}_k^{\pm}\varepsilon dx\bigg)\nonumber\\
			=&:\pm e_0\left(w_k\right)^{-2}\text{Im}\int\widetilde{Y}_k^\pm  \bar{\varepsilon}dx+I_6.
			\label{3.70}
		\end{align}
		Hence, plugging \eqref{3.70} into \eqref{3.67} we come to
		\begin{equation}
			\dot a_k^\pm(t)\mp e_0\left(w_k\right)^{-2}a_k^{\pm}(t)=\sum\limits_{m=2}^{6}I_m.
			\label{3.71}
		\end{equation}
		
		\medskip
		Next, we estimate each term $I_m$, $2\leq m\leq 6$,
		separately.
		
		$(i)$ Estimate of $I_2$.
		First, by H\"older's inequality,  the estimate of modulation equation \eqref{3.11}
		and the fact that $Y^\pm \in \mathcal{S}(\mathbb{R}^d)$,
		we have
		\begin{align}
			\left|I_2\right|\le&C (|\dot{\alpha}_k|+|\dot{\theta}_k|) \Vert\varepsilon\Vert_{L^2}\big(\Vert Y^\pm\Vert_{L^2}+\Vert\nabla Y^\pm\Vert_{L^2}\big)\nonumber\\
			\le& C(\Vert\varepsilon\Vert_{H^1}^2
			+B_*(t)\phi(\delta_1t)
			+e^{-\delta_2t}).
			\label{3.72}
		\end{align}
		
		$(ii)$ Estimate of $I_3$.
		Using the identities \eqref{RE}
		and \eqref{IM},
		the modulation estimate \eqref{3.11} and the decoupling Lemma \ref{lem6.2} we derive
		\begin{align}
			|I_3|
			\le&C|\dot{\theta}_k|\left|\text{Re}\int QY^{\pm}dx\right|+C|\dot{\alpha}_k|\left|\text{Im}\int\nabla QY^\pm dx\right|\nonumber\\
			&+\sum\limits_{j\neq k}\left(|\dot{\theta}_j|\int\left|\bar{\widetilde{R}}_j\widetilde{Y}_k^{\pm}\right|dx+|\dot{\alpha}_j|\int\left|(\nabla Q_{w_j})(y_j(t))e^{-i\Phi_j(t)}\widetilde{Y}_k^{\pm}\right|dx\right)\nonumber\\
			\le&C (\Vert\varepsilon\Vert^2_{H^1}
			+B_*(t)\phi(\delta_1t)+e^{-\delta_2t} ).\label{3.73}
		\end{align}
		
		$(iii)$ Estimate of $I_4$ and $I_6$.
		In order to estimate $I_4$ and $I_6$,
		when $p\ge2$, from direct computations and Lemma \ref{lem6.2},
		one has
		\begin{align}
			\text{Re}\int|\widetilde{R}+\varepsilon|^{p-1}(\bar{\widetilde{R}}+\bar{\varepsilon})\widetilde{Y}_k^{\pm}dx=&\text{Re}\int|\widetilde{R}_k+\varepsilon|^{p-1}(\bar{\widetilde{R}}_k+\bar{\varepsilon})\widetilde{Y}_k^\pm dx+\mathcal{O}\big(e^{-\delta_2t}\big)\nonumber\\
			=&\text{Re}\int|\widetilde{R}_k|^{p-1}\bar{\widetilde{R}}_k\widetilde{Y}_k^\pm dx+\frac{p+1}{2}\text{Re}\int|\widetilde{R}_k|^{p-1}\widetilde{Y}_k^\pm\bar{\varepsilon}dx\nonumber\\
			&+\frac{p-1}{2}\text{Re}\int|\widetilde{R}_k|^{p-3}\bar{\widetilde{R}}_k^{\scalebox{0.7}{\raisebox{-1.5ex}{2}}}\widetilde{Y}_k^{\pm}\varepsilon dx+\mathcal{O}\left(\Vert\varepsilon\Vert_{H^1}^2+e^{-\delta_2t}\right).
			\label{3.76}
		\end{align}
		Plugging \eqref{3.76} into $I_4$ we derive that
		for $p\geq 2$,
		\begin{equation}
			|I_4+I_6|\le C(\Vert\varepsilon\Vert_{H^1}^2
			+e^{-\delta_2t}).
			\label{3.80}
		\end{equation}
		
		When $p<2$,
		using Lemma \ref{lem6.2} one 
		can decouple different soliton profiles and eigenfucntions to get 
		\begin{align}
			I_4=&-\text{Re}\int\Big(|\widetilde{R}_k+\varepsilon|^{p-1}(\bar{\widetilde{R}}_k+\bar{\varepsilon})-|\widetilde{R}_k|^{p-1}\bar{\widetilde{R}}_k\Big)\widetilde{Y}_k^\pm dx+\mathcal{O}\big(e^{-\delta_2t}\big)\nonumber\\
			=&-\text{Re} \bigg(\int_{|\widetilde{R}_k|>2|\varepsilon|} + \int_{|\widetilde{R}_k|\le2|\varepsilon|}\bigg)
			|\widetilde{R}_k+\varepsilon|^{p-1}(\bar{\widetilde{R}}_k+\bar{\varepsilon})\widetilde{Y}_k^\pm-|\widetilde{R}_k|^{p-1}\bar{\widetilde{R}}_k\widetilde{Y}_k^\pm dx  +\mathcal{O}\big(e^{-\delta_2t}\big).
			\label{I4<}
		\end{align}
		Note that,
		since $p\in (1+\frac 4d, 1+\frac{4}{d-2})$,
		we may take $\rho(>1)$ close to 1 such that $2\le\rho p\le\frac{2d}{d-2}$ if $d\ge3$,
		$2\le\rho p<+\infty$ if $d=1$, $2$.
		Then, using the Sobolev embedding $H^1(\mathbb{R}^d)\hookrightarrow L^{\rho p}(\mathbb{R}^d)$ and 
        the H\"older inequality we estimate
		\begin{align}
			& \Big|\text{Re}\int_{|\widetilde{R}_k|\le2|\varepsilon|}|\widetilde{R}_k+\varepsilon|^{p-1}(\bar{\widetilde{R}}_k+\bar{\varepsilon})\widetilde{Y}_k^\pm-|\widetilde{R}_k|^{p-1}\bar{\widetilde{R}}_k\widetilde{Y}_k^\pm dx\Big| \notag \\ 
			\le&C\int|\varepsilon|^p|\widetilde{Y}_k^\pm|dx 
			\le C\|\varepsilon\|_{L^{\rho p}}^p\|\widetilde{Y}_k^\pm\|_{L^{\rho'}} 
			\le C\|\varepsilon\|_{H^1}^p.
			\label{^p}
		\end{align}
		
		Moreover, 
		one has the expansion 
		\begin{align}
			|\widetilde{R}_k+\varepsilon|^{p-1}(\bar{\widetilde{R}}_k+\bar{\varepsilon})
			=|\widetilde{R}_k|^{p-1}\bar{\widetilde{R}}_k+\frac{p-1}{2}|\widetilde{R}_k|^{p-3}
			\bar{\widetilde{R}}_k^{\scalebox{0.7}{\raisebox{-1.5ex}{2}}}
			\varepsilon+\frac{p+1}{2}|\widetilde{R}_k|^{p-1}\bar{\varepsilon}+Er
			\label{Wir}
		\end{align}
		with the error term
		\begin{align*}
			Er=\int_0^1(1-s)\Big[\frac{\partial^2f}{\partial{z^2}}(\widetilde{R}_k+s\varepsilon)
			\varepsilon^2+2\frac{\partial^2f}{\partial{z\bar{z}}}(\widetilde{R}_k+s\varepsilon)|\varepsilon|^2
			+\frac{\partial^2f}{\partial{\bar{z}^2}}(\widetilde{R}_k+s\varepsilon)\bar{\varepsilon}^2\Big]ds, 
		\end{align*}
		where $f$ is defined by 
		$f(z):=|z|^{p-1}\bar{z}$, 
		$z\in \mathbb{C}$. 
		When $p<2$ and $x\in\{x:|\widetilde{R}_k|>2|\varepsilon|\}$,
		we have
		\begin{align*}
			|Er|\le C\int_0^1|\widetilde{R}_k+s\varepsilon|^{p-2}|\varepsilon|^2ds\le C|\varepsilon|^p,
		\end{align*}
		and so, as in \eqref{^p}, 
		\begin{equation}
			\Big|\text{Re}\int_{|\widetilde{R}_k|>2|\varepsilon|}\widetilde{Y}_k^\pm Erdx\Big|\le C\int|\varepsilon|^p|\widetilde{Y}_k^\pm|dx\le C\|\varepsilon\|_{H^1}^p.
			\label{Er}
		\end{equation}
		
		Thus, plugging \eqref{^p}-\eqref{Er} into \eqref{I4<} we obtain
		\begin{align}
			I_4=&-\text{Re}\int_{|\widetilde{R}_k|>2|\varepsilon|}\frac{p+1}{2}|\widetilde{R}_k|^{p-1}\widetilde{Y}_k^\pm\bar{\varepsilon}
			+\frac{p-1}{2}|\widetilde{R}_k|^{p-3}\bar{\widetilde{R}}_k^{\scalebox{0.7}{\raisebox{-1.5ex}{2}}}\widetilde{Y}_k^\pm\varepsilon dx+\mathcal{O}(\|\varepsilon\|_{H^1}^p+e^{-\delta_2t}).
			\label{I4}
		\end{align}
		
		Similarly, as in the proof of \eqref{^p},
		one has
		\begin{align}
			I_6=&\text{Re}\int\frac{p+1}{2}|\widetilde{R}_k|^{p-1}\widetilde{Y}_k^\pm\bar{\varepsilon}dx+\text{Re}\int\frac{p-1}{2}|\widetilde{R}_k|^{p-3}\bar{\widetilde{R}}_k^{\scalebox{0.7}{\raisebox{-1.5ex}{2}}}\widetilde{Y}_k^{\pm}\varepsilon dx\nonumber\\
			=&\text{Re}\int_{|\widetilde{R}_k|>2|\varepsilon|}\frac{p+1}{2}|\widetilde{R}_k|^{p-1}\widetilde{Y}_k^\pm\bar{\varepsilon}dx+\text{Re}\int_{|\widetilde{R}_k|>2|\varepsilon|}\frac{p-1}{2}|\widetilde{R}_k|^{p-3}\bar{\widetilde{R}}_k^{\scalebox{0.7}{\raisebox{-1.5ex}{2}}}\widetilde{Y}_k^{\pm}\varepsilon dx+\mathcal{O}(\|\varepsilon\|_{H^1}^p).
			\label{I6}
		\end{align}
		
		Combining \eqref{I4} and \eqref{I6} together,
		we thus obtain that when $p<2$,
		\begin{equation}
			|I_4+I_6|\le C(\|\varepsilon\|_{H^1}^p+e^{-\delta_2t}).
			\label{p<2}
		\end{equation}
		
		Therefore, we conclude from \eqref{3.80} 
		and \eqref{p<2} that 
		for $p\in (1+\frac 4d, 1+\frac{4}{(d-2)_+})$, 
		\begin{equation}  \label{I46-esti}
			|I_4+I_6|\le C(\|\varepsilon\|_{H^1}^{p\wedge 2}+e^{-\delta_2t}).
		\end{equation}

		$(iv)$ Estimate of $I_5$. It remains to treat the $I_5$ term involving the random coefficients. 
		By  H\"older's inequality,
		expressions \eqref{1.30} and \eqref{1.31},
		and the change of variables,
		$I_5$ can be bounded by
		\begin{align}
			|I_5|
			\le&CB_*(t)\sum\limits_{l=1}^{N}\int|\nabla\phi_l(y+v_kt+\alpha_k(t))|(|\nabla Q_{w_k}(y)|+|Q_{w_k}(y)|)dy\nonumber\\
			&+CB_*(t)\sum\limits_{l=1}^{N}\int\left(|\nabla\phi_l(y+v_kt+\alpha_k(t))|^2+|\Delta\phi_l(y+v_kt+\alpha_k(t))|\right)|Q_{w_k}(y)|dy\nonumber\\
			&+CB_*(t)\Vert\nabla\varepsilon\Vert_{L^2}\left(\sum\limits_{l=1}^{N}\int|\nabla\phi_l(y+v_kt+\alpha_k(t))|^2|Y^\pm(w_k^{-1}y)|^2dy\right)^{\frac{1}{2}}\nonumber\\
			&+CB_*(t)\Vert\varepsilon\Vert_{L^2}\left(\sum\limits_{l=1}^{N}\int\left(|\nabla\phi_l(y\!+\!v_kt\!+\!\alpha_k(t))|^2\!+\!|\Delta\phi_l(y\!+\!v_kt\!+\!\alpha_k(t))|\right)^2|Y^\pm(w_k^{-1}y)|^2dy\right)^{\frac{1}{2}}\!\!+\!Ce^{-\delta_2t}.
			\label{3.81}
		\end{align}

We note that the spatial functions of the noise 
travel with the speeds $\{v_k\}$. 
Hence, intuitively, 
after a large time, 
they shall be separated sufficiently far away 
from the ground state. 

In order to capture this fact, 
we split the integration into two regimes $|y|\leq {|v_k|}t/2$ 
        and $|y| > {|v_k|}t/2$. 
        The key observation is that, 
        for $|y|\leq {|v_k|}t/2$,
		where $t$ is large enough such that $t\geq 8 |\alpha_k^0|/|v_k|$,
		by \eqref{3.9},  $|y+v_kt+\alpha_k(t)|\ge|v_k|t-|y|-|\alpha_k(t)|\geq |v_k|t/4$. 
Thus, 
in view of Assumption ${\rm (A_1)}$, 
the integration in this regime  
can be controlled by the spatial decay 
rate of $\na \phi_l$ in the  noise. 
Moreover, 
in the outer large regime $|y| > {|v_k|}t/2$, 
the integration can be bounded by the exponential decay of the ground state $Q$. 
As a result,  the first and second 
integrations on the right-hand side of \eqref{3.81} above can be bounded by
		\begin{align}
			&CB_*(t)\sum\limits_{l=1}^{N}\left(\int_{|y|\le\frac{|v_k|}{2}t}|\nabla\phi_l(y+v_kt+\alpha_k)|(|\nabla Q_{w_k}(y)|+|Q_{w_k}(y)|)+|\nabla\phi_l(y+v_kt+\alpha_k)|^2|Q_{w_k}(y)|\right.\nonumber\\
			&\quad\quad\quad\quad\quad\quad~\quad\quad+\left.|\Delta \phi_l(y+v_kt+\alpha_k)||Q_{w_k}(y)|dy+\int_{|y|\ge\frac{|v_k|}{2}t}|Q_{w_k}(y)|+|\nabla Q_{w_k}(y)|dy\right)\nonumber\\
			\le&CB_*(t)\phi(\frac{|v_k|}{4}t)\left(\int |Q_{w_k}(y)|+|\nabla Q_{w_k}(y)|dy
			+ 
			\int_{|y|\ge\frac{|v_k|}{2}t}e^{-\frac{2\delta_0}{w_k}|y|}dy\right)\nonumber\\
			\le&CB_*(t)(\phi(\delta_1t)+e^{-\delta_2t})
			\label{3.82}
		\end{align}
		for some positive deterministic constants $C$, $\delta_1$ and $\delta_2$, 
        depending on $w_k$, $\alpha_k^0$, $v_k$ and $\delta_0$ from Lemma \ref{lem6.2}.

		Similarly, 
		in view of the exponential decay of $Y^\pm$ in Lemma \ref{Y} and 
		the spatial decay of noise in Assumption $(A_1)$, 
		the remaining two terms on the right-hand side of \eqref{3.81} also can be bounded by
		\begin{equation}
			C (\|\varepsilon\|^2_{H^1}+ B_*(t)\phi(\delta_1t)+ e^{-\delta_2t}).
			\label{3.83}
		\end{equation}
		
		Finally, combining \eqref{3.71}-\eqref{3.73}, \eqref{I46-esti}, \eqref{3.82} and \eqref{3.83} 
        altogether we obtain \eqref{3.61} and finish the proof.
	\end{proof}

	\subsection{Control of the remainder} \label{subsec cr}
	
	In this subsection,
	we control the remainder $\varepsilon$ in the geometrical decomposition.
	The key role is played by the Lyapunov functional $ \mathcal{G}$
	defined in \eqref{4.103} below.

	Because the velocities $\{v_k\}$ of soliton profiles are different,
	without loss of generality,
	we may assume that $v_{1,1}<v_{2,1}<\cdots<v_{K,1}$.
	Set
	$A_0:=\frac{1}{4} {\min}_{2\le k \le K}\left\{v_{k,1}-v_{k-1,1}\right\}$,
	$A_k:=\frac{1}{2}\left(v_{k-1,1}+v_{k,1}\right)$, $2\le k \le K$. Let $\Psi(x)$ be a smooth non-decreasing function on $\mathbb{R}$ such that $0\le \Psi \le 1$, $\Psi(x)=0$ for $x\le -A_0$, $\Psi(x)=1$ for $x\ge A_0$,
	and for some $C>0$,
	\begin{equation}
		\left(\Psi'(x)\right)^2\le C\Psi(x),\quad\left(\Psi''(x)\right)^2\le C\Psi'(x).
		\label{4.1}
	\end{equation}
	Define the localization functions by
	\begin{equation}
		\begin{aligned}
			&\varphi_1(t,x)=1-\Psi\left(\frac{x_1-A_2t}{t}\right),\quad\varphi_K(t,x)=\Psi\left(\frac{x_1-A_Kt}{t}\right),\\
			&\varphi_k(t,x)=\Psi\left(\frac{x_1-A_kt}{t}\right)-\Psi\left(\frac{x_1-A_{k+1}t}{t}\right),\quad2\le k \le K-1
		\end{aligned}
		\label{4.2}
	\end{equation}
	for any $x=(x_1,x_2,\cdots,x_d)\in\mathbb{R}^d$. 
	Note that
	$\sum_{k=1}^{K}\varphi_k (t,x)=1$,
	and
	\begin{align} \label{4.3}  |\partial_x\varphi_k(t,x)|+|\partial_x^3\varphi_k(t,x)|+|\partial_t\varphi_k(t,x)|\le \frac{C}{t}.
	\end{align}
	
	Proposition \ref{coe} below 
    provides the
	main control of the remainder 
    in the geometrical decomposition.

	\begin{proposition}[Coercivity type control of remainder]\label{coe}
     There exist a deterministic constant $r(>0)$ and  
      a positive random variable $T$,     
		 such that for any $\boldsymbol{a}^-\in{B}_{\mathbb{R}^K}\left(r\right)$ 
         and $T^*$ close to $T$,  
		there exist universal deterministic constants $C,\delta_1,\delta_2$,
		depending on $w_k$, $v_k$, $\alpha_k^0$ and $\delta_0$,
		\begin{align}
			\|\varepsilon(t)\|_{H^1}^2\le &C\int_{t}^{\infty}\Big(\frac{1}{s}+B_*(s)\Big)\Vert\varepsilon(s)\Vert_{H^1}^2ds+C\bigg(\int_t^{\infty}\|\varepsilon(s)\|_{H^1}^{p\wedge2}ds\bigg)^2+C\int_{t}^{\infty}B_*(s)\phi(\delta_1s)ds\nonumber\\
			&+C\bigg(\int_{t}^{\infty}B_*(s)\phi(\delta_1s)ds\bigg)^2+C \left(|\mathbf{a}^-(t)|^2+ |\boldsymbol{a}^-|^2+ e^{-\delta_2t} \right)+\beta\|\varepsilon(t)\|^2_{H^1},\ \forall t \in [T^*,T],
			\label{4.103}
		\end{align}
		where $\beta\to0$ as $\|\varepsilon(t)\|_{H^1}\to0$.
	\end{proposition}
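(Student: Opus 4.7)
The plan is to construct a localized Lyapunov functional in the spirit of \cite{CMM11,MM06}, combining the localized energy, mass and momentum associated to each soliton $R_k$:
\begin{equation*}
\mathcal{G}(t) := \sum_{k=1}^K \int \Big( \tfrac{1}{2}|\nabla u|^2 + \tfrac{1}{2}w_k^{-2}|u|^2 - \tfrac{1}{p+1}|u|^{p+1} + \tfrac{1}{2}v_k\cdot \operatorname{Im}(\bar u \nabla u) \Big)\varphi_k(t,x)\, dx.
\end{equation*}
Plugging in the geometrical decomposition $u=\widetilde R+\varepsilon$, expanding around $\widetilde R$, and using the exponential decay \eqref{1.7} together with the decoupling Lemma \ref{lem6.2}, the quadratic part in $\varepsilon$ reproduces (up to modulation and a change of variables) $\sum_{k=1}^K(\mathscr L \varepsilon_k,\varepsilon_k)$ on each soliton region $\{\varphi_k\simeq 1\}$, while the higher-order Taylor remainder of the nonlinearity is of order $\|\varepsilon\|_{H^1}^{p-1}\cdot\|\varepsilon\|_{H^1}^2 =: \beta\|\varepsilon\|_{H^1}^2$ with $\beta\to 0$ as $\|\varepsilon\|_{H^1}\to 0$. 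Invoking the coercivity \eqref{6.1} together with the orthogonality conditions \eqref{3.5} (which cancel the first two unstable directions) yields
\begin{equation*}
\mathcal G(t)-\mathcal G_\infty\ge C\|\varepsilon(t)\|_{H^1}^2-C\sum_{k=1}^K \big(|a_k^+(t)|^2+|a_k^-(t)|^2\big)-\beta\|\varepsilon(t)\|_{H^1}^2-Ce^{-\delta_2 t},
\end{equation*}
where $\mathcal G_\infty$ denotes the $\varepsilon$-independent sum of soliton-level energy/mass/momentum values.

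Next I would differentiate $\mathcal G(t)$ in time, using equation \eqref{3.63} for $\varepsilon$, the modulation bound \eqref{3.11}, the decay estimates \eqref{4.3} for $\varphi_k$ and $\partial_t\varphi_k$, and handle the random coefficients $b_*,c_*$ in \eqref{3.1} in the same spirit as the estimate of $I_5$ in the proof of Proposition \ref{prop3.6}. The deterministic part yields contributions of order $t^{-1}\|\varepsilon\|_{H^1}^2$ (coming from $\partial_t\varphi_k$) and exponentially small soliton-interaction terms; the random part contributes $B_*(t)\|\varepsilon\|_{H^1}^2$ together with $B_*(t)\phi(\delta_1 t)$ cross terms between the noise-localized derivatives and the soliton profiles. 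Altogether
\begin{equation*}
\Big|\tfrac{d}{dt}\mathcal G(t)\Big|\le C\Big(\tfrac{1}{t}+B_*(t)\Big)\|\varepsilon(t)\|_{H^1}^2+CB_*(t)\phi(\delta_1 t)+Ce^{-\delta_2 t}.
\end{equation*}
Integrating on $[t,T]$, the endpoint $\mathcal G(T)-\mathcal G_\infty$ is controlled by $C\|\varepsilon(T)\|_{H^1}^2\le C|\boldsymbol{b}|^2\le C|\boldsymbol{a}^-|^2$ via \eqref{time T} and Proposition \ref{prop3.5}, while the $|a_k^-(t)|^2$ term reproduces itself as $|\mathbf{a}^-(t)|^2$ on the right of \eqref{4.103}.

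The main obstacle is the control of $|a_k^+(t)|$, since $a_k^+$ is a linearly unstable mode of $\mathscr L$ and cannot be closed by a forward Gronwall argument. The idea is to integrate the ODE \eqref{3.61} backward from $T$ using the final condition $a_k^+(T)=0$ guaranteed by Proposition \ref{prop3.5}. Writing \eqref{3.61} as $\tfrac{d}{ds}\big(e^{-e_0 w_k^{-2}s}a_k^+(s)\big)=e^{-e_0 w_k^{-2}s}E_k(s)$ with $|E_k(s)|\le C(\|\varepsilon(s)\|_{H^1}^{p\wedge 2}+B_*(s)\phi(\delta_1 s)+e^{-\delta_2 s})$, and exploiting the exponentially damping factor $e^{-e_0 w_k^{-2}(s-t)}$ for $s\ge t$ over $[t,T]$, one obtains
\begin{equation*}
|a_k^+(t)|\le C\int_t^T \Big(\|\varepsilon(s)\|_{H^1}^{p\wedge 2}+B_*(s)\phi(\delta_1 s)+e^{-\delta_2 s}\Big)\,ds.
\end{equation*}
Squaring this estimate and inserting it into the coercivity lower bound on $\mathcal G(t)-\mathcal G_\infty$ produces exactly the two squared-integral terms on the right of \eqref{4.103}; in Case (II) the sharp bound \eqref{B*-bdd} on $B_*$ then guarantees integrability of the $B_*(s)\|\varepsilon(s)\|_{H^1}^2$ contribution, so that the resulting Gronwall step can be closed in Section \ref{sec4}.
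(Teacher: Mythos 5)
Your proposal follows essentially the same route as the paper's proof: a localized Lyapunov functional whose quadratic part, combined with the coercivity \eqref{6.1} and the orthogonality conditions \eqref{3.5}, controls $\|\varepsilon\|_{H^1}^2$ up to the unstable directions $\mathbf{a}^\pm$; a bound on $|\frac{d}{dt}\mathcal G|$ of the form \eqref{4.211} integrated over $[t,T]$; control of the endpoint contribution by $\|\varepsilon(T)\|_{H^1}^2\le C|\boldsymbol{b}|^2\le C|\boldsymbol{a}^-|^2$ via \eqref{time T} and Proposition \ref{prop3.5}; and backward integration of the ODE \eqref{3.61} from the steered final datum $a_k^+(T)=0$, which is exactly how the paper obtains \eqref{4.101}. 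The one concrete slip is in the functional you display: for the quadratic form to reduce to $\left(\mathscr{L}f,f\right)$ after removing the Galilean phase $e^{\frac{i}{2}v_k\cdot x}$, the localized mass must carry the coefficient $(w_k)^{-2}+\frac{|v_k|^2}{4}$ as in \eqref{4.21} (equivalently $\frac12 w_k^{-2}+\frac{|v_k|^2}{8}$ in your normalization); with your coefficient $\frac12 w_k^{-2}$ an uncompensated negative multiple of $|v_k|^2\int|\varepsilon|^2\varphi_k\,dx$ survives the completion of the square and coercivity can fail for large velocities. This is a bookkeeping error rather than a gap in the method, and the rest of the argument matches the paper, which delegates the properties of $\mathcal G$ to Lemma \ref{prop4.4} and estimate \eqref{4.41}.
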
	
	
	The key role in the proof of Proposition \ref{coe}
	is played  by the following Lyapunov type functional
	\begin{align}
		\mathcal{G}(t):=&\|\nabla u\|_{L^2}^2-\frac{2}{p+1}\|u\|_{L^{p+1}}^{p+1}+\sum\limits_{k=1}^K\bigg\{\left(\left(w_k\right)^{-2}+\frac{|v_k|^2}{4}\right)\int|u(t,x)|^2\varphi_k(t,x)dx\nonumber\\
		&\quad -v_k\cdot\text{Im}\int\nabla u(t,x)\bar{u}(t,x)\varphi_k(t,x)dx\bigg\}.
		\label{4.21}
	\end{align}
	where $u$ is the solution to the rescaled random NLS \eqref{3.1}.
	As in the proof of Propositions 4.1, 4.3 and 4.4 in \cite{RSZ23}, we have the following control of the Lyapunov functional.
	\begin{lemma}[Control of Lyapunov functional]\label{prop4.4}
		For any $t\in [T^*,T]$ one has
		\begin{equation}
			\left|\frac{d}{dt}\mathcal{G}(t)\right|\le \frac{C}{t}\left(\Vert\varepsilon(t)\Vert_{H^1}^2+e^{-\delta_2t}\right)+CB_*(t)\left(\Vert\varepsilon(t)\Vert_{H^1}^2+\phi(\delta_1t)+e^{-\delta_2t}\right),
			\label{4.211}
		\end{equation}
		where $C,\delta_1,\delta_2$ are deterministic constants,  depending on $w_k$, $v_k$, $\alpha_k^0$ and $\delta_0$.
		Moreover, the following  expansion holds:
		\begin{align}
			\mathcal{G}(t)=&\sum\limits_{k=1}^{K}\left(\|\nabla Q_{w_k}\|_{L^2}^2-\frac{2}{p+1}\|Q_{w_k}\|_{L^{p+1}}^{p+1}+\left(w_k\right)^{-2}\Vert Q_{w_k}\Vert_{L^2}^2\right)+H(\varepsilon(t))\nonumber\\
			&+\mathcal{O}(e^{-\delta_2t})+ 
			\beta \Vert\varepsilon(t)\Vert_{H^1}^2,
			\label{4.22}
		\end{align}
		where $\beta\to0$ as $\|\varepsilon(t)\|_{H^1}\to 0$, 
		and $H(\varepsilon)$ contains the quadratic terms of the remainder $\varepsilon$, i.e.,
		\begin{equation}
			\begin{aligned}
				H(\varepsilon)=&\int|\nabla\varepsilon|^2dx-\sum\limits_{k=1}^{K}\int |\widetilde{R}_k|^{p-1}|\varepsilon|^2+(p-1)|\widetilde{R}_k|^{p-3}\left({\rm Re}\widetilde{R}_k\bar{\varepsilon}\right)^2dx\\
				&+\sum\limits_{k=1}^{K}\left\{\left(\left(w_k\right)^{-2}+\frac{|v_k|^2}{4}\right)\int|\varepsilon|^2\varphi_k dx-v_k\cdot
				{\rm Im}\int\nabla\varepsilon\bar{\varepsilon}\varphi_kdx\right\}.
			\end{aligned}
			\label{4.23}
		\end{equation}
	\end{lemma}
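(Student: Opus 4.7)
The lemma separates into two independent tasks: the algebraic expansion \eqref{4.22} and the dynamical estimate \eqref{4.211}. My plan is to first establish \eqref{4.22} by plugging $u=\widetilde R+\varepsilon$ into the definition \eqref{4.21} of $\mathcal{G}$, and then establish \eqref{4.211} by differentiating each summand of $\mathcal{G}$ along the flow \eqref{3.1} and collecting the contributions driven by the cutoffs $\varphi_k$ and by the noise coefficients $b_*,c_*$.

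For \eqref{4.22}, the soliton-only contribution is computed from \eqref{R1}--\eqref{R3} together with the rescaled ground state equation \eqref{1.4}. Localization forces $\int|\widetilde R_j|^2\varphi_k\,dx=\delta_{jk}\|Q_{w_k}\|_{L^2}^2+\mathcal{O}(e^{-\delta_2 t})$ and similarly for the momentum integral, by Lemma \ref{lem6.2}, while the $|v_k|^2$ coefficients cancel precisely to leave $\|\nabla Q_{w_k}\|_{L^2}^2-\frac{2}{p+1}\|Q_{w_k}\|_{L^{p+1}}^{p+1}+w_k^{-2}\|Q_{w_k}\|_{L^2}^2$. The linear-in-$\varepsilon$ contribution assembles, after integration by parts in the momentum integral, into
\[
-2\sum_k\Re\int\bigl[\Delta\widetilde R_k+|\widetilde R_k|^{p-1}\widetilde R_k-(w_k^{-2}+|v_k|^2/4)\widetilde R_k-iv_k\cdot\nabla\widetilde R_k\bigr]\bar\varepsilon\,dx,
\]
and a direct computation using \eqref{R2}--\eqref{R3} and \eqref{1.4} shows that the bracketed expression vanishes identically. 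The only residual is the boundary term $v_k\cdot\Im\int\varepsilon\,\bar{\widetilde R}\nabla\varphi_k\,dx$, which is $\mathcal{O}(e^{-\delta_2 t}\|\varepsilon\|_{H^1})$ because $\nabla\varphi_k$ is supported at distance at least $A_0 t$ from every soliton center. The quadratic terms in $\varepsilon$ collect exactly to $H(\varepsilon)$ in \eqref{4.23}, and the cubic and higher-order remainders from the Taylor expansion of $|u|^{p+1}$ are absorbed into $\beta\|\varepsilon\|_{H^1}^2$ via the Sobolev embedding $H^1\hookrightarrow L^{p+1}$, with $\beta\to 0$ as $\|\varepsilon\|_{H^1}\to 0$.

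For \eqref{4.211}, in the absence of noise and cutoffs each summand of $\mathcal{G}$ is conserved along the deterministic NLS, so $\frac{d}{dt}\mathcal{G}(t)$ splits into cutoff-driven and noise-driven pieces. Each cutoff term picks up one of $\partial_t\varphi_k$, $\nabla\varphi_k$, or $\nabla^3\varphi_k$, all of size $C/t$ by \eqref{4.3}; substituting $u=\widetilde R+\varepsilon$, the pure-soliton parts are $\mathcal{O}(e^{-\delta_2 t})$ since $\widetilde R$ is exponentially small on the support of $\nabla\varphi_k$, the $\varepsilon$-$\varepsilon$ quadratic parts produce the $\frac{C}{t}\|\varepsilon\|_{H^1}^2$ term, and the mixed $\widetilde R$-$\varepsilon$ parts are controlled after Young's inequality by $|\widetilde R|\cdot|\nabla\varphi_k|\le Ce^{-\delta_2 t}/t$. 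The noise contributions, coming from $b_*\cdot\nabla u$ and $c_*u$ in \eqref{3.1}, carry an overall factor $B_*(t)$ through \eqref{1.30}--\eqref{1.31} and \eqref{3.7}; splitting the integration as $|y|\le|v_k|t/2$ versus $|y|>|v_k|t/2$ as in the treatment of $I_5$ in Proposition \ref{prop3.6} bounds them by $CB_*(t)(\|\varepsilon\|_{H^1}^2+\phi(\delta_1 t)+e^{-\delta_2 t})$.

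The main technical obstacle will be the derivative of the localized momentum $-v_k\cdot\Im\int\nabla u\,\bar u\,\varphi_k\,dx$. Substituting \eqref{3.1} and integrating by parts produces commutators of the form $\int|\nabla u|^2\partial_i\varphi_k\,dx$, which a priori are not $\mathcal{O}(t^{-1}\|\varepsilon\|_{H^1}^2)$: the cross contribution $\int\nabla\widetilde R\cdot\nabla\varepsilon\,\partial_i\varphi_k\,dx$ appears to lose a derivative. The resolution is that $\partial_i\varphi_k$ is supported precisely where $\nabla\widetilde R$ is exponentially small, so Young's inequality absorbs this mixed term into $\frac{C}{t}\|\varepsilon\|_{H^1}^2+\mathcal{O}(e^{-\delta_2 t})$. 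The noise-nonlinearity interaction $\int|u|^{p-1}u\,(\bar b_*\cdot\nabla\bar u)\,dx$ is handled analogously by an integration by parts to avoid losing derivatives on $u$, again exploiting Assumption $(A1)$ to separate the support of $\phi_k$ from the soliton tails.
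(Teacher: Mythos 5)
Your proposal is correct and takes essentially the same route as the paper, which omits the details of this lemma and appeals to the proofs of Propositions 4.1, 4.3 and 4.4 in \cite{RSZ23}: the expansion \eqref{4.22} follows by inserting $u=\widetilde R+\varepsilon$ into \eqref{4.21}, with the rescaled ground state equation \eqref{1.4} cancelling the linear term and Lemma \ref{lem6.2} decoupling the profiles, while \eqref{4.211} follows by differentiating along \eqref{3.1} and splitting the contributions into cutoff-driven terms of size $C/t$ (via \eqref{4.3} and the support separation of $\nabla\varphi_k$ from the solitons) and noise-driven terms of size $B_*(t)$ treated as in the $I_5$ estimate. Your handling of the two delicate points — integrating by parts to avoid derivative loss in the localized momentum and in the $b_*\cdot\nabla u$ pairing, and using the spatial decay of $\phi_l$ against the moving solitons to produce $B_*(t)\phi(\delta_1 t)$ — matches the cited argument.
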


	\begin{remark}
		The quadratic term $H(\varepsilon(t))$ has the crucial coercivity type estimate
		\begin{equation}
			\Vert\varepsilon(t)\Vert_{H^1}^2\le CH(\varepsilon(t))
			+C\big(|\mathbf{a}^+(t)|^2+|\mathbf{a}^-(t)|^2\big),~~t\in \left[T^*,T\right]
			\label{4.41}
		\end{equation}
		for some $C>0$.
		This can be proved by using the coercivity of the linearized operator in \eqref{6.1},
		the orthogonal conditions in \eqref{3.5},
		and analogous arguments as in the proof of the 1D case in \cite[Appendix B]{MMT06}.
	\end{remark}
	
	We are now ready to prove Proposition \ref{coe}.
	\begin{proof}[Proof of Proposition \ref{coe}]
		  In view of Proposition \ref{prop3.5}, 
        we can take a deterministic small 
        constant $r(>0)$ 
        such that for any $\boldsymbol{a}^-\in{B}_{\mathbb{R}^K}\left(r\right)$, there exists a unique $\boldsymbol{b}\in \mathbb{R}^{2K}$ such that  
		\begin{equation}\label{esti-aT-b}
			\mathbf{a}^+(T)={\bf 0}, \quad \mathbf{a}^-(T)=\boldsymbol{a}^-,\quad
			{\rm and}\ \ |\boldsymbol{b}|\le{C}|\boldsymbol{a}^-|\le Cr.
		\end{equation} 
        Then, we take $r$ possibly smaller,  
        a random time $T$ large enough 
		and $T^*$ close to $T$,  
        such that the geometrical decomposition 
        in Proposition \ref{prop3.2}, 
        \eqref{3.8} and \eqref{3.9} hold.

        Using the coercivity estimate \eqref{4.41}, for any $t\in[T^*,T]$, one has
		\begin{align}
			\Vert\varepsilon(t)\Vert_{H^1}^2\le C|H(\varepsilon(T))|+C|H(\varepsilon(t))-H(\varepsilon(T))|
			+C \big(|\mathbf{a}^+(t)|^2+|\mathbf{a}^-(t)|^2\big).
			\label{3.100}
		\end{align}
		
		We shall estimate the right-hand side of \eqref{3.100}.
		By Proposition \ref{prop3.6},
			\begin{equation}\label{esti-ak+}
			\left|\dot a_{k}^+(t)- e_0\left(w_k\right)^{-2}a_{k}^+(t)\right|
			\leq C(\Vert\varepsilon(t)\Vert_{H^1}^{p\wedge2}
			+ B_*(t)\phi(\delta_1t)
			+ e^{-\delta_2t}). 
		\end{equation}
		Then,
		it follows from \eqref{esti-ak+}, $\mathbf{a}^+(T)={\bf 0}$ and Gronwall's inequality that for any $t\in[T^*,T]$,
		\begin{equation}
			|a_{k}^+(t)|\leq C\int_{t}^{\infty}\Big(\|\varepsilon(s)\|_{H^1}^{p\wedge2}+B_*(s)\phi(\delta_1s)\Big)ds+Ce^{-\delta_2t}.   \label{4.101}
		\end{equation} 
		
		 Moreover,
		by \eqref{time T} and \eqref{esti-aT-b},
		\begin{equation}
			\left|H(\varepsilon(T))\right|\le C\Vert\varepsilon(T)\Vert_{H^1}^2\le C|\boldsymbol{b}|^2\le C|\boldsymbol{a}^-|^2.
			\label{5.15}
		\end{equation} 
		
		 Regarding the $|H(\varepsilon(t))-H(\varepsilon(T))|$ term, by the expansion \eqref{4.22}, for any $t\in[T^*,T]$,
		\begin{align}
			\left|H(\varepsilon(t))-H(\varepsilon(T))\right|
			\le
			Ce^{-\delta_2t}+\beta\Vert\varepsilon(t)\Vert_{H^1}^2+\left|\mathcal{G}(t)-\mathcal{G}(T)\right|,
			\label{5.14}
		\end{align}
		where $\beta\to 0$ as $\Vert\varepsilon\Vert_{H^1}^2\to 0$.
		Integrating both sides of \eqref{4.211} on $[t,T]$ we have
		\begin{equation}
			\left|\mathcal{G}(t)-\mathcal{G}(T)\right|\leq
			C \int_{t}^{T} \frac{1}{s}\left(\Vert\varepsilon(s)\Vert_{H^1}^2+e^{-\delta_2s}\right)ds
			+ C \int_{t}^{T}B_*(s)\left(\Vert\varepsilon(s)\Vert_{H^1}^2+\phi(\delta_1s)+e^{-\delta_2s}\right)ds.\label{5.13}
		\end{equation}
		Hence, combining \eqref{5.14} and \eqref{5.13} together and using \eqref{3.7} we  derive that
		\begin{align}
			\left|H(\varepsilon(t))\!-\!H(\varepsilon(T))\right|\le C\int_{t}^{\infty}\Big(\frac{1}{s}\!+\!B_*(s)\Big)\Vert\varepsilon(s)\Vert_{H^1}^2ds+C\int_{t}^{\infty}B_*(s)\phi(\delta_1s)ds
			+Ce^{-\delta_2t}+\beta\Vert\varepsilon(t)\Vert_{H^1}^2.
			\label{4.102}
		\end{align} 
		
	 Therefore, plugging \eqref{4.101}, \eqref{5.15} and \eqref{4.102} into \eqref{3.100} we obtain \eqref{4.103}. 
	\end{proof}

	\section{Uniform estimates of approximating solutions}\label{sec4}

	For every $n\in \mathbb{N}$, 
    we consider the approximating equation
	\begin{equation}
		\begin{cases}
			&i\partial_tu_n+(\Delta+b_*\cdot\nabla+c_*)u_n+|u_n|^{p-1}u_n=0,\\
			&u_n(n)=R(n)+i\sum\limits_{k,\pm}b_{k,n}^\pm Y_k^\pm(n).
		\end{cases}
		\label{5.1}
	\end{equation}

	Let $\delta_1$, $\delta_2$ be as in Proposition \ref{prop3.4}, 
    and 
	 \begin{equation}
			\delta_3:=\frac{1}{2} \min_{1\le k\le K}\{e_0(w_k)^{-2}\},
            \label{def-delta3}
		\end{equation}
		with $e_0(>0)$ being the eigenvalue of the linearized Schr\"odinger operator in \eqref{1.10},
		and $w_k$ the frequency of the soliton \eqref{1.11}. Set 
	\begin{equation}
		\widetilde{\delta}:=
		\begin{cases}
			\frac{1}{2}(\delta_1\wedge\delta_2\wedge\delta_3),&\text{in Case (I)};\\
			\delta_1,&\text{in Case (II)}.
		\end{cases}
		\label{3.51}
	\end{equation}
	
	The main result of this section is the following crucial uniform estimate, 
    which shows that the approximating  equation \eqref{5.1} can be solved backward up to a universal time $T_0$, uniformly in $n$.
	
	\begin{theorem}[Uniform estimate]\label{th5.1}
		Let $\widetilde{\delta}$ be as in \eqref{3.51} and $\nu_0$ as in \eqref{def-v0} below.
		 Assume $(A_0)$ and $(A_1)$ with $\nu_*\ge\nu_0$ in Case (II). 
		Then, 
		there exist a positive random time $T_0$ 
        and  a deterministic constant $C>0$, 
        such that for 
        $\mathbb{P}$-a.e. $\omega\in \Omega$ 
         there exists $N_0(\omega)\ge1$ 
		such that for any $n\ge N_0(\omega)$,  
        $T_0(\omega)<n$ and 
        the following holds:
		
		There exists $\boldsymbol{b}_n(\omega)\in B_{\mathbb{R}^{2K}}(C\phi^{\frac{1}{2}+\frac{1}{4d}}(\widetilde{\delta}n))$
		such that the solution $u_n(\omega)$ to equation (\ref{5.1}) admits the geometrical decomposition
		\eqref{3.3} on $[T_0(\omega),n]$ and satisfies
		\begin{equation}
			\Vert u_n(t,\omega)-R(t,\omega)\Vert_{H^1}\le Ct\phi^{\frac{1}{2}}(\widetilde{\delta}t),
            \quad \forall  t\in[T_0(\omega),n],  \label{5.2}
		\end{equation}
		where $\phi$ is the spatial decay function of noise  given by (\ref{1.17}).
	\end{theorem}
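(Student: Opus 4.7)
The plan is to construct $\boldsymbol{b}_n(\omega)$ via a topological (Brouwer) argument inspired by \cite{CMM11}, with the bootstrap estimates from Propositions \ref{prop3.4}, \ref{prop3.6} and \ref{coe} providing the deterministic machinery, while the noise control from Subsection \ref{Subsec-Noise} sets the universal random time $T_0(\omega)$. First I would fix the random time $T_0(\omega)$ large enough so that (i) $T_0\ge \sigma_1\vee\sigma_2$ so that $B_*(t)\le 1$ and, in Case (II), $B_*(t)\le C/t$ on $[T_0,\infty)$; (ii) the tail integrals $\int_{T_0}^\infty B_*(s)\phi(\delta_1 s)ds$ and $\int_{T_0}^\infty s^{-1}\phi(\widetilde\delta s)ds$ are small enough to absorb implicit constants; (iii) the radius $\phi^{\frac12+\frac1{4d}}(\widetilde\delta n)$ is smaller than the deterministic constant $r$ from Propositions \ref{prop3.5} and \ref{coe} for all $n\ge T_0$. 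The exponent $\nu_0$ in Case (II) is chosen precisely so that these tail integrals close the bootstrap.

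Next, for each $\boldsymbol{a}^-\in B_{\mathbb{R}^K}(\phi^{\frac12+\frac1{4d}}(\widetilde\delta n))$, Proposition \ref{prop3.5} yields a unique $\boldsymbol{b}(\boldsymbol{a}^-)\in B_{\mathbb{R}^{2K}}(\eta)$ with $|\boldsymbol{b}(\boldsymbol{a}^-)|\le C|\boldsymbol{a}^-|$ such that $\mathbf{a}^+(n)={\bf 0}$ and $\mathbf{a}^-(n)=\boldsymbol{a}^-$. Let $u_n$ be the corresponding solution to \eqref{5.1} on its maximal backward existence interval, decomposed as in Proposition \ref{prop3.2}. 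Introduce the bootstrap regime
\begin{equation*}
\|\varepsilon(t)\|_{H^1}\le\phi^{\frac12}(\widetilde\delta t),\quad |\mathbf{a}^+(t)|\le\phi^{\frac12}(\widetilde\delta t),\quad \sum_{k=1}^K(|\alpha_k(t)-\alpha_k^0|+|\theta_k(t)-\theta_k^0|)\le t\phi^{\frac12}(\widetilde\delta t),\quad |\mathbf{a}^-(t)|\le\phi^{\frac12+\frac1{4d}}(\widetilde\delta t),
\end{equation*}
and let $T^*(\boldsymbol{a}^-,n)$ be the infimum of times for which all four hold on $[T^*,n]$. The main step is to show that on $[T^*,n]$ the first three estimates are strictly improved: inserting the bootstrap into Proposition \ref{coe} gives $\|\varepsilon(t)\|_{H^1}^2\le \tfrac12\phi(\widetilde\delta t)$ thanks to the smallness of the tail integrals and the $B_*(s)\le C/s$ estimate \eqref{B*-bdd} (which is why $t^{-1}$ decay in \eqref{5.25} is essential in Case (II)); integrating \eqref{3.11} from $t$ to $n$ strictly improves the modulation-parameter estimate; and integrating \eqref{3.61} backward from $\mathbf{a}^+(n)={\bf 0}$ via Gronwall on the stable direction $a_k^+$ strictly improves the $|\mathbf{a}^+|$ bound using $\|\varepsilon\|_{H^1}^{p\wedge 2}\le\phi^{\frac{p\wedge 2}{2}}(\widetilde\delta t)$ and the choice $\widetilde\delta\le\delta_3$ in Case (I), resp.\ the polynomial decay rate $\nu_*\ge\nu_0$ in Case (II).

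The hard step, and the one that requires a topological argument rather than continuity, is that the bound on the \emph{unstable} direction $\mathbf{a}^-$ cannot be improved by Gronwall: integrating \eqref{3.61} for $a_k^-$ backward from $n$ produces the exponentially growing factor $e^{e_0(w_k)^{-2}(n-t)}$, which is incompatible with a direct bootstrap. Instead, I would argue by contradiction: assume that for every $\boldsymbol{a}^-\in B_{\mathbb{R}^K}(\phi^{\frac12+\frac1{4d}}(\widetilde\delta n))$ there is a first exit time $T^*(\boldsymbol{a}^-,n)$ at which $|\mathbf{a}^-(T^*)|=\phi^{\frac12+\frac1{4d}}(\widetilde\delta T^*)$ (since the other three estimates are strict). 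Define
\begin{equation*}
\Phi_n:B_{\mathbb{R}^K}(\phi^{\frac12+\frac1{4d}}(\widetilde\delta n))\longrightarrow S_{\mathbb{R}^K}(\phi^{\frac12+\frac1{4d}}(\widetilde\delta n)),\qquad \boldsymbol{a}^-\longmapsto \phi^{\frac12+\frac1{4d}}(\widetilde\delta n)\,\frac{\mathbf{a}^-(T^*(\boldsymbol{a}^-,n))}{|\mathbf{a}^-(T^*(\boldsymbol{a}^-,n))|}.
\end{equation*}
The transversality computation, based on \eqref{3.61} together with the fact that at $T^*$ the other terms are strictly smaller than $e_0(w_k)^{-2}|a_k^-(T^*)|$ (by the strict improvement of the other three bootstrap quantities and the definition of $\widetilde\delta$), shows that $T^*(\boldsymbol{a}^-,n)$ depends continuously on $\boldsymbol{a}^-$ and $\Phi_n$ restricts to the identity on the boundary sphere (at $T^*=n$, $\mathbf{a}^-(n)=\boldsymbol{a}^-$). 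This contradicts the no-retraction theorem, so there must exist $\boldsymbol{b}_n(\omega)$ as claimed. The $H^1$ estimate \eqref{5.2} then follows by combining the three refined bootstrap bounds via the triangle inequality and the exponential decay of $Q$.

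The main obstacle is calibrating $\widetilde\delta$, $\nu_0$, and the exponent $\frac12+\frac1{4d}$ on $\mathbf{a}^-$ so that: (a) the $\|\varepsilon\|_{H^1}^{p\wedge 2}$ term in \eqref{3.61} does not overwhelm the noise term $B_*\phi(\delta_1 t)$ even when $p<2$, and (b) the transversality of $\Phi_n$ at the exit time is strict (i.e., $e_0(w_k)^{-2}|a_k^-|$ dominates the right-hand side of \eqref{3.61}). In Case (II) the polynomial decay forces the sharp choice of $\nu_0$ via the requirement that $\int_t^\infty s^{-1}\phi(\widetilde\delta s)ds\lesssim\phi(\widetilde\delta t)$, which explains the $\nu_*\ge\nu_0$ hypothesis.
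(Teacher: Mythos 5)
Your proposal is correct and follows essentially the same route as the paper: bootstrap improvement of $\varepsilon$, the modulation parameters and $\mathbf{a}^+$ (the paper's Proposition \ref{prop5.2}), combined with a topological argument on the exit time of $\mathbf{a}^-$ (the paper's Proposition \ref{prop5.4}), where your map $\Phi_n$ coincides with the paper's $\Lambda$ and your appeal to the no-retraction theorem is equivalent to the paper's use of Brouwer's fixed point theorem applied to $-\Lambda$. The transversality computation you sketch is exactly the paper's differential inequality for $\mathcal{N}(t,\boldsymbol{a}_n^-)$, and your calibration remarks (notably that $p\wedge 2>1+\tfrac{1}{2d}$ must beat the exponent $\tfrac12+\tfrac1{4d}$, and that $\nu_*\geq\nu_0$ supplies the small factor $1/\nu_*$ in the tail integrals) match the paper's choices of $\widetilde\delta$, $\nu_0$ and the random times $\tau_j$.
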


	\begin{remark}
		We note that
the temporal convergence rate of the approximating solution $u_n$, 
as well as the smallness of 
the modulated parameter $\mathbf{b}$, 
are dictated by the spatial decay function of 
the noise. 
	\end{remark}
	
	The proof of Theorem \ref{th5.1} mainly proceeds in two steps. First in Subsection \ref{Subsec-Bootstrap},
	we prove the bootstrap estimates of the reminder $\varepsilon_n$, 
    the modulation parameters $(\alpha_n,\theta_n)$,  and the unstable direction 
    $\mathbf{a}_n^+$ under 
    an a-priori control of $\mathbf{a}_n^-$.
	Then, in Subsection \ref{Subsection-an-},
	we control the remaining parameter $\mathbf{a}_n^-$ by using topological arguments. 

    Let us mention that, 
    in the sequel, 
    we mainly consider $\boldsymbol{a}_n^-\in B_{\mathbb{R}^K}(\phi^{\frac{1}{2}+\frac{1}{4d}}(\widetilde{\delta}n))$ 
    with $n$ large enough 
such that 
$\phi^{\frac{1}{2}+\frac{1}{4d}} (\widetilde{\delta}n) \leq r$  
and $\boldsymbol{b}\in B_{\mathbb{R}^{2K}}(\eta)$, 
where $r,\eta$ are small deterministic constants from Propositions \ref{prop3.5} 
and \ref{prop3.2}, respectively, 
   so that 
    the geometrical decomposition 
    and the final condition in 
    two propositions 
    hold.  
   In particular, 
         for any
		$\boldsymbol{a}_n^- \in B_{\mathbb{R}^K}(\phi^{\frac 12+\frac 1{4d}}(\wt \delta n)))$,
		there exists a unique small vector  $\boldsymbol{b}_n\in\mathbb{R}^{2K}$ such that
		\begin{align}  
			\label{an-final} 
            & \mathbf{a}_n^+(n) = {\bf 0},\quad
			\mathbf{a}_n^-(n) = \boldsymbol{a}_n^-,  \\ 
            \label{bn-esti} 
            & |\boldsymbol{b}_n|
			\leq C |\boldsymbol{a}_n^-|
			\leq C \phi^{\frac{1}{2}+\frac{1}{4d}}(\wt \delta n)
			\leq C \phi^{\frac{1}{2}+\frac{1}{4d}}(\wt \delta t),\ \forall t\le n,
		\end{align} 
		   where $C>0$ is a deterministic positive constant independent of $n$.

	\subsection{Bootstrap estimates} \label{Subsec-Bootstrap}
	
	The main bootstrap estimates of this subsection
	are stated as follows.
	
	\begin{proposition}[Bootstrap estimates of $\varepsilon_n$, $\alpha_n$, $\theta_n$ and $\mathbf{a}_n^+$ ]\label{prop5.2}
		Assume the conditions of Theorem \ref{th5.1} to hold. 
		Then, 
        there exists a random time 
        $\tau^* (>0)$,  
        such that $\mathbb{P}$-a.e. $\omega\in \Omega$  
        there exists $N_0(\omega)\ge1$ 
		such that for any $n\ge N_0(\omega)$,  
        $\tau^*(\omega)<n$ 
        and  the following holds:
		
		Let $t^*(\omega)\in(\tau^*(\omega), n]$ be such that for any $t\in[t^*, n]$, $u_n(\omega)$ satisfies
		\begin{equation}
			\Vert u_n(t,\omega)-R(t,\omega)\Vert_{H^1}\le \frac{1}{2}\delta_* 
			\label{4p}
		\end{equation} 
		with $\delta_*$ being 
		the small number from Lemma \ref{lem6.3}, 
		and the following estimates hold:
		\begin{equation}
			\Vert \varepsilon_n(t,\omega)\Vert_{H^1}
            \le\phi^{\frac{1}{2}}(\widetilde{\delta}t),~~~ |\mathbf{a}_n^+(t,\omega)|\le \phi^{\frac{1}{2}}(\widetilde{\delta}t),~~~ |\mathbf{a}_n^-(t,\omega)|\le \phi^{\frac{1}{2}+\frac{1}{4d}}(\widetilde{\delta}t),
			\label{5.3}
		\end{equation}
		\begin{equation}
			\sum\limits_{k=1}^{K}(|\alpha_{n,k}(t,\omega)-\alpha_k^0|+|\theta_{n,k}(t,\omega)-\theta_k^0|)\le t\phi^{\frac{1}{2}}(\widetilde{\delta}t).
			\label{5.4}
		\end{equation}
		Then,
		there exists a smaller time $t_*\in(\tau^*,t^*)$,
		such that
		$u_n$ admits the geometrical decomposition \eqref{3.3} on the larger time interval $[t_*, n]$, and the improved estimates hold:
		\begin{equation}
			\Vert u_n(t,\omega)-R(t,\omega)\Vert_{H^1}\le \frac{1}{4}\delta_*, \label{5.5}
		\end{equation}
		\begin{equation}
			\Vert \varepsilon_n(t,\omega)\Vert_{H^1}\le\frac{1}{2}\phi^{\frac{1}{2}}(\widetilde{\delta}t),~~~ |\mathbf{a}_n^+(t,\omega)|\le \frac{1}{2}\phi^{\frac{1}{2}}(\widetilde{\delta}t),
			\label{5.6}
		\end{equation}
		\begin{equation}
			\sum\limits_{k=1}^{K}(|\alpha_{n,k}(t,\omega)-\alpha_k^0|
            +|\theta_{n,k}(t,\omega)-\theta_k^0|)\le\frac{1}{2}t\phi^{\frac{1}{2}}(\widetilde{\delta}t),  \quad \forall t\in [t_*, n].\label{5.7}
		\end{equation}
	\end{proposition}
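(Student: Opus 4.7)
The plan is to close each of the bootstrap estimates (5.3)--(5.4) with a strictly better constant by combining the coercivity control of the remainder (Proposition 4.6), the modulation equations (Proposition 3.4), and the ODE for the unstable directions (Proposition 3.6). Crucially, the a priori bound on $\mathbf{a}_n^-$ in (5.3) is \emph{not} improved; only $\ve_n$, $\mathbf{a}_n^+$, and the modulation parameters are. For $n$ large enough, the smallness of the modulated final data gives $\mathbf{a}_n^+(n)=\mathbf{0}$, $|\mathbf{a}_n^-(n)|\le \phi^{\frac12+\frac1{4d}}(\widetilde\delta n)$, and $\sum_k(|\alpha_{n,k}(n)-\alpha_k^0|+|\theta_{n,k}(n)-\theta_k^0|)+\|\ve_n(n)\|_{H^1}\le C\phi^{\frac12+\frac1{4d}}(\widetilde\delta n)$ by (3.6) of Proposition 3.2 applied to $|\boldsymbol b_n|\le C|\boldsymbol a_n^-|$.

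For the remainder (Step 1), I insert the a priori bounds (5.3) into (4.103). The noise term $C\int_t^\infty B_*(s)\phi(\delta_1 s)\,ds$ is $o(\phi(\widetilde\delta t))$ in Case (I) by exponential decay, and $O(t^{-\nu_*-1})$ in Case (II) by $B_*\le C/s$ from Lemma 3.2. The term with $|\mathbf{a}_n^-|^2$ contributes $C\phi^{1+\frac1{2d}}(\widetilde\delta t)\ll \frac14\phi(\widetilde\delta t)$ for $t$ large. The delicate term is the super-quadratic one, $C(\int_t^\infty \|\ve_n\|_{H^1}^{p\wedge 2}\,ds)^2$, which under the a priori bound becomes $O((\int_t^\infty\phi^{(p\wedge 2)/2}(\widetilde\delta s)\,ds)^2)$. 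In Case (II) this equals $O(t^{-\nu_*(p\wedge 2)+2})$, which is strictly smaller than $\phi(\widetilde\delta t)=Ct^{-\nu_*}$ precisely when $\nu_*(p\wedge 2-1)>2$; this is the origin of the constraint $\nu_*\ge\nu_0$. Absorbing the small $\beta\|\ve(t)\|_{H^1}^2$ term into the left-hand side then yields $\|\ve_n(t)\|_{H^1}\le \tfrac12\phi^{1/2}(\widetilde\delta t)$.

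For $\mathbf{a}_n^+$ (Step 2), Proposition 3.6 with $\mathbf{a}_n^+(n)=\mathbf{0}$ and the backward Duhamel formula, as in (4.101), gives
\[ |a_{n,k}^+(t)|\le C\int_t^\infty\bigl(\|\ve_n(s)\|_{H^1}^{p\wedge 2}+B_*(s)\phi(\delta_1 s)+e^{-\delta_2 s}\bigr)\,ds, \]
where the factor $e^{-e_0w_k^{-2}(s-t)}\le 1$ is used. Using (5.3) and the noise control, all three contributions are of higher order than $\phi^{1/2}(\widetilde\delta t)$ once $\nu_*\ge\nu_0$, so $|\mathbf{a}_n^+(t)|\le\tfrac12\phi^{1/2}(\widetilde\delta t)$. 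For the modulation parameters (Step 3), I integrate (3.11) from $t$ to $n$ and combine with the control of $(\alpha_{n,k}(n)-\alpha_k^0,\theta_{n,k}(n)-\theta_k^0)$ coming from (3.6):
\[ \sum_k\bigl(|\alpha_{n,k}(t)-\alpha_k^0|+|\theta_{n,k}(t)-\theta_k^0|\bigr)\le C\phi^{\frac12+\frac1{4d}}(\widetilde\delta n)+C\int_t^n\bigl(\|\ve_n\|_{H^1}+B_*\phi(\delta_1 s)+e^{-\delta_2 s}\bigr)\,ds. \]
In Case (II) with $\nu_*/2>1$, $\int_t^n\phi^{1/2}(\widetilde\delta s)\,ds\le Ct^{1-\nu_*/2}=C\,t\,\phi^{1/2}(\widetilde\delta t)$, which is precisely why the factor $t$ appears in (5.4) and (5.7); the noise and remainder integrals are absorbed similarly. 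Thus we obtain $\sum_k(|\alpha_{n,k}(t)-\alpha_k^0|+|\theta_{n,k}(t)-\theta_k^0|)\le\tfrac12 t\phi^{1/2}(\widetilde\delta t)$.

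Combining Steps 1 and 3, $\|u_n(t)-R(t)\|_{H^1}\le \|\widetilde R(t)-R(t)\|_{H^1}+\|\ve_n(t)\|_{H^1}\le C\,t\,\phi^{1/2}(\widetilde\delta t)\le \tfrac14\delta_*$ for $t\ge\tau^*$ with $\tau^*$ large, giving (5.5). The strictly improved bounds, combined with the continuity of $t\mapsto u_n(t)$ in $H^1$ and Lemma 2.6, then allow to extend the geometrical decomposition and all estimates to a slightly larger interval $[t_*,n]$ with $t_*<t^*$, as desired. The main technical obstacle is Step 1: tracking the interaction between the super-quadratic term $(\int_t^\infty \|\ve_n\|^{p\wedge 2})^2$ and the $1/(4d)$-margin chosen in the a priori bound on $\mathbf{a}_n^-$, so as to identify the sharp value of $\nu_0$ in \eqref{def-v0}; heuristically $\nu_0$ must at least exceed $\max\{2,\,2/(p\wedge 2 - 1)\}$ with an additional correction from the $\frac1{4d}$-exponent.
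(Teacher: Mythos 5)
Your proposal is correct and follows essentially the same route as the paper's proof: the remainder is closed via the coercivity estimate \eqref{4.103}, $\mathbf{a}_n^+$ via backward integration of \eqref{3.61} from $\mathbf{a}_n^+(n)=\mathbf{0}$ as in \eqref{4.101}, the modulation parameters via integrating \eqref{3.11} together with the endpoint control \eqref{time T} and $|\boldsymbol{b}_n|\le C|\boldsymbol{a}_n^-|$, with $\mathbf{a}_n^-$ left as an a priori input, the decay $B_*(t)\lesssim t^{-1}$ from Lemma \ref{Lem-B*} handling the noise in Case (II), and $\nu_*\ge\nu_0$ (built on $p\wedge 2>1+\tfrac{1}{2d}$) absorbing the super-quadratic term. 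The only presentational difference is that the paper first extends the decomposition to $[t_*,n]$ by continuity with doubled constants \eqref{5.8}--\eqref{5.9} and then runs the estimates there, whereas you state the extension at the end; the substance is the same.
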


\begin{remark}
We remark that the constants in estimates   
 \eqref{5.5}-\eqref{5.7} 
are smaller than those in 
\eqref{4p}-\eqref{5.4}. 
This is because 
in the following analysis one 
can gain small factors $o(1)$ 
before the deterministic constants $C$. 
The small factors are contributed by 
the exponential and polynomial decay of time, 
the tail of noise $B_*(t)$ 
and $\nu_*^{-1}$ with $\nu_*$ large enough. 

We also note that 
the bootstrap estimates above require a-priori 
control of $\mathbf{a}_n^-(t)$, 
which cannot be improved in estimates 
\eqref{5.5}-\eqref{5.7}. 
Later in Proposition \ref{prop5.4}, 
we shall use topological arguments to 
choose a suitable
final data $\boldsymbol{a}_n^-$ 
to obtain the required a-priori control.

\end{remark}

	\begin{proof}[Proof of Proposition \ref{prop5.2}]
		We define the 
        random time $\tau^*$ by 
       \begin{equation}\label{def-Ta-}
		\tau^*:=\begin{cases}
			\max\{M, \sigma_1, \tau_j, 
            j=1,2,3\},&\text{in Case (I)};\\
			\max\{M, \sigma_1, 
            \sigma_2, \tau_j, 
            j=1,4,5\},&\text{in Case (II)},
		\end{cases}
	\end{equation} 
    and let $N_0=[\tau^*]+1$, where 
    $M$ is the deterministic large time 
    from the geometrical decomposition in   Proposition \ref{prop3.2},  
    $\sigma_1$ 
    and $\sigma_2$ are the random times in 
    Subsection \ref{Subsec-Noise},   
    and the random times 
    $\tau_j$, $1\leq j\leq 5$, 
    will be determined below. 
    To ease notations, 
    we omit the dependence of $\omega$ 
    in the sequel. 
    
    Note that, 
    under the above bootstrap estimates,  
    the estimates in Section \ref{subsec cm} 
    are all valid after $\tau^*$ 
    and the corresponding constants are deterministic.  
To be precise, in view of  
		\eqref{5.3}, \eqref{5.4}, and the continuity of solutions in $H^1$,
		one can take 
        $t_* \in (\tau^*,t^*)$ 
        close to $t^*$, 
        such that the geometrical decomposition \eqref{3.3}
		and the following estimates hold on $[t_*,n]$:
		\begin{equation}
			\Vert \varepsilon_n(t)\Vert_{H^1}\le2\phi^{\frac{1}{2}}(\widetilde{\delta}t),\ \
			|\mathbf{a}_n^+(t)|\le 2\phi^{\frac{1}{2}}(\widetilde{\delta}t), \ \
			|\mathbf{a}_n^-(t)|\le 2\phi^{\frac{1}{2}+\frac{1}{4d}}(\widetilde{\delta}t) \label{5.8}
		\end{equation}
		\begin{equation}
			\sum\limits_{k=1}^{K}(|\alpha_{n,k}(t)-\alpha_k^0|+|\theta_{n,k}(t)-\theta_k^0|)\le 2t\phi^{\frac{1}{2}}(\widetilde{\delta}t).
			\label{5.9}
		\end{equation}  
        Then, let 
        \begin{equation} \label{tau0}
			\tau_1:=\inf\Big\{t>0: 2\phi^{\frac{1}{2}}(\widetilde{\delta}t)\le1, 
            \ B_*(t)+2t\phi^{\frac{1}{2}}(\widetilde{\delta}t)\le\frac{1}{10}{\rm min}\{1,w_k,\alpha_k^0\}\Big\}. 
		\end{equation}  
       By the definition \eqref{def-Ta-}, 
       $t_*> \tau^* \geq \tau_1$. 
       We thus  infer from estimates \eqref{5.8} 
    and \eqref{5.9} that  
    the upper bounds in \eqref{3.8} and \eqref{3.9} 
    are valid on $[t_*,n]$. 
     One also can apply Proposition \ref{prop3.4} to obtain
		\begin{equation}
			\sum\limits_{k=1}^{K}(|\dot\alpha_{n,k}(t)|+|\dot\theta_{n,k}(t)|)\le{C}\left(\Vert\varepsilon_n(t)\Vert_{H^1}+B_*(t)\phi(\delta_1t)+e^{-\delta_2t}\right),
			\ \ \forall t\in [t_*, n].
			\label{5.10}
		\end{equation}

		Below we consider Case (I) and Case (II) separately to derive the bootstrap estimates \eqref{5.5}-\eqref{5.7}.
		
		\medskip
		{\bf Case (I):}  
		First, by \eqref{5.8} and \eqref{5.9}, for any $t\in[t_*,n]$,
		\begin{align}
				\Vert u_n(t)-R(t)\Vert_{H^1}\le&\Vert R(t)-\sum\limits_{k=1}^K\widetilde{R}_{n,k}(t)\Vert_{H^1}+\Vert\varepsilon_n\Vert_{H^1}\nonumber\\
				\le&C\sum\limits_{k=1}^K(|\alpha_{n,k}(t)-\alpha_k^0|+|\theta_{n,k}(t)-\theta_k^0|)+\Vert\varepsilon_n\Vert_{H^1}\nonumber\\
				\le&2Cte^{-\frac{1}{2}\widetilde{\delta}t}+2e^{-\frac{1}{2}\widetilde{\delta}t},
				\label{5.21}
		\end{align}
		where $\wt R_{n,k}$ is the approximating soliton profile from the geometrical decomposition \eqref{3.4} with $\alpha_k(t)$, $\theta_k(t)$ replaced by $\alpha_{n,k}(t)$, $\theta_{n,k}(t)$.
		Setting
			 \begin{equation}\label{tau1}
				\tau_2:=\inf \Big\{t\geq 
                \frac{2}{\widetilde{\delta}}: 
                \ 2Cte^{-\frac{1}{2}\widetilde{\delta}t}+2e^{-\frac{1}{2}\widetilde{\delta}t}\le\frac{1}{4}\delta_*\Big\},
			\end{equation}  
        where $\delta_*$ the small number from Lemma \ref{lem6.3}. 
        Then, 
        since $\tau^* \geq \tau_1\vee\tau_2$,  
         estimate \eqref{5.5} is verified on $[t_*,n]$. 
		
		Regarding the remaining estimates \eqref{5.6} and \eqref{5.7}, 
		using \eqref{4.101}, \eqref{5.8} and the inequality 
        \begin{equation}\label{p2>}
            p\wedge2>1+\frac{1}{2d},\ \forall d\ge1,
        \end{equation}
		we have that for every $1\le k\le K$ and any $t\in[t_*,n]$,
		 \begin{align}
				|a_{n,k}^+(t) |\le&C\int_t^{+\infty}e^{-\frac{p\wedge2}{2}\widetilde{\delta}s}+B_*(s)e^{-\delta_1s}ds+Ce^{-\delta_2t} \notag \\ 
				\le& C\bigg(\frac{2}{(p\wedge2)\widetilde{\delta}}e^{\frac{1-p\wedge2}{2}\widetilde{\delta}t}+\frac{1}{\widetilde{\delta}}e^{-\frac{1}{2}\widetilde{\delta}t}+e^{-\frac{1}{2}\widetilde{\delta}t}\bigg)
				e^{-\frac{1}{2}\widetilde{\delta}t}\notag\\
                \le&C\bigg(\frac{3}{\widetilde{\delta}}e^{-\frac{1}{4d}\widetilde{\delta}t}+e^{-\frac{1}{4d}\widetilde{\delta}t}\bigg)e^{-\frac{1}{2}\widetilde{\delta}t}.   \label{5.17}
		\end{align} 
		
		Moreover,  for any $t\in[t_*,n]$, integrating \eqref{5.10} on $[t,n]$  and using \eqref{time T},
			\eqref{3.7} \eqref{bn-esti}, \eqref{5.8} we derive 
	 \begin{align}
				&\sum\limits_{k=1}^K(|\alpha_{n,k}(t)-\alpha_k^0|+|\theta_{n,k}(t)-\theta_k^0|)\nonumber\\
				\le&\sum\limits_{k=1}^K(|\alpha_{n,k}(t)-\alpha_{n,k}(n)|+|\alpha_{n,k}(n)-\alpha_k^0|+|\theta_{n,k}(t)-\theta_{n,k}(n)|+|\theta_{n,k}(n)-\theta_k^0|)\nonumber\\
				\le&C\int_{t}^{+\infty}e^{-\frac{1}{2}\widetilde{\delta}s}+B_*(s)e^{-\delta_1s}+e^{-\delta_2s}ds+C|\boldsymbol{b}_n|\nonumber\\
				\le&C\Big(\frac{2}{\widetilde{\delta}t}+\frac{1}{t}e^{-\frac{1}{4d}\widetilde{\delta}t}\Big) te^{-\frac{1}{2}\widetilde{\delta}t}.\label{5.18}
		\end{align} 
		
		At last, in view of Proposition \ref{coe},  
		estimates \eqref{bn-esti}, \eqref{5.8} and \eqref{p2>}, 
		we derive that for any $t\in[t_*,n]$,
		 \begin{align}
				\Vert\varepsilon_n(t)\Vert_{H^1}^2
				\le&C\int_t^{+\infty}\Big(\frac{1}{s}+B_*(s)\Big)e^{-\widetilde{\delta}s}ds+C\bigg(\int_t^{+\infty}e^{-\frac{p\wedge2}{2}\widetilde{\delta}s}ds\bigg)^2+C\bigg(\int_{t}^{+\infty}B_*(s)e^{-\widetilde{\delta}s}ds\bigg)^2\nonumber\\
				&+Ce^{-(1+\frac{1}{2d})\widetilde{\delta}t}+Ce^{-\delta_2t}+\beta\|\varepsilon_n(t)\|^2_{H^1}\nonumber\\[5pt]
				\le&C
				\bigg(\frac{1}{\widetilde{\delta}t}+\frac{1}{\widetilde{\delta}}B_*(t)+\frac{4}{(p\wedge2)^2\widetilde{\delta}^2}e^{(1-p\wedge2)\widetilde{\delta}t}+\frac{1}{\widetilde{\delta}^2}e^{-\widetilde{\delta}t}+e^{-\frac{1}{2d}\widetilde{\delta}t}\bigg)e^{-\widetilde{\delta}t}+\beta\|\varepsilon_n(t)\|^2_{H^1}\notag\\
                \le&C
				\bigg(\frac{1}{\widetilde{\delta}t}+\frac{1}{\widetilde{\delta}}B_*(t)+\frac{5}{\widetilde{\delta}^2}e^{-\frac{1}{4d}\widetilde{\delta}t}+e^{-\frac{1}{4d}\widetilde{\delta}t}\bigg)e^{-\widetilde{\delta}t}+\beta\|\varepsilon_n(t)\|^2_{H^1}.\label{5.19}
		\end{align} 
		 Since $\beta\to0$ as $\|\varepsilon_n(t)\|_{H^1}\to0$,
			there exists a deterministic small constant $0<\varepsilon^*<1$, such that  $\beta\le\frac{1}{2}$ 
            if $\|\varepsilon_n(t)\|_{H^1}\le\varepsilon^*$. 
			
            Thus, let 
			 \begin{equation}\label{tau2}
				\tau_3:=\inf \bigg\{ 
                t\geq  1+\frac{2}{\widetilde{\delta}}{\rm ln}\big(\frac{2}{\varepsilon^*}\big):
                \ C\bigg(\frac{2}{\widetilde{\delta}t}+\frac{1}{\widetilde{\delta}}B_*(t)+\Big(\frac{5}{\widetilde{\delta}^2}+\frac{3}{\widetilde{\delta}}+1\Big)e^{-\frac{\widetilde{\delta}t}{4d}}
				\bigg)\le\frac{1}{8}\bigg\}. 
			\end{equation} 
		Note that $\tau_3$ is finite almost surely, 
        due to the vanishing of the noise
			$B_*$ at infinity in \eqref{limB*}. 
		In view of 
        the above estimates \eqref{5.17}-\eqref{5.19} and 
        the definition of $\tau^*$, 
        we consequently get the improved estimates
			\eqref{5.6} and \eqref{5.7} on  $[t_*,n]$.

		\medskip
		{\bf Case (II):}
	  Let us first choose a deterministic large constant $\nu_0$ such that 
			\begin{equation}\label{def-v0}
				\nu_0\ge 4d,\ \ {\rm and}\ \  C\Big(\frac{2}{\nu_0-2}+\frac{\widetilde{\delta}}{\nu_0-1}+\frac{2}{(\nu_0-2)\widetilde{\delta}}+\frac{4}{(\nu_0-2)^2\widetilde{\delta}^2}\Big)\le\frac{1}{16},
			\end{equation} 
        where $C$ is the deterministic constant in estimates \eqref{5.22}-\eqref{5.24} below. In the following we consider any $\nu_*\geq \nu_0$ fixed.

        We  derive from \eqref{5.8} and \eqref{5.9} that,
		as in \eqref{5.21},  for any $t\in[t_*,n]$,
		\begin{align}
				\Vert u_n(t)-R(t)\Vert_{H^1}\le\Vert R(t)-\sum\limits_{k=1}^K\widetilde{R}_{n,k}(t)\Vert_{H^1}+\Vert\varepsilon_n(t)\Vert_{H^1}\le C(t+1)(\widetilde{\delta}t)^{-\frac{\nu_*}{2}}. 
				\label{5.28}
		\end{align}
		Then, let 
		 \begin{equation}\label{tau3}
				\tau_4:=\inf \Big\{t>0: 
                C(t+1)(\widetilde{\delta}t)^{-\frac{\nu_*}{2}}\le\frac{1}{4}\delta_*\Big\},
			\end{equation} 
			where $\delta_*$ is as in Lemma \ref{lem6.3}. 
        Since by the definition \eqref{def-Ta-}, 
       $\tau^*\ge\tau_1\vee \tau_4$, 
         we infer that 
         estimate \eqref{5.5} holds on $[t_*,n]$.

		Moreover, one has, via \eqref{4.101}, \eqref{5.8} and $p\wedge2>1$,
		 \begin{align}
				|a_{n,k}^+(t) |\le &C\bigg(\int_t^{+\infty}(\widetilde{\delta}s)^{-\frac{p\wedge2}{2}\nu_*}+B_*(s)(\widetilde{\delta}s)^{-\nu_*}ds+e^{-\delta_2t}\bigg)\nonumber\\
                \le&C\bigg(\frac{2}{(\nu_*-2)\widetilde{\delta}}(\widetilde{\delta}t)^{\frac{1-p\wedge2}{2}\nu_*+1}+\frac{1}{\nu_*}(\widetilde{\delta}t)^{-\frac{\nu_*}{2}}+(\widetilde{\delta}t)^{\frac{\nu_*}{2}}e^{-\delta_2t}\bigg)(\widetilde{\delta}t)^{-\frac{\nu_*}{2}}.
				\label{5.22}
		\end{align} 
		
		Estimating as in the proof of \eqref{5.18}, for any $t\in[t_*,n]$, integrating \eqref{5.10} on $[t,n]$ we get
		 \begin{align}
				\sum\limits_{k=1}^K(|\alpha_{n,k}(t)\!-\!\alpha_k^0|\!+\!|\theta_{n,k}(t)\!-\!\theta_k^0|)
				\le&C\Big(\int_{t}^{+\infty}(\widetilde{\delta}s)^{-\frac{\nu_*}{2}}+B_*(s)(\widetilde{\delta}s)^{-\nu_*}+e^{-\delta_2s}ds+(\widetilde{\delta}t)^{-(\frac{1}{2}+\frac{1}{4d})\nu_*}\Big) \label{5.23} \\
				\le&C\Big(\frac{2}{\nu_*-2}\!+\!\frac{\widetilde{\delta}}{\nu_*-1}(\widetilde{\delta}t)^{-\frac{\nu_*}{2}-1}\!+\!(\delta_2t)^{-1}(\widetilde{\delta}t)^{\frac{\nu_*}{2}}e^{-\delta_2t}\!+\!t^{-1}(\widetilde{\delta}t)^{-\frac{\nu_*}{4d}}\Big)t(\widetilde{\delta}t)^{-\frac{\nu_*}{2}}. \notag 				
		\end{align} 
		
		An application of Proposition \ref{coe}, \eqref{B*-bdd}, \eqref{bn-esti}, \eqref{5.8} and \eqref{p2>} also gives that for any $t\in[t_*,n]$,
	   \begin{align}
				\Vert\varepsilon_n(t)\Vert_{H^1}^2
				\le&C\int_t^{+\infty}s^{-1}(\widetilde{\delta}s)^{-\nu_*}ds+C\bigg(\int_t^{+\infty}(\widetilde{\delta}s)^{-\frac{p\wedge2}{2}\nu_*}ds\bigg)^2+C\bigg(\int_t^{+\infty}(\widetilde{\delta}s)^{-\nu_*-1}ds\bigg)^2\nonumber\\
				&+C(\widetilde{\delta}t)^{-(1+\frac{1}{2d})\nu_*}+Ce^{-\delta_2t}+\beta\|\varepsilon_n(t)\|^2_{H^1}  \label{5.24} \\
				\le&C\bigg(\frac{1}{\nu_*}\!+\!\frac{1}{\nu_*^2}(\widetilde{\delta}t)^{-\nu_*}\!+\!\frac{4}{(\nu_*\!-\!2)^2\widetilde{\delta}^2}(\widetilde{\delta}t)^{(1-(p\wedge2))\nu_*+2}\!+\!(\widetilde{\delta}t)^{-\frac{1}{2d}\nu_*}\!+\!(\widetilde{\delta}t)^{\nu_*}e^{-\delta_2t}\bigg)(\widetilde{\delta}t)^{-\nu_*}\!+\!\beta\|\varepsilon_n(t)\|^2_{H^1}. \notag 				
		\end{align}

			Thus, let 
			\begin{equation}\label{tau4}
				\tau_5:=\inf\bigg\{t\geq 1+\frac{\nu_*}{\delta_2}+\frac{1}{\widetilde{\delta}}\Big(\frac{2}{\varepsilon^*}\Big)^{\frac{2}{\nu_*}}:\   C\Big(\big(\widetilde{\delta}t\big)^{\nu_*}e^{-\delta_2t}+\big(\widetilde{\delta}t\big)^{-\frac{\nu_*}{4d}}\Big)\le\frac{1}{16}\bigg\},
		\end{equation}
        where $C$ is the deterministic constant from the above estimates \eqref{5.22}-\eqref{5.24}, and $\widetilde{\delta}$ is given by \eqref{3.51}. In view of estimates \eqref{5.22}-\eqref{5.24}, the definition of $\tau^*$ in \eqref{def-Ta-} and the choice of $\nu_*$ in \eqref{def-v0}, 
        we verify estimates \eqref{5.6} and \eqref{5.7} on $[t_*,n]$. 
        The proof is consequently complete. 
	\end{proof}

	\subsection{Topological arguments}
	\label{Subsection-an-}
	
	As mentioned above,  
	Proposition \ref{prop5.2} requires the a-priori control of $\mathbf{a}_n^-$ 
    so that the estimates of $\varepsilon_n$, $\alpha_n$, $\theta_n$ and $\mathbf{a}_n^+$ 
    can be improved.

	In order to control the unstable direction $\mathbf{a}_n^-$, 
	we use a topological argument 
	as in \cite{CMM11}. 
	It is necessary to keep valid 
	the estimate \eqref{5.3} of 
	$\mathbf{a}_n^-$, 
	this motivates the following definition 
	\begin{align}   \label{T0an-def}
		T_0(\boldsymbol{a}_n^-)
		:= \inf \{T\ge\tau^*:  \
		{\rm estimates}\  \eqref{4p}-\eqref{5.4}\
		{\rm hold}\
		{\rm on}\ [T,n]\}.
	\end{align}

	 Let $\nu_*\ge\nu_0$ with $\nu_0$ satisfying  \eqref{def-v0}.  
  We define a universal random time  $T_0$, 
    independent of $n$,  by
 \begin{equation}\label{def-T0}
		T_0:=\begin{cases}
			\max\{M, 
            \sigma_1, 
            \tau_j, j=1,2,3,6\},&\text{in Case (I)};\\
			\max\{M, 
            \sigma_1,\sigma_2, 
            \tau_j, 
            j=1,4,5,7\},&\text{in Case (II)},
		\end{cases}
	\end{equation} 
	where $M$ is the large deterministic time 
    from the geometrical decomposition in Proposition \ref{prop3.2}, 
$\sigma_1$ and $\sigma_2$ 
are the random times 
in Subsection \ref{Subsec-Noise}, 
$\tau_j$, $1\leq j\leq 5$, 
    are defined as in the previous subsection, 
	 \begin{equation}\label{def-S5}
		\tau_6:=\inf\Big\{t>0: 
        C_1e^{(\frac{1}{2}+\frac{1}{4d}-\frac{p\wedge2}{2})\widetilde{\delta}t}\le\frac{\widetilde{\delta}}{4}\Big\},
	\end{equation} 
	and
	 \begin{equation}\label{def-S6}
		\tau_7:=\inf\Big\{t\geq 
        \frac{3\nu_*}{2\delta_3}:
       C_2\Big((\widetilde{\delta}t)^{(\frac{1}{2}+\frac{1}{4d}-\frac{p\wedge2}{2})\nu_*}+(\widetilde{\delta}t)^{(\frac{1}{2}+\frac{1}{4d})\nu_*}e^{-\delta_2t}\Big)\le\frac{\delta_3}{4} \Big\} 
	\end{equation} 
    with $C_1$, $C_2$ being the deterministic constants in \eqref{5.31} and \eqref{N-C-II} below, respectively.

     \medskip 
     
     We aim to show that there exists an appropriate vector 
	$\boldsymbol{a}_n^- \in B_{\mathbb{R}^K}(\phi^{\frac{1}{2}+\frac{1}{4d}}(\widetilde{\delta}n))$
	such that
	$T_0(\boldsymbol{a}_n^-)$ is 
    less than the universal time $T_0$, 
    i.e., $T_0(\boldsymbol{a}_n^-) \leq T_0$.  
    This is important in the next section 
    to pass to the limit 
    of the approximating solutions 
    to construct the desired 
    stochastic multi-solitons.

	\begin{proposition} [Uniform backward time]  \label{prop5.4}
	Let $\widetilde{\delta}$, $v_*$, $\tau^*$ be as in Proposition \ref{prop5.2}, and $T_0(\boldsymbol{a}_n^-)$, $T_0$ defined
		as above. 
		Then, 
        for $\mathbb{P}$-a.e. $\omega\in \Omega$  there exists $N_0(\omega)\ge1$ 
		such that for any $n\ge N_0(\omega)$,
		there exists $\boldsymbol{a}_n^-(\omega)\in B_{\mathbb{R}^K}(\phi^{\frac{1}{2}+\frac{1}{4d}}(\widetilde{\delta}n))$
		such that 
        $T_0(\boldsymbol{a}_n^-(\omega))
        \le T_0(\omega)$. 
	\end{proposition}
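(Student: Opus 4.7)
The plan is to argue by contradiction via a topological retraction argument based on Brouwer's no-retraction theorem, following the strategy of \cite{CMM11}, adapted to the present stochastic setting where the radius of the ball is dictated by the spatial decay of the noise. Fix $\omega$ outside the null set where $\tau^*, \sigma_1, \sigma_2$ are finite, and take $N_0 = [T_0]+1$. Suppose toward contradiction that for some $n \ge N_0$ and \emph{every} $\boldsymbol{a}_n^-\in B_{\mathbb{R}^K}(\phi^{\frac{1}{2}+\frac{1}{4d}}(\widetilde{\delta}n))$, one has $T_0(\boldsymbol{a}_n^-) > T_0$.

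First I would identify the exit mechanism. By the definition \eqref{T0an-def} and continuity in $t$, at $t = T_0(\boldsymbol{a}_n^-)$ at least one of the estimates \eqref{4p}--\eqref{5.4} must be saturated. However, Proposition \ref{prop5.2} applied with $t^* = T_0(\boldsymbol{a}_n^-)$ strictly improves the bounds on $\|u_n - R\|_{H^1}$, $\|\varepsilon_n\|_{H^1}$, $|\mathbf{a}_n^+|$, and on $(\alpha_n,\theta_n)$ by the factors $1/4$ and $1/2$; thus none of these can be the saturated condition. The only surviving possibility is that $|\mathbf{a}_n^-(T_0(\boldsymbol{a}_n^-))| = \phi^{\frac{1}{2}+\frac{1}{4d}}(\widetilde{\delta}T_0(\boldsymbol{a}_n^-))$.

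The crucial step is transversality. Using Proposition \ref{prop3.6} we compute
\begin{equation*}
\frac{d}{dt}|\mathbf{a}_n^-(t)|^2
= -2\sum_{k=1}^{K} e_0(w_k)^{-2}|a_k^-(t)|^2
+ \mathcal{O}\!\left(|\mathbf{a}_n^-(t)|\big(\|\varepsilon_n\|_{H^1}^{p\wedge 2}+B_*(t)\phi(\delta_1 t)+e^{-\delta_2 t}\big)\right).
\end{equation*}
At the exit time, the leading term is bounded above by $-4\delta_3\phi^{1+\frac{1}{2d}}(\widetilde{\delta}t)$ by the choice of $\delta_3$ in \eqref{def-delta3}. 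Using the bootstrap bound $\|\varepsilon_n\|_{H^1}\le\phi^{1/2}(\widetilde{\delta}t)$, the inequality $p\wedge 2 > 1+\frac{1}{2d}$ from \eqref{p2>}, together with $B_*(t)=o(1)$ in Case (I) or $B_*(t)=\mathcal{O}(t^{-1})$ in Case (II), the error term is bounded by $C_i \phi^{(p\wedge 2)/2 + 1/2+1/(4d)}(\widetilde{\delta}t)+Ce^{-\delta_2 t}\phi^{1/2+1/(4d)}(\widetilde\delta t)$, which is a small multiple of $\delta_3\phi^{1+\frac{1}{2d}}(\widetilde{\delta}t)$ for $t \ge T_0$ thanks to the definitions \eqref{def-S5} and \eqref{def-S6} of $\tau_6,\tau_7$ included in $T_0$. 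On the other hand, $\frac{d}{dt}\phi^{1+\frac{1}{2d}}(\widetilde{\delta}t) \ge -(1+\frac{1}{2d})\widetilde{\delta}\phi^{1+\frac{1}{2d}}(\widetilde{\delta}t)$, and $(1+\frac{1}{2d})\widetilde{\delta} < 2\delta_3$ by \eqref{3.51}. Combining these yields
\begin{equation*}
\frac{d}{dt}\Big(|\mathbf{a}_n^-(t)|^2 - \phi^{1+\frac{1}{2d}}(\widetilde{\delta}t)\Big)\bigg|_{t=T_0(\boldsymbol{a}_n^-)} \le -\delta_3\phi^{1+\frac{1}{2d}}(\widetilde{\delta}T_0(\boldsymbol{a}_n^-)) < 0,
\end{equation*}
so $|\mathbf{a}_n^-(t)|$ crosses the threshold transversely. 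By continuous dependence of $\mathbf{a}_n^-$ on $\boldsymbol{a}_n^-$ and the implicit function theorem, $\boldsymbol{a}_n^- \mapsto T_0(\boldsymbol{a}_n^-)$ is continuous.

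Now I set up the retraction. Writing $r_n(t) := \phi^{\frac{1}{2}+\frac{1}{4d}}(\widetilde{\delta}t)$, define
\begin{equation*}
\Psi : \overline{B_{\mathbb{R}^K}(1)} \longrightarrow S_{\mathbb{R}^K}(1),
\qquad
\Psi(\xi) := \frac{\mathbf{a}_n^-\big(T_0(r_n(n)\xi);\, r_n(n)\xi\big)}{r_n\!\left(T_0(r_n(n)\xi)\right)},
\end{equation*}
where $\mathbf{a}_n^-(t;\boldsymbol{a}_n^-)$ denotes the value at time $t$ associated with final datum $\boldsymbol{a}_n^-$ at $n$. The map is well-defined into the sphere by the previous step, and continuous by the continuity of $T_0(\cdot)$ and of the solution map. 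On the boundary $|\xi|=1$ we have $|r_n(n)\xi| = r_n(n)$, so $|\mathbf{a}_n^-(n)| = r_n(n)$ and hence $T_0(r_n(n)\xi) = n$ and $\Psi(\xi) = \xi$. Thus $\Psi$ is a continuous retraction of the closed ball $\overline{B_{\mathbb{R}^K}(1)}$ onto its boundary $S_{\mathbb{R}^K}(1)$, contradicting the Brouwer no-retraction theorem. Hence some $\boldsymbol{a}_n^-(\omega) \in B_{\mathbb{R}^K}(\phi^{\frac{1}{2}+\frac{1}{4d}}(\widetilde{\delta}n))$ must satisfy $T_0(\boldsymbol{a}_n^-(\omega)) \le T_0(\omega)$.

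I expect the main obstacle to be the quantitative transversality estimate: one needs $p\wedge 2 > 1 + \frac{1}{2d}$ (the reason for the exponent $\frac{1}{2}+\frac{1}{4d}$ in the a priori bound on $\mathbf{a}_n^-$) and sufficiently large $\nu_*$ to make every noise-induced error strictly subdominant to the spectral rate $2\delta_3$. All five deterministic choices $\tau_6$, $\tau_7$, $\nu_0$, and the definition of $\widetilde\delta$ are tailored precisely so that this quantitative inequality survives in both Case (I) and Case (II); this is what makes the topological argument close.
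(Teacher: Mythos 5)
Your proposal is correct and follows essentially the same route as the paper: a contradiction argument in which Proposition \ref{prop5.2} rules out exit through any quantity other than $\mathbf{a}_n^-$, the transversality estimate $\frac{d\mathcal{N}}{dt}<0$ at the crossing (driven by $-e_0(w_k)^{-2}$ and the subdominance of the errors via $p\wedge2>1+\frac{1}{2d}$ and $\tau_6,\tau_7$) gives continuity of $\boldsymbol{a}_n^-\mapsto T_0(\boldsymbol{a}_n^-)$ and the boundary identity $T_0=n$, and Brouwer's theorem (you use the no-retraction form, the paper the equivalent fixed-point form applied to $-\Lambda$) yields the contradiction. The only caveat is cosmetic: in Case (II) the comparison rate for $\frac{d}{dt}\phi^{1+\frac{1}{2d}}(\widetilde{\delta}t)$ is $(1+\frac{1}{2d})\nu_* t^{-1}$ rather than $(1+\frac{1}{2d})\widetilde{\delta}$, which is exactly why $\tau_7$ requires $t\ge \frac{3\nu_*}{2\delta_3}$ — a point you implicitly rely on but state only for Case (I).
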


	\begin{proof}
		We shall prove this by contradiction.
		Suppose that 
        there exists a measurable set $\Omega'\subseteq \Omega$ 
        with positive probability, 
        such that for every $\omega\in \Omega'$ 
        and 
        for any $N_0$ large enough, there exists $n_0(\omega) \ge N_0$, such that
		$T_0(\boldsymbol{a}_{n_0}^-)(\omega)
        >T_0(\omega)$
		for all $\boldsymbol{a}_{n_0}^-\in B_{\mathbb{R}^K}(\phi^{\frac{1}{2}+\frac{1}{4d}}(\widetilde{\delta}{n_0}(\omega)))$. 
        Below we fix $\omega\in \Omega'$ 
         and take $N_0(\omega)>T_0(\omega)$. We omit the dependence of $\omega$ 
        to simplify the notations. 
		
		In view of Proposition \ref{prop3.5},
		for $N_0$ possibly larger such that $\phi^{\frac{1}{2}+\frac{1}{4d}}(\widetilde{\delta}{N_0})\le r$, 
		we infer that for any $\boldsymbol{a}_{n_0}^-\in B_{\mathbb{R}^K}\big(\phi^{\frac{1}{2}+\frac{1}{4d}}(\widetilde{\delta}{n_0})\big)$,
		there exists a unique vector $\boldsymbol{b}_{n_0}\in\mathbb{R}^{2K}$ such that
		\begin{equation}\label{b-a-n0^-}
			\mathbf{a}_{n_0}^+({n_0}) = {\bf 0},\quad
			\mathbf{a}_{n_0}^-({n_0}) = \boldsymbol{a}_{n_0}^-,\quad {\rm and}\quad  |\boldsymbol{b}_{n_0}|\le C\phi^{\frac{1}{2}+\frac{1}{4d}}(\widetilde{\delta}{n_0}).
		\end{equation}

		By the definition of $T_0(\boldsymbol{a}_{n_0}^-)$,
		$u_{n_0}$ admits the geometrical decomposition \eqref{3.3} 
		and estimates \eqref{4p}-\eqref{5.4} on $(T_0(\boldsymbol{a}_{n_0}^-), {n_0}]$.
		 It is still valid on the closed interval $[T_0(\boldsymbol{a}_{n_0}^-), {n_0}]$, 
		due to the continuity of solutions 
		and modulation parameters.
		
		 Moreover, by Proposition \ref{prop5.2} and the definition of $T_0$ in \eqref{def-T0},
		one has the improved estimates (\ref{5.5})-(\ref{5.7}) on $[T_0(\boldsymbol{a}_{n_0}^-), {n_0}]$.
		In particular,
		\begin{align*}
			&\Vert u_{n_0}(T_0(\boldsymbol{a}_{n_0}^-))-R(T_0(\boldsymbol{a}_{n_0}^-))\Vert_{H^1}\le\frac{\delta_*}{4},\quad \Vert \varepsilon_{n_0}(T_0(\boldsymbol{a}_{n_0}^-))\Vert_{H^1}\le\frac{1}{2}\phi^{\frac{1}{2}}(\widetilde{\delta}T_0(\boldsymbol{a}_{n_0}^-)),\\
			&\sum\limits_{k=1}^K(|\alpha_{n_0,k}(T_0(\boldsymbol{a}_{n_0}^-))-\alpha_k^0|+|\theta_{n_0,k}(T_0(\boldsymbol{a}_{n_0}^-))-\theta_k^0|)\le\frac{1}{2}T_0(\boldsymbol{a}_{n_0}^-)\phi^{\frac{1}{2}}(\widetilde{\delta}T_0(\boldsymbol{a}_{n_0}^-)),\\
			&|\mathbf{a}_{n_0}^+(T_0(\boldsymbol{a}_{n_0}^-))|\le \frac{1}{2}\phi^{\frac{1}{2}}(\widetilde{\delta}T_0(\boldsymbol{a}_{n_0}^-)).
		\end{align*}
		
		Note that
		\begin{equation}
			|\mathbf{a}_{n_0}^-(T_0(\boldsymbol{a}_{n_0}^-))|=\phi^{\frac{1}{2}+\frac{1}{4d}}(\widetilde{\delta}T_0(\boldsymbol{a}_{n_0}^-)).
			\label{5.29}
		\end{equation}
		That is,
		$\mathbf{a}_{n_0}^-(T_0(\boldsymbol{a}_{n_0}^-))$ is in 
		the sphere
		$S_{\mathbb{R}^K}\big(\phi^{\frac{1}{2}+\frac{1}{4d}}(\widetilde{\delta}T_0
		(\boldsymbol{a}_{n_0}^-))\big)$. 
		In fact, if $|\mathbf{a}_{n_0}^-(T_0(\boldsymbol{a}_{n_0}^-))|<\phi^{\frac{1}{2}+\frac{1}{4d}}(\widetilde{\delta}T_0(\boldsymbol{a}_{n_0}^-))$, 
        then by the above estimates and the continuity of modulation parameters and remainder,
		one can find a small time $\eta>0$ such that estimates \eqref{4p}-\eqref{5.4} hold on $[T_0(\boldsymbol{a}_{n_0}^-)-\eta,{n_0}]$,
		which contradicts the definition of $T_0(\boldsymbol{a}_{n_0}^-)$.

		\medskip
		Now,
		we define the  map by
		\begin{align*}
			\Lambda:\quad B_{\mathbb{R}^K}\big(\phi^{\frac{1}{2}+\frac{1}{4d}}(\widetilde{\delta}{n_0})\big)
			&\to S_{\mathbb{R}^K}\big(\phi^{\frac{1}{2}+\frac{1}{4d}}(\widetilde{\delta}{n_0})\big),\\
			\boldsymbol{a}^-_{n_0}
			&\mapsto \phi^{\frac{1}{2}+\frac{1}{4d}}(\widetilde{\delta}{n_0})\phi^{-(\frac{1}{2}+\frac{1}{4d})}(\widetilde{\delta}T_0(\boldsymbol{a}_{n_0}^-))\mathbf{a}_{n_0}^-(T_0(\boldsymbol{a}_{n_0}^-)).
		\end{align*}
		It follows from \eqref{5.29} that
		$\Lambda$ is well defined.
		
		Below we show that
		$\Lambda$ is continuous,
		and it is  the identity map
		when restricted to $S_{\mathbb{R}^K}(\phi^{\frac{1}{2}+\frac{1}{4d}}(\widetilde{\delta}{n_0}))$.
		Assuming these to hold, 
        we then infer that the continuous map $\widetilde{\Lambda}:=-\Lambda$ from $B_{\mathbb{R}^K}\big(\phi^{\frac{1}{2}+\frac{1}{4d}}(\widetilde{\delta}{n_0})\big)$ to $S_{\mathbb{R}^K}\big(\phi^{\frac{1}{2}+\frac{1}{4d}}(\widetilde{\delta}{n_0})\big)$ has no fixed point, 
        which contradicts the Brouwer fixed point theorem. 
        In fact, 
        since the image of $\widetilde{\Lambda}$ 
        is in $S_{\mathbb{R}^K}\big(\phi^{\frac{1}{2}+\frac{1}{4d}}(\widetilde{\delta}{n_0})\big)$, 
        it is clear that  
        \begin{equation*} 
            \widetilde{\Lambda}(\boldsymbol{a}^-_{n_0})\neq\boldsymbol{a}^-_{n_0},\ \forall \boldsymbol{a}^-_{n_0}\in \mathring B_{\mathbb{R}^K}\big(\phi^{\frac{1}{2}+\frac{1}{4d}}(\widetilde{\delta}{n_0})\big).
        \end{equation*}
        Moreover,
        since $\Lambda$ is the identity map
		when restricted to $S_{\mathbb{R}^K}(\phi^{\frac{1}{2}+\frac{1}{4d}}(\widetilde{\delta}{n_0}))$, we have
        \begin{equation*} 
            \widetilde{\Lambda}(\boldsymbol{a}^-_{n_0})=-\Lambda(\boldsymbol{a}^-_{n_0})=-\boldsymbol{a}^-_{n_0}\neq\boldsymbol{a}^-_{n_0},\ \forall \boldsymbol{a}^-_{n_0}\in S_{\mathbb{R}^K}\big(\phi^{\frac{1}{2}+\frac{1}{4d}}(\widetilde{\delta}{n_0})\big).
        \end{equation*}
        The above two facts  together then yield 
        that   $\widetilde{\Lambda}$ has no fixed point, 
        as claimed.

\medskip 
        Below let us start with the proof of the continuity of the map $\Lambda$.

		{\bf $(i)$. Continuity of 
			$\Lambda$}: It suffices to prove that 	
		the map $\boldsymbol{a}_{n_0}^-\mapsto T_0(\boldsymbol{a}_{n_0}^-)$ is continuous. For this purpose,
		we fix  $\boldsymbol{a}_{n_0}^-\in{B}_{\mathbb{R}^K}(\phi^{(\frac{1}{2}+\frac{1}{4d})}(\widetilde{\delta}{n_0}))$ and let $\widehat{T}(\boldsymbol{a}_{n_0}^-)\in(T_0,T_0(\boldsymbol{a}_{n_0}^-))$ be close enough to $T_0(\boldsymbol{a}_{n_0}^-)$ such that estimates \eqref{5.8} and \eqref{5.9} with $n$ replaced by $n_0$ hold  on $[\widehat{T}(\boldsymbol{a}_{n_0}^-),{n_0}]$.
		Let
		\begin{equation}
			\mathcal{N}(t,\boldsymbol{a}_{n_0}^-):=|\phi^{-(\frac{1}{2}+\frac{1}{4d})}(\widetilde{\delta}t)\mathbf{a}_{n_0}^-(t)|^2,
			\quad
			t\in[\widehat{T}(\boldsymbol{a}_{n_0}^-),{n_0}].
			\label{5.30}
		\end{equation}
		
		Below we mainly consider the polynomial decay rate in the stochastic {\rm Case (II)},
		as {\rm Case (I)} is easier and can be treated in an analogous manner.

		\medskip
		{\bf Case (II):} 
		Recall that $\phi(x)=|x|^{-\nu_*}$ 
		is the spatial decay function of noise given by \eqref{1.17} in {\rm Case (II)}. 
		By straightforward computations, \eqref{3.61} and \eqref{def-delta3}, we have 
		for any  $t\in[\widehat{T}(\boldsymbol{a}_{n_0}^-),{n_0}]$,
		\begin{align}\label{N-C-II}
			\frac{d\mathcal{N}}{dt}(t,\boldsymbol{a}_{n_0}^-)=&\frac{d}{dt}\bigg((\widetilde{\delta}t)^{(1+\frac{1}{2d})\nu_*}|\mathbf{a}_{n_0}^-(t)|^2\bigg)\nonumber\\
			=&\big(1+(2d)^{-1}\big)\widetilde{\delta}\nu_*\big(\widetilde{\delta}t\big)^{(1+\frac{1}{2d})\nu_*-1}|\mathbf{a}_{n_0}^-(t)|^2+2\big(\widetilde{\delta}t\big)^{(1+\frac{1}{2d})\nu_*}\sum\limits_{k=1}^K\Big(a_{n_0,k}^-(t)\frac{d}{dt}a_{n_0,k}^-(t)\Big)\nonumber\\
			\le&\big(1+(2d)^{-1}\big)\nu_*t^{-1}\mathcal{N}(t,\boldsymbol{a}_{n_0}^-)+2(\widetilde{\delta}t)^{(1+\frac{1}{2d})\nu_*}\sum\limits_{k=1}^{K}\Big(-e_0(w_k)^{-2}\big(a_{n_0,k}^-(t)\big)^2\nonumber\\
			&\quad +C\big|a_{n_0,k}^-(t)\big|\big((\widetilde{\delta}t)^{-\frac{p\wedge2}{2}\nu_*}+t^{-1}(\widetilde{\delta}t)^{-\nu_*}+e^{-\delta_2t}\big)\Big)\nonumber\\
			\le&\Big(\frac{3}{2}\nu_*t^{-1}-2\delta_3\Big)\mathcal{N}(t,\boldsymbol{a}_{n_0}^-)+C_2\big((\widetilde{\delta}t)^{(\frac{1}{2}+\frac{1}{4d}-\frac{p\wedge2}{2})\nu_*}+(\widetilde{\delta}t)^{(\frac{1}{2}+\frac{1}{4d})\nu_*}e^{-\delta_2t}\big)\sqrt{\mathcal{N}(t,\boldsymbol{a}_{n_0}^-)},
		\end{align}
		which along with \eqref{def-T0} and \eqref{def-S6} yields that for any $t\in[\widehat{T}(\boldsymbol{a}_{n_0}^-),{n_0}]$,
		\begin{equation} \label{dN-N-esti}
			\frac{d\mathcal{N}}{dt}(t,\boldsymbol{a}_{n_0}^-)\le-\delta_3\mathcal{N}(t,\boldsymbol{a}_{n_0}^-)+\frac{\delta_3}{4}\sqrt{\mathcal{N}(t,\boldsymbol{a}_{n_0}^-)}.
		\end{equation}
		
		\medskip 
		We next claim that for $\eta>0$ small enough,
		there exists $\delta>0$ such that
		\begin{equation}
			\mathcal{N}(t,\boldsymbol{a}_{n_0}^-)<1-\delta,\quad \forall t\in[T_0(\boldsymbol{a}_{n_0}^-)+\eta,{n_0}],
			\label{5.332}
		\end{equation}
		and
		\begin{equation}
			\mathcal{N}(t,\boldsymbol{a}_{n_0}^-)>1+\delta,\quad \forall t\in[\widehat{T}(\boldsymbol{a}_{n_0}^-),T_0(\boldsymbol{a}_{n_0}^-)-\eta].
			\label{5.331}
		\end{equation}
		In the case where $T_0(\boldsymbol{a}_{n_0}^-)+\eta>{n_0}$  
		we only consider \eqref{5.331}, which may happen if 
		$\boldsymbol{a}_{n_0}^-\in S_{\mathbb{R}^K} ((\widetilde{\delta}{n_0})^{-(\frac{1}{2}+\frac{1}{4d})\nu_*}
		)$, see the next step $(ii)$ below.  
		
		Below 
		we prove \eqref{5.332}, and the proof of \eqref{5.331} is similar. 
		To this end, 
		we argue by contradiction and assume that there exists $\eta_*>0$ such that for any $m\geq 1$,
		there exists $t_m\in[T_0(\boldsymbol{a}_{n_0}^-)+\eta_*,{n_0}]$,
		such that
		\begin{equation}\label{Nt>1-}
			\mathcal{N}(t_m,\boldsymbol{a}_{n_0}^-)\ge 1-\frac 1m.
		\end{equation}
		By compactness,
		there exists a subsequence (still denoted by $\{m\}$) such that $t_m\to t_0$ as $m\to+\infty$  and $t_0\in[T_0(\boldsymbol{a}_{n_0}^-)+\eta_*,{n_0}]$. 
        Then passing to the limit 
        $m\to +\infty$ in \eqref{Nt>1-}  we get
		$
		    \mathcal{N}(t_0,\boldsymbol{a}_{n_0}^-)\ge1.
		$
		But 
        by the definition of $T_0(\boldsymbol{a}_{n_0}^-)$ in \eqref{T0an-def}, 
        $\mathcal{N}(t,\boldsymbol{a}_{n_0}^-)\le1$  
        for any $t\ge T_0(\boldsymbol{a}_{n_0}^-)$. 
		It thus follows that $\mathcal{N}(t_0,\boldsymbol{a}_{n_0}^-)=1$. 
		Then, taking $t=t_0$ in \eqref{dN-N-esti} we obtain
		\begin{equation*}
			\frac{d\mathcal{N}}{dt}(t_0,\boldsymbol{a}_{n_0}^-)\le-\frac{3}{4}\delta_3<0.
		\end{equation*}
		This yields that  
        $\mathcal{N}(\widetilde{t}_0,\boldsymbol{a}_{n_0}^-)>1$ 
        for some $\widetilde{t}_0\in [T_0(\boldsymbol{a}_{n_0}^-),t_0]$,  which 
        however contradicts 
        the fact that 
        $\mathcal{N}(\widetilde{t}_0,\boldsymbol{a}_{n_0}^-)\leq 1$, thereby proving \eqref{5.332}, as claimed.

		\medskip 
		Now, since by \eqref{5.30}, 
		$\mathcal{N}(t,\boldsymbol{a}_{n_0}^-)$ is continuous in $\mathbf{a}_{n_0}^-(t)$,
		and for all $t\in[\widehat{T}(\boldsymbol{a}_{n_0}^-),{n_0}]$, $\mathbf{a}_{n_0}^-(t)$ is continuous in $\boldsymbol{a}_{n_0}^-$ due to Proposition \ref{prop3.5} and the continuity of the flow of \eqref{5.1}.
		It follows that $\mathcal{N}(t,\boldsymbol{a}_{n_0}^-)$ is continuous in $\boldsymbol{a}_{n_0}^-$.
		Moreover,
		using the contradiction assumption that $T_0(\boldsymbol{a}_{n_0}^-)
        >T_0$, Proposition \ref{prop3.5} and the continuity of the flow of \eqref{5.1} again we get that there exists $\zeta(>0)$ 
        small enough, 
        such that for any $\widetilde{\boldsymbol{a}}_{n_0}^-\in B_{\mathbb{R}^K}(\boldsymbol{a}_{n_0}^-,\zeta)$,
		one has $\widehat{T}(\widetilde{\boldsymbol{a}}_{n_0}^-)\le T_0(\boldsymbol{a}_{n_0}^-)$. Thus, $\mathcal{N}(T_0(\boldsymbol{a}_{n_0}^-),\widetilde{\boldsymbol{a}}_{n_0}^-)$ is well-defined.
		Then, by the continuity of  
        the map 
        $\widetilde{\boldsymbol{a}}_{n_0}^- 
        \mapsto \mathcal{N}(T_0(\boldsymbol{a}_{n_0}^-),\widetilde{\boldsymbol{a}}_{n_0}^-)$, 
		there exists $\zeta=\zeta(\delta,T_0(\boldsymbol{a}_{n_0}^-))$ possibly smaller with $\delta$ as in \eqref{5.332} and \eqref{5.331}, 
		such that for any $\widetilde{\boldsymbol{a}}_{n_0}^-\in B_{\mathbb{R}^K}(\boldsymbol{a}_{n_0}^-,\zeta)$, one has 
        $|\mathcal{N}(T_0(\boldsymbol{a}_{n_0}^-),\widetilde{\boldsymbol{a}}_{n_0}^-)-\mathcal{N}(T_0(\boldsymbol{a}_{n_0}^-),\boldsymbol{a}_{n_0}^-)|=|\mathcal{N}(T_0(\boldsymbol{a}_{n_0}^-),\widetilde{\boldsymbol{a}}_{n_0}^-) - 1|\le\frac{\delta}{2}$.
		Taking into account \eqref{5.332} and \eqref{5.331} 
		with $\boldsymbol{a}_{n_0}^-$ replaced by $\widetilde{\boldsymbol{a}}_{n_0}^-$, we thus derive that $|T_0(\widetilde{\boldsymbol{a}}_{n_0}^-)-T_0(\boldsymbol{a}_{n_0}^-)|<\eta$ 
        for any $\widetilde{\boldsymbol{a}}_{n_0}^-\in B_{\mathbb{R}^K}(\boldsymbol{a}_{n_0}^-,\zeta)$. 
		This gives the continuity of the map
		$\boldsymbol{a}_{n_0}^-\mapsto T_0(\boldsymbol{a}_{n_0}^-)$ in {\rm Case (II)}.

		\medskip
		{\bf Case (I)}: 
		In this case, 
		we have from  \eqref{1.17}  that 
		$\phi(x)=e^{-|x|}$. 
		For any {$t\in[\widehat{T}(\boldsymbol{a}_{n_0}^-),{n_0}]$},
		one can replace \eqref{N-C-II} by the following estimate
		\begin{align}
			\frac{d\mathcal{N}}{dt}(t,\boldsymbol{a}_{n_0}^-)=&\frac{d}{dt}\bigg(e^{(1+\frac{1}{2d})\widetilde{\delta}t}|\mathbf{a}_{n_0}^-(t)|^2\bigg)\nonumber\\
			=&\Big(1+\frac{1}{2d}\Big)\widetilde{\delta}e^{(1+\frac{1}{2d})\widetilde{\delta}t}|\mathbf{a}_{n_0}^-(t)|^2+2e^{(1+\frac{1}{2d})\widetilde{\delta}t}\sum\limits_{k=1}^{K}\Big(a_{n_0,k}^-(t)\frac{d}{dt}a_{n_0,k}^-(t)\Big)\nonumber\\
			\le&\Big(1+\frac{1}{2d}\Big)\widetilde{\delta}\mathcal{N}(t,\boldsymbol{a}_{n_0}^-)+2e^{(1+\frac{1}{2d})\widetilde{\delta}t}\sum\limits_{k=1}^{K}\Big(-e_0\left(w_k\right)^{-2}\big(a_{n_0,k}^-(t)\big)^2\nonumber\\
			&\qquad +C\big|a_{n_0,k}^-(t)\big|\big(e^{-\frac{p\wedge2}{2}\widetilde{\delta}t}+e^{-\delta_1t}+e^{-\delta_2t}\big)\Big)\nonumber\\
			\le&-\widetilde{\delta}\mathcal{N}(t,\boldsymbol{a}_{n_0}^-)+C_1e^{(\frac{1}{2}+\frac{1}{4d}-\frac{p\wedge2}{2})\widetilde{\delta}t}\sqrt{\mathcal{N}(t,\boldsymbol{a}_{n_0}^-)}\nonumber\\
			\le&-\widetilde{\delta}\mathcal{N}(t,\boldsymbol{a}_{n_0}^-)+\frac{\widetilde{\delta}}{4}
			\sqrt{\mathcal{N}(t,\boldsymbol{a}_{n_0}^-)}.\label{5.31}
		\end{align}  
		Then, similar arguments as in {\rm Case (II)}
		lead to the continuity of  the map
		$\boldsymbol{a}_{n_0}^-\mapsto T_0(\boldsymbol{a}_{n_0}^-)$.

		\medskip
		
		{\bf $(ii)$. Identity of $\Lambda$ 
			when restricted to sphere}:  It remains to prove that
		$\Lambda$ is the identity map
		when restricted to $S_{\mathbb{R}^K}(\phi^{\frac{1}{2}+\frac{1}{4d}}(\widetilde{\delta}{n_0}))$.
		
		To this end,
		for any $\boldsymbol{a}_{n_0}^-\in S_{\mathbb{R}^K}(\phi^{\frac{1}{2}+\frac{1}{4d}}(\widetilde{\delta}{n_0}))$, 
		using \eqref{b-a-n0^-} we have
		\begin{equation} \label{N-1}
			\mathcal{N}({n_0},\boldsymbol{a}_{n_0}^-)
			=| \phi^{-(\frac{1}{2}+\frac{1}{4d})}(\widetilde{\delta}{n_0})\mathbf{a}_{n_0}^-({n_0})|^2
			=| \phi^{-(\frac{1}{2}+\frac{1}{4d})}(\widetilde{\delta}{n_0})
			\boldsymbol{a}_{n_0}^-|^2
			=1.
		\end{equation}
	  Moreover, 
		 letting $t=n_0$ in \eqref{5.31} and \eqref{dN-N-esti} we obtain  
		\begin{equation} \label{dN-0}
			\frac{d\mathcal{N}}{dt}({n_0},\boldsymbol{a}_{n_0}^-)<0
		\end{equation} 
        in both Case (I) and Case (II). 
		Suppose that $T_0(\boldsymbol{a}_{n_0}^-)<{n_0}$, then 
		\eqref{N-1} and \eqref{dN-0} 
		imply that there exists $t\in(T_0(\boldsymbol{a}_{n_0}^-),{n_0})$ such that $\mathcal{N}(t,\boldsymbol{a}_{n_0}^-)>1$. 
		But by the definition of $T_0(\boldsymbol{a}_{n_0}^-)$ 
		in \eqref{T0an-def}, 
		$\mathcal{N}(t,\boldsymbol{a}_{n_0}^-)\leq 1$ 
		for any $t\in[T_0(\boldsymbol{a}_{n_0}^-),{n_0}]$. 
		This leads to a contradiction. 
		Thus, we get $T_0(\boldsymbol{a}_{n_0}^-)={n_0}$.
		
		Consequently, by the definition of $\Lambda$ and \eqref{b-a-n0^-}, 
$\Lambda(\boldsymbol{a}_{n_0}^-)=\boldsymbol{a}_{n_0}^-$
		for all $\boldsymbol{a}_{n_0}^-\in S_{\mathbb{R}^K}(\phi^{\frac{1}{2}+\frac{1}{4d}}(\widetilde{\delta}{n_0}))$,
		which shows that
		the map $\Lambda$ restricted to $S_{\mathbb{R}^K}(\phi^{\frac{1}{2}+\frac{1}{4d}}(\widetilde{\delta}{n_0}))$ is the identity map.

		Therefore,
		the proof of Proposition \ref{prop5.4}
		is complete.
	\end{proof}

	\subsection{Proof of uniform estimate}
	Now, we are in position to prove the uniform estimate in Theorem \ref{th5.1}.
	
	\begin{proof}[Proof of Theorem \ref{th5.1}]
		By Propositions \ref{prop3.5} and \ref{prop5.4},  
		for $\mathbb{P}$-a.e. $\omega\in \Omega$ and for $n=n(\omega)$ large enough,
		there exist $\boldsymbol{a}_n^-(\omega)\in B_{\mathbb{R}^K}(\phi^{\frac{1}{2}+\frac{1}{4d}}(\widetilde{\delta}n))$ and
		a unique $\boldsymbol{b}_n(\omega)
        =\boldsymbol{b}_n(\boldsymbol{a}_n^-(\omega))$,
		such that 
        $T_0(\boldsymbol{a}_n^-(\omega))\le T_0(\omega)$, 
       	where $T_0(\omega)$ is independent of $n$. 
        Thus, by the definition of $T_0(\boldsymbol{a}_n^-(\omega))$ in \eqref{T0an-def}, $u_n(\omega)$ admits the geometrical decomposition \eqref{3.3} and estimates \eqref{5.3}, \eqref{5.4} on $[T_0(\omega), n]$.

		Regarding the error estimate \eqref{5.2}, we see that in Case (I), for any $t\in[T_0(\omega), n]$, by \eqref{5.3} and \eqref{5.4},
		\begin{align*}
			\Vert u_n(t,\omega)
            -R(t,\omega)\Vert_{H^1}
            \le&\Vert R(t,\omega)-\widetilde{R}_n(t,\omega)\Vert_{H^1}+\Vert\varepsilon_n(t,\omega)\Vert_{H^1}\nonumber\\
			\le&C\sum\limits_{k=1}^K(|\alpha_{n,k}(t,\omega)-\alpha_k^0|
            +|\theta_{n,k}(t,\omega)-\theta_k^0|)
            +\Vert\varepsilon_n(t,\omega)\Vert_{H^1}\nonumber\\
			\le&Cte^{-\frac{1}{2}\widetilde{\delta}t}.
		\end{align*}
		Moreover, in Case (II), for any $t\in[T_0(\omega), n]$,
		\begin{align*}
			\Vert u_n(t,\omega)-R(t,\omega)\Vert_{H^1}
			\le C\sum\limits_{k=1}^K(|\alpha_{n,k}(t,\omega)-\alpha_k^0|
            +|\theta_{n,k}(t,\omega)-\theta_k^0|)
            +\Vert\varepsilon_n(t,\omega)\Vert_{H^1}
			\le C t(\widetilde{\delta}t)^{-\frac{\nu_*}{2}}.
		\end{align*}
		Thus, estimate \eqref{5.2} is verified in both Case(I) and Case (II).
		The proof of Theorem \ref{th5.1} is complete.
	\end{proof}

	\section{Proof of main results}
	\label{Sec-Proof-Main}
	
	We are now ready to prove the main results in
	Theorems  \ref{th1.3} and \ref{th1.6}.
	
	\begin{proof}[Proof of Theorem \ref{th1.6}]
		Let us fix $\omega\in \Omega$ as in Theorem \ref{th5.1} 
        and omit it in the following arguments 
        to ease notations. 
        Let $\{u_n\}$ be the approximating solutions to \eqref{5.1}. 
		By virtue of Theorem \ref{th5.1} and the expressions of 
		$R(t)$ and $R_k(t)$ in \eqref{sumRk} and \eqref{1.11},  
        respectively, 
		we derive that  
		\begin{equation}\label{bound-un}
			\|u_{n}(t)\|_{H^1}\le
			\|u_n(t)-R(t)\|_{H^1}+\|R(t)\|_{H^1}\le Ct\phi^{\frac{1}{2}}(\widetilde{\delta}t)+\sum\limits_{k=1}^K\|R_k(t)\|_{H^1}
			\le C,\quad \forall t\in[T_0,n],
		\end{equation} 
		where 
		$C$ depends on $w_k$ and $\widetilde{\delta}$ 
		and is independent of $n$ and $t$. 
		It follows that $\{u_{n}(t)\}$ is uniformly bounded in $H^1(\mathbb{R}^d)$.
		In particular, letting $t=T_0$ 
        we obtain a subsequence (still denoted by $\{u_n(T_0)\}$) such that
		\begin{equation}
			u_n(T_0)\rightharpoonup u_0~~{\rm in}~~H^1(\mathbb{R}^d), 
			\quad {\rm as}~~n\to\infty,
			\label{wH}
		\end{equation}
		for some $u_0\in H^1(\mathbb{R}^d)$.

		\medskip 
		{\bf Claim:} 
		One has the strong convergence of 
		$u_n(T_0)$ in $L^2(\mathbb{R}^d)$, i.e.,
		\begin{equation}
			u_n(T_0)\to u_0~~{\rm in}~~L^2(\mathbb{R}^d),
			\quad {\rm as}~~n\to\infty.
			\label{sL}
		\end{equation}
		
		To this end,
		for any $\eta>0$,
		since $u_0\in H^1(\mathbb{R}^d)$,
		we can take $A_0$ large enough such that
		\begin{equation}
			\int_{|x|\ge A_0}|u_0(x)|^2dx\le\frac{\eta}{8}.
			\label{small1}
		\end{equation}
		
		By Proposition \ref{prop5.4}, estimates \eqref{4p}-\eqref{5.4} hold on the time interval $[T_0,n]$.
		For $n$ large enough, we fix $\widetilde{T}_0\in(T_0,n]$ independent of $n$ such that
		\begin{equation}
			\|\varepsilon_n(\widetilde{T}_0)\|_{H^1}^2\le\phi(\widetilde{\delta}\widetilde{T}_0)\le\frac{\eta}{16}.
			\label{small2}
		\end{equation}
		Then, we take $A_1$ large enough such that for $|x|\ge A_1$ and every $1\le k\le K$,
		\begin{equation*}
			\inf\limits_{n\ge1}|x-v_k\widetilde{T}_0-\alpha_{n,k}(\widetilde{T}_0)|\ge|x|-|v_k|\widetilde{T}_0-\sup\limits_{n\ge1,t\ge T_0}|\alpha_{n,k}(t)|\ge A_1-\frac{1}{2}A_1=\frac{1}{2}A_1,
		\end{equation*}
		and, via the exponential decay of the ground state in \eqref{1.7}, we may take $A_1$ larger such that
		\begin{equation}
			\sup\limits_{n\ge1}\int_{|x|\ge A_1}|\widetilde{R}_n(\widetilde{T}_0)|^2dx\le C\sum\limits_{k=1}^K\int_{|x|\ge\frac{A_1}{2w_k}}e^{-2\delta|x|}dx\le C\sum\limits_{k=1}^Ke^{-\frac{\delta A_1}{w_k}}\le\frac{\eta}{16},
			\label{small3}
		\end{equation}
		where $C$ depends on $A_1$, $w_k$ and $\delta$.
		
		Combining \eqref{small2} and \eqref{small3} we obtain
		\begin{equation}
			\sup\limits_{n\geq 1}
			\int_{|x|\ge A_1}|u_n(\widetilde{T}_0)|^2dx\le2\sup\limits_{n\ge1}\int_{|x|\ge A_1}|\widetilde{R}_n(\widetilde{T}_0)|^2dx
			+2\|\varepsilon_n(\widetilde{T}_0)\|^2_{H^1}\le\frac{\eta}{4}.
			\label{small4}
		\end{equation}
		
		Moreover, let $g(x)\in C^1(\mathbb{R})$ be such that $0\le g(x)\le1$,
		$g(x)=0$ for $|x|\le\frac{1}{2}$,
		$g(x)=1$ for $|x|\ge1$,
		and $|g'(x)|\le2$ for $x\in\mathbb{R}$.
		Let $g_{A(\eta)}:=g(\frac{|x|}{A(\eta)})$,
		where $A(\eta)$ is a constant to be determined later.
		
		By the integration-by-parts formula and the boundness of $B_*(t)$, $\|u_n\|_{H^1}$ in \eqref{3.7} 
		and \eqref{bound-un},
		there exists a positive constant $C_1$ such that for any $t\in[T_0,n]$,
		\begin{align*}
			\bigg|\frac{d}{dt}\!\int g_{A(\eta)}|u_n(t)|^2dx\bigg|\!=\!\bigg|2\text{Im}\!\int\!g'_{A(\eta)}(\partial_1u_n)\bar{u}_ndx+2\text{Re}\!\sum\limits_{l=1}^N\int_t^{+\infty}\!\!g_l(s)dB_l(s)\int\!g'_{A(\eta)}(\partial_1\phi_l)|u_n|^2dx\bigg|
			\le\frac{C_1}{A(\eta)}.
		\end{align*}
		This implies that 
		\begin{align}
			\int_{|x|\ge A(\eta)}|u_n(T_0)|^2dx\le&\int_{\mathbb{R}^d}|u_n(T_0)|^2g_{A(\eta)}dx\nonumber\\
			\le&\int_{\mathbb{R}^d}|u_n(\widetilde{T}_0)|^2g_{A(\eta)}dx+\int_{T_0}^{\widetilde{T}_0}\bigg|\frac{d}{dt}\int_{\mathbb{R}^d}|u_n(t)|^2g_{A(\eta)}dx\bigg|dt\nonumber\\
			\le&\int_{|x|\ge\frac{1}{2}A(\eta)}|u_n(\widetilde{T}_0)|^2dx+\frac{C_1}{A(\eta)}(\widetilde{T}_0-T_0).
			\label{small5}
		\end{align}
		
		Thus, setting $A(\eta)=\max\big\{A_0,~2A_1,~\frac{8C_1(\widetilde{T}_0-T_0)}{\eta}\big\}$ and combining \eqref{small1}, \eqref{small4} and \eqref{small5} together we obtain
		\begin{align}\label{<A eta}
			\int|u_n(T_0)-u_0|^2dx=&\int_{|x|\le A(\eta)}|u_n(T_0)-u_0|^2dx+\int_{|x|>A(\eta)}|u_n(T_0)-u_0|^2dx\nonumber\\
			\le&\int_{|x|\le A(\eta)}|u_n(T_0)-u_0|^2dx+2\int_{|x|>A(\eta)}|u_n(T_0)|^2dx+2\int_{|x|>A(\eta)}|u_0|^2dx\nonumber\\
			\le&\int_{|x|\le A(\eta)}|u_n(T_0)-u_0|^2dx+\eta.
		\end{align}
		By the compact embedding $H^1(B_{\mathbb{R}^d}(A(\eta)))\hookrightarrow L^2(B_{\mathbb{R}^d}(A(\eta)))$,
		$$
		\lim\limits_{n\to+\infty}\int_{|x|\le A(\eta)}|u_n(T_0)-u_0|^2dx=0,
		$$
		which along with \eqref{<A eta} yields that $\lim_{n\to+\infty}\int|u_n(T_0)-u_0|^2dx\le\eta$, 
		thereby proving \eqref{sL} 
		due to the arbitrariness of  $\eta>0$.

		\medskip 
		Now, we consider equation
		\begin{equation}
			\begin{cases}
				i\partial_tu+(\Delta+b_*\cdot\nabla+c_*)u+|u|^{p-1}u=0,\\
				u(T_0)=u_0.
			\end{cases}
			\label{m2}
		\end{equation}
		By \eqref{5.2} and \eqref{sL}, the standard well-posedness theory shows that there exists a unique $L^2$-solution $u$ to \eqref{m2} on $[T_0,+\infty)$,
		where $T_0$ is the universal time in Proposition \ref{prop5.4},
		such that
		\begin{equation}
			\lim_{n\to+\infty}\|u_n(t)-u(t)\|_{L^2}=0,
			\ \ \forall t\in [T_0, \infty).
			\label{m3}
		\end{equation}
		The preservation of $H^1$-regularity also yields $u(t)\in H^1$ for $t\in [T_0,+\infty)$.
		
		Moreover, by the uniform estimates in Theorem \ref{th5.1},
		for any $t\in [T_0, \infty)$
		and for $n$ large enough,
		\begin{align}\label{esti-un-R}
			\|u_n(t)-R(t)\|_{H^1}
			\leq C  t \phi^{\frac{1}{2}}(\wt \delta t).
		\end{align}
		Together with \eqref{m3},
		this yields that up to a subsequence (still denoted by \{n\} which may depend on $t$),
		\begin{equation}
			u_n(t)-R(t)\rightharpoonup u(t)-R(t)\ \ \text{weakly in} \ \ H^1,\ \text{as}\ n\to\infty.
		\end{equation}
		Letting $n\to+\infty$ in \eqref{esti-un-R} we thus obtain \eqref{1.32} and finish the proof of Theorem \ref{th1.6}.
	\end{proof}
	
	\begin{proof}[Proof of Theorem \ref{th1.3}]
		\eqref{1.20} follows directly from \eqref{1.28} and \eqref{1.32}. To prove \eqref{1.22}, it suffices to prove that
		\begin{equation}
			\|X(t)-e^{-W_*(t)}X(t)\|_{H^1}\le C\sum\limits_{k=1}^{N}\left(\int_{t}^{\infty}g_k^2ds\log\left(\int_{t}^{\infty}g_k^2ds\right)^{-1}
			\right)^{\frac{1}{2}}=:CL(t).
			\label{m4}
		\end{equation}
		
		To this end, using the inequality $|1-e^{ix}|\le2|x|$ for any $x\in\mathbb{R}$, the explicit formula \eqref{1.21} and the mass conservation of $X(t)$, we compute that
		\begin{align}
			\Vert X(t)-e^{-W_*(t)}X(t)\Vert_{H^1}
			\le&\Vert(1-e^{-W_*(t)})X(t)\Vert_{L^2}+\Vert\nabla(1-e^{-W_*(t)})X(t)\Vert_{L^2}+\Vert(1-e^{-W_*(t)})\nabla X(t)\Vert_{L^2}\nonumber\\
			\le&C\| W_*(t)\|_{W^{1,\infty}}(\|X(t)\|_{L^2}+\|\nabla X(t)\|_{L^2})\nonumber\\
			\le&CL(t)(1+\|\nabla X(t)\|_{L^2}),
			\label{m5}
		\end{align}
		where we also used the Levy H\"older continuity
		estimate of Brownian motions in the last step.
		
		Note that, by \eqref{1.28}, \eqref{1.32} and the mass conservation law of $X(t)$,
		\begin{align}
			\|\nabla X(t)\|_{L^2}\le&\|e^{W_*(t)}\nabla u(t)\|_{L^2}+\|\nabla W_*(t)X(t)\|_{L^2}\nonumber\\
			\le&\|u(t)-R(t)\|_{H^1}+\|R(t)\|_{H^1}+\|X(t)\|_{L^2}\le C,
			\label{m6}
		\end{align}
		where $C$ is independent of $t$,
		It follows that $\{X(t)\}$ is uniformly bounded in the energy space.
		
		Therefore,
		plugging \eqref{m6} into \eqref{m5} we obtain \eqref{m4} and finish the proof of Theorem \ref{th1.3}.
	\end{proof}

	\section*{Acknowledgments}
	Y. Sun and D. Zhang would like to thank Professor Yingchao Xie for many valuable discussions to improve this paper. 
    We gartefully acknowledge the funding by the Deutsche Forschungsgemeinschaft
      (DFG, German Research Foundation) – Project-ID 317210226 –
SFB 1283. 
    Y. Su is supported by NSFC grant (No. 12371122).
	D. Zhang is also grateful for the NSFC grants (No. 12271352, 12322108) and Shanghai Frontiers Science Center of Modern Analysis.

\end{document}